\newtheorem{thm}{Theorem}[section]
\newtheorem{prop}[thm]{Proposition}     
\newtheorem{lem}[thm]{Lemma}
\newtheorem{corollary}[thm]{Corollary}
\newtheorem{conjecture}[thm]{Conjecture}
\theoremstyle{definition}
\newtheorem{defn}{Definition}[thm]
\newtheorem{example}{Example}[thm] 
\newtheorem{rem}{Remark}[thm]
\DeclareFontFamily{OT1}{rsfs}{}
\DeclareFontShape{OT1}{rsfs}{n}{it}{<-> rsfs10}{}
\DeclareMathAlphabet{\curly}{OT1}{rsfs}{n}{it}
\newcommand{\Ab}{{\bf Ab}}   
\newcommand{\Sch}{{\bf Sch}} 
\renewcommand{\AA}{\mathbb{A}}  
\newcommand{\II}{\mathbb{I}} 
\newcommand{\ZZ}{\mathbb{Z}} 
\newcommand{\QQ}{\mathbb{Q}} 
\newcommand{\NN}{\mathbb{N}} 
\newcommand{\CC}{\mathbb{C}} 
\newcommand{\LL}{\mathbb{L}} 
\newcommand{\PP}{\mathbb{P}} 
\newcommand{\slot}{ \hspace{0.05in} {\rm \_} \hspace{0.05in} } 
\newcommand{\m}{\mathfrak{m}}         
\newcommand{\bR}{\operatorname{\bf R}}
\newcommand{\bL}{\operatorname{\bf L}}
\newcommand{\C}{\mathscr{C}} 
\renewcommand{\O}{\mathcal O} 
\newcommand{\ov}{\overline}
\newcommand{\Hom}{\operatorname{Hom}}   
\newcommand{\Ext}{\operatorname{Ext}}
\renewcommand{\H}{\operatorname{H}}
\newcommand{\R}{\operatorname{R}}
\newcommand{\Aut}{\operatorname{Aut}}
\newcommand{\Tor}{\curly Tor}             
\newcommand{\sHom}{\curly Hom}            
\newcommand{\sExt}{\curly Ext}            
\newcommand{\Supp}{\operatorname{Supp}}  
\newcommand{\gr}{\operatorname{gr}}
\newcommand{\Ker}{\operatorname{Ker}}
\newcommand{\Cok}{\operatorname{Cok}}
\newcommand{\Sym}{\operatorname{Sym}}    
\newcommand{\Id}{\operatorname{Id}}
\newcommand{\Spec}{\operatorname{Spec}}
\newcommand{\Quot}{\operatorname{Quot}}
\newcommand{\Hilb}{\operatorname{Hilb}}
\newcommand{\ch}{\operatorname{ch}}
\newcommand{\td}{\operatorname{td}}
\newcommand{\rk}{\operatorname{rk}}
\newcommand{\aj}{a}                     
\newcommand{\Gr}{\operatorname{Gr}}     
\newcommand{\Jac}{\operatorname{Pic}}
\begin{document}

\author{W.~D.~Gillam}
\address{Department of Mathematics, Brown University, Providence, RI 02912}
\email{wgillam@math.brown.edu}
\date{\today}
\title{Maximal subbundles, Quot schemes, \\ and curve counting}

\begin{abstract} Let $E$ be a rank $2$, degree $d$ vector bundle over a genus $g$ curve $C$.  The loci of stable pairs on $E$ in class $2[C]$ fixed by the scaling action are expressed as products of $\Quot$ schemes.  Using virtual localization, the stable pairs invariants of $E$ are related to the virtual intersection theory of $\Quot E$.  The latter theory is extensively discussed for an $E$ of arbitrary rank; the tautological ring of $\Quot E$ is defined and is computed on the locus parameterizing rank one subsheaves.  In case $E$ has rank $2$, $d$ and $g$ have opposite parity, and $E$ is sufficiently generic, it is known that $E$ has exactly $2^g$ line subbundles of maximal degree.  Doubling the zero section along such a subbundle gives a curve in the total space of $E$ in class $2[C]$.  We relate this count of maximal subbundles with stable pairs/Donaldson-Thomas theory on the total space of $E$.  This endows the residue invariants of $E$ with enumerative significance: they actually \emph{count} curves in $E$.  \end{abstract}

\maketitle

\section{Introduction} \label{section:introduction}

This note is concerned with the Gromov-Witten (GW), Donaldson-Thomas (DT), and Pandharipande-Thomas stable pairs (PT) residue invariants of the total space of a rank $2$ bundle $E$ over a smooth proper curve $C$ (see \S\ref{section:curvecounting} for a brief review).  The latter invariants are well-understood from a computational perspective.  In this paper I attempt to shed some light on the enumerative significance of such invariants and to explain the relationship between sheaf theoretic curve counting on $E$ and the virtual intersection theory of $\Quot$ schemes of symmetric products of $E$.  In particular, in \S\ref{section:maximalsubbundles}, I explain how to relate the ``count" of maximal subbundles of $E$ (which belongs properly to the theory of stable bundles on curves) with the DT/PT theory of $E$.  

Recall that the $\Quot$ scheme of a trivial rank $n$ bundle on a smooth proper curve $C$ may be viewed as a compactification of the space of maps from $C$ to a Grassmannian.  The relationship between the virtual intersection theory of this $\Quot$ scheme and various ``Gromov invariants" has been studied by many authors \cite{PR}, \cite{Ber}, \cite{BDW}, \cite{MO}, culminating in the theory of \emph{stable quotients} \cite{MOP} where the curve $C$ is also allowed to vary, as it is in GW theory.  The relationship between GW invariants of Grassmannians and counts of subbundles of maximal degree has also been studied by several authors \cite{Hol}, \cite{LN}, \cite{OT}.  These connections, however, are rather difficult to make, and one must take a circuitous route to link maximal subbundle counts to GW invariants.  Since counting maximal subbundles is an inherently sheaf-theoretic problem, it is reasonable to suspect that the most direct connections with curve counting should be made through the sheaf theoretic curve counting theories of Donaldson-Thomas \cite{Tho}, \cite{MNOP} and Pandharipande-Thomas \cite{PT}.

In general, the relationship between the PT theory of $E$ and virtual intersection theory on the various $\Quot \Sym^n E$ is quite subtle; we will treat the general case in a separate paper \cite{Gil2}.  In the present article, we will focus on the case of PT residue invariants in homology class $2[C]$ (twice the class of the zero section of $E$), where the most complete results can be obtained.  In particular, we will see that PT residue invariants of $E$ in class $2[C]$ are completely determined by virtual intersection numbers on $\Quot \O_C$ (symmetric products of $C$) and on the $\Quot$ scheme $\Quot^1 E$ parameterizing rank one subsheaves of $E$.  Our methods could even be used to compute the \emph{full} PT theory of $E$ (including descendent invariants involving odd cohomology classes from $C$) in class $2[C]$, though we do not provide the details here.

Once we have in hand the relationship between PT theory and virtual intersection theory of $\Quot$ schemes described above, we will be all the more interested in the latter.  In \S\ref{section:tautologicalclasses}, we suggest packaging this theory into a ``tautological ring."  In the course of proving the Vafa-Intriligator formula, Marian and Oprea \cite{MO} explained that this entire theory can be reduced, via virtual localization, to intersection theory on symmetric products of $C$, hence it can be treated as ``known."  On the other hand, it is quite painful in practice to write down manageable formulas for such invariants and it seemed to me to be overkill to appeal to the general results of \cite{MO} for the invariants actually needed in our study.  

Instead, we give (\S\ref{section:quotschemes}) a direct computation of the virtual intersection theory of the $\Quot$ scheme $\Quot^1 V$ parameterizing rank one subsheaves of a vector bundle $V$ on $C$ as follows.  The universal such subbundle $S$ is a line bundle on $\Quot^1 V \times C$, hence its dual yields a map $S^\lor : \Quot^1 V \to \Jac C$.  In case $V=\O_C$ this is the ``usual" map $\Sym^d C \to \Jac^d C$.  Just as in the case of the ``usual" map, the map $S^\lor$ is a projective space bundle when $S^\lor$ has sufficiently large degree $d$, and, as in the ``usual" case, one can compute the desired (virtual) intersection theory by descending induction on $d$.  The only new ingredient is the use of the virtual class; otherwise the computation is not significantly different from what MacDonald does in \cite{Mac}.

We include a review of the DT/GW/PT correspondence for the residue invariants of $E$ in \S\ref{section:correspondence}.  In principle, our computations could be used to verify this explicitly in degree $2[C]$, though we content ourselves with checking some special cases in \S\ref{section:computations}.

\subsection*{Acknowledgements} Much of this note is based on conversations with Matt Deland and Joe Ross which took place at Columbia in the spring of 2009.  I thank Ben Weiland for helpful discussions and Davesh Maulik for spurring my interest in PT theory.  This research was partially supported by an NSF Postdoctoral Fellowship.

\subsection*{Conventions} We work over the complex numbers throughout.  All schemes considered are disjoint unions of schemes of finite type over $\CC$.  We write $\Sch$ for the category of such schemes.  Set $T := \mathbb{G}_m = \Spec \CC[t,t^{-1}]$.  We will often consider an affine morphism $f : Y \to X$, in which case a $T$ action on $Y$ making $f$ equivariant for the trivial $T$ action on $X$ is a $\ZZ$-grading on $f_* \O_Y$ as an $\O_X$ algebra (so the corresponding direct sum decomposition is in the category of $\O_X$ modules).  A $T$ equivariant $\O_Y$ module is then the same thing as a graded $f_* \O_Y$ module.  Throughout, if $\pi : E \to X$ is a vector bundle on a scheme $X$, we use the same letter to denote its (locally free coherent) sheaf of sections, so $E = \Spec_X \Sym^* E^\lor$.  The sheaf $\Sym^* E^\lor$ has an obvious $\ZZ$-grading supported in nonnegative degrees; we call the corresponding $T$ action the \emph{scaling action}.  If $Z \subseteq E$ is a subscheme, then by abuse of notation, we will also use $\pi$ to denote the restriction of $\pi$ to $Z$.

When $T$ acts trivially on $X$, the $T$ equivariant cohomology $\H^*_T(X)$ is identified with $\H^*(X)[t]$; a $T$ equivariant vector bundle $V$ on $X$ decomposes into eigensubbundles $V=\oplus_n V_n$ where $T$ acts on $V_n$ through the composition of the character $\lambda \mapsto \lambda^n$ and the scaling action.  For $n \neq 0$, the $T$ equivariant Euler class $e_T(V_n)$ is invertible in the localized equivariant cohomology $\H^*_T(X)_t = \H^*(X)[t,t^{-1}]$.  The eigensubbundle decomposition induces an analogous decomposition $K_T(X)=\oplus_{\ZZ} K(X)$ of the $T$ equivariant vector bundle Grothendieck group of $X$ and $e_T$ descends to a group homomorphism \begin{eqnarray*} e_T : \oplus_{\ZZ \setminus \{ 0 \} } K(X) & \to & (\H^*_T(X)^\times, \cdot) \\ V-W & \mapsto & e_T(V)e_T(W)^{-1} \end{eqnarray*} which plays a fundamental role in (virtual) localization.

Tensor products are usually denoted by juxtaposition and exponents.  Notation for pullback of sheaves is often omitted, so $E$ sometimes means $f^* E$ for a morphism $f$ which should be clear from context.

\section{Quot schemes} \label{section:quotschemes} Let $C$ be a smooth projective curve, $E$ a vector bundle on $C$.  Recall \cite{Gro} that the \emph{quotient scheme} $\Quot E \in \Sch$ represents the presheaf on $\Sch$ which associates to $Y$ the set of exact sequences $$0 \to S \to \pi_2^*E \to Q \to 0$$ on $Y \times C$ with $Q$ (hence $S$) flat over $Y$ (up to isomorphism).  In particular, a point of $\Quot E$ is a SES $$0 \to S \to E \to Q \to 0$$ of sheaves on $C$.  The restriction maps for this presheaf along $f : Y \to Y'$ are defined by pulling back via $f \times \Id_C$.  The identity map of $\Quot E$ corresponds to an exact sequence $$0 \to S \to \pi_2^*E \to Q \to 0$$ on $\Quot E \times C$ called the \emph{universal exact sequence}.  Let $\Quot^{r,e} E$ denote the component of $\Quot E$ where $S$ has rank $r$ and degree $e$.  Note that $r,e$ determine the Hilbert polynomial of $Q$ via Riemann-Roch.  $\Quot^{r,e}$ is a projective scheme.  When $r=1$ we drop it from the notation and refer to $\Quot^e E$ as the \emph{rank one Quot scheme}.   This will be the main object of interest in the later sections of the paper.  A subsheaf $S$ of a vector bundle on a curve is torsion free since it is contained in a torsion free sheaf, hence it is locally free because $\O_C$ is a sheaf of PIDs.  By flatness, the universal $S$ is a locally free sheaf on $\Quot E \times C$.

\subsection{Abel-Jacobi maps}  \label{section:abeljacobimaps} Let $E$ be a vector bundle on a smooth proper curve $C$.  The determinant $\land^r S$ of the universal $S$ is an invertible sheaf on $\Quot^{r,e} E \times C$ and hence defines an Abel-Jacobi map $$\aj_e : \Quot^{r,e} E \to \Jac^e C$$ to the Picard scheme of degree $e$ line bundles on $C$.  Indeed, recall that $\Jac C = \coprod_e \Jac^e C$ represents the presheaf (of abelian groups under tensor product) \begin{eqnarray*} \Sch^{\rm op} & \to & \Ab \\ Y & \mapsto & \frac{ \{ \; {\rm line \; bundles \; on } \; Y \times C \; \} }{ \{ \; {\rm line \; bundles \; pulled \; back \; from} \; Y \; \} } . \end{eqnarray*} Fix a point $x \in C$.  As in \cite{ACGH} IV.2, we fix, once and for all, a \emph{Poincar\'e} line bundle $L$ on $\Jac C \times C$ such that the restriction of $L$ to $\Jac C \times \{ x \}$ is trivial.  The pair $(\Jac C, L)$ has the following universal property, which characterizes it up to tensoring $L$ with a line bundle pulled back from $\Jac C$: for any scheme $Y$ and any line bundle $L'$ on $Y \times C$, there is a unique morphism $f: Y \to \Jac C$ such that $L' = (f \times \Id_C)^* L \otimes \pi_1^* M$ for some line bundle $M$ on $Y$.  Let $L_e$ denote the restriction of the Poincar\'e bundle $L$ to $\Jac^e C$; by definition of $\Jac^e C$, $L_e|_{\{ P \} \times C}$ is a degree $e$ line bundle on $C$ for each point $P$ of $\Jac^e C$.  Each of the components $\Jac^e C$ is non-canonically isomorphic to the torus $$\H^1(C,\O_C)/\H^1(C,\ZZ) = \Ker ( c_1 : \H^1(C,\O_C^*) \to \H^2(C,\ZZ) ),$$ so its cohomology is an exterior algebra on $\H^1$: \begin{eqnarray*} \H^*(\Jac^e C,\ZZ) & = & \land^* \H^1(\Jac^e C,\ZZ) \\ & \cong & \land^* \ZZ^{2g}. \end{eqnarray*}

If $E=\O_C$, then $\aj_{-n}$ is just the usual Abel-Jacobi map \begin{eqnarray*}  \Sym^n C & \to & \Jac^n C \\ D  & \mapsto & \O(D),\end{eqnarray*} once we identify $\Sym^n C$ with $\Quot^{-n} E$ via $$D \mapsto  (\O_C(-D) \hookrightarrow \O_C).$$  The situation is very similar when $E$ is any invertible sheaf.   In the case of the \emph{rank one} Quot scheme $\Quot^e E$, the fiber of $\aj_e$ over a line bundle $L \in \Jac^e C$ is the projective space $$\PP \Hom(L,E)$$ because any nonzero map $L \to E$ must be injective.  Under the inclusion $$\PP \Hom(L,E) \hookrightarrow \Quot^e E,$$ the universal subbundle $S \to \pi_2^* E$ on $\Quot^e E \times C$ restricts to \begin{eqnarray*} \O(-1) \boxtimes L & \to & \pi_2^* E \\ f \otimes l & \mapsto & f(l) \end{eqnarray*} on $\PP \Hom(L,E) \times C$.

It is well-known that the Abel-Jacobi map $\aj_n : \Sym^n C \to \Jac^n C$ is a $\PP^{n-g}$ bundle for $n \geq 2g-2$.  In fact, a similar statement holds for $\Quot^e E$, as we now prove.

\begin{lem} For any coherent sheaf $E$ on a smooth proper curve $C$, there is an integer $d_0$ such that 
$\H^1(C, E \otimes L)=0$ for every line bundle $L$ on $C$ of degree at least $d_0$. \end{lem}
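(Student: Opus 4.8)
The plan is to argue by induction on the rank $r$ of $E$, using Serre duality in the base case and peeling off sub-line-bundles to lower the rank. First I would dispose of torsion. Since tensoring by a line bundle $L$ is exact and any torsion sheaf on $C$ has vanishing higher cohomology, the torsion subsheaf $T \subseteq E$ is harmless: from $0 \to T \to E \to E/T \to 0$, tensoring with $L$ and taking the long exact sequence gives an isomorphism $\H^1(C, E\otimes L) \cong \H^1(C,(E/T)\otimes L)$ (the terms $\H^1(C,T\otimes L)$ and $\H^2(C,T\otimes L)$ both vanish, the latter because $C$ is a curve). Hence in each inductive step we may replace $E$ by the locally free sheaf $E/T$ of the same rank, and we induct on $\rk E$ over all coherent sheaves on $C$.

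For $\rk E = 0$ there is nothing to prove ($\H^1$ already vanishes for every twist), so take $d_0 = 0$. For $\rk E = 1$, i.e.\ $E$ a line bundle, Serre duality gives $\H^1(C, E\otimes L) \cong \H^0(C,\omega_C \otimes E^\lor \otimes L^\lor)^\lor$, and the right-hand side vanishes once $\deg(\omega_C\otimes E^\lor\otimes L^\lor) = 2g-2-\deg E -\deg L$ is negative; thus $d_0 := 2g-1-\deg E$ works. For the inductive step, assume $\rk E = r \ge 2$ with $E$ locally free. Choose $m \gg 0$ so that $E\otimes\O_C(m\cdot x)$ is globally generated (Serre's theorem, $\O_C(x)$ being ample on $C$); since $E$ has positive rank this sheaf is nonzero, hence has a nonzero global section, which after untwisting is a nonzero map $\O_C(-m\cdot x) \to E$. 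As $E$ is locally free this map is injective, giving a short exact sequence $0 \to \O_C(-m\cdot x) \to E \to E'' \to 0$ with $E''$ coherent of rank $r-1$. Applying the induction hypothesis to $E''$ and the rank-one case to $\O_C(-m\cdot x)$, the long exact sequence obtained by tensoring with $L$ shows $\H^1(C,E\otimes L) = 0$ whenever $\deg L \ge \max\{\,d_0(\O_C(-m\cdot x)),\, d_0(E'')\,\} = \max\{\,2g-1+m,\, d_0(E'')\,\}$, which we take as $d_0(E)$.

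The one point that deserves care is that the conclusion must be \emph{uniform} over all line bundles $L$ with $\deg L \ge d_0$, not merely over the powers of a fixed ample bundle as in the usual formulation of Serre vanishing. This uniformity enters precisely in the rank-one base case, where the threshold $2g-1$ for the vanishing of $\H^1$ of a line bundle depends only on its degree; the short exact sequences then propagate a degree-only bound up the induction. The remaining ingredients — exactness of $(-)\otimes L$, vanishing of the higher cohomology of torsion sheaves and of $\H^2$ on a curve, and global generation after a sufficiently positive twist — are all standard, so I do not anticipate any real obstacle.
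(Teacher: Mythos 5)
Your proof is correct, but it takes a genuinely different route from the paper's. The paper's argument is a two-line compactness argument: for each fixed degree-zero line bundle $L_0$, Serre vanishing gives $\H^1(C,E\otimes L_0(n))=0$ for $n\gg 0$; semicontinuity extends this to a neighborhood of $L_0$ in $\Jac^0 C$; and quasi-compactness of $\Jac^0 C$ then yields a single $d_0$ working for all $L$ at once. Your argument instead inducts on the rank of $E$, stripping off torsion (harmless on a curve since torsion sheaves have no higher cohomology), handling the rank-one case by Serre duality with the explicit threshold $2g-1-\deg E$, and reducing rank $r$ to rank $r-1$ by exhibiting a sub-line-bundle $\O_C(-mx)\hookrightarrow E$ and running the long exact sequence. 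All the steps check out: the injectivity of a nonzero map from a line bundle into a locally free sheaf, the vanishing of $\H^2$ on a curve, and the fact that the rank-one threshold depends only on the degree (which, as you rightly emphasize, is exactly where the uniformity over all $L$ of a given degree enters). The trade-off is that the paper's proof is softer and shorter but relies on the Picard scheme and semicontinuity in families, whereas yours is more elementary and, unlike the compactness argument, comes close to producing an explicit $d_0$ (the only non-effective ingredient being the twist $m$ needed to produce a section, which could itself be bounded in terms of the Harder--Narasimhan data of $E$ if one wished).
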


\begin{proof} This is an easy consequence of semicontinuity and quasi-compactness of the Jacobian $\Jac^0 C$.  Fix a degree one (hence ample) invertible sheaf $\O_C(1)$ on $C$.  Then for any fixed degree zero line bundle $L_0$ on $C$, there is a $d_0$ such that $\H^1(C,E \otimes L_0(n))=0$ for all $n \geq d_0$.  By semicontinuity, $\H^1(C,E \otimes L(n))=0$ for all $L$ in a neighborhood of $L_0$ in $\Jac^0 C$, so by quasi-compactness of $\Jac^0 C$ we can choose $d_0$ large enough that $\H^1(C,E \otimes L(n))=0$ for all $n \geq d_0$ for all degree zero line bundles $L$.  As $L$ ranges over all degree zero line bundles, $L(n)$ ranges over all degree $n$ line bundles. \end{proof}

\begin{thm} \label{thm:quotforsmalle} Let $E$ be a rank $N$, degree $d$ vector bundle on a smooth proper curve $C$, $\Quot^e E$ the Quot scheme of rank one, degree $e$ subsheaves of $E$.  For all sufficiently negative $e$, the Abel-Jacobi map $\aj_e : \Quot^e E \to \Jac^e C$ is a $\PP^{m-1}$ bundle for $m = (1-g)N+d - eN$. \end{thm}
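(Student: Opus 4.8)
The plan is to compute the fibres of $\aj_e$ by Riemann--Roch together with the preceding Lemma, and then to promote that fibrewise statement to the structure of a projective bundle using cohomology and base change over $\Jac^e C$.

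First I would pin down the fibres. By the Lemma there is an integer $d_0$ with $\H^1(C, E \otimes M)=0$ whenever $\deg M \geq d_0$. Suppose $e \leq -d_0$. Then for every $L \in \Jac^e C$ the line bundle $L^\lor$ has degree $-e \geq d_0$, so $\H^1(C, E \otimes L^\lor)=\H^1(C,\sHom(L,E))=0$, and Riemann--Roch applied to the rank $N$, degree $d-eN$ bundle $E \otimes L^\lor$ gives
$$\dim_{\CC} \Hom(L,E) \;=\; \dim_{\CC}\H^0(C,E\otimes L^\lor) \;=\; \chi(C,E\otimes L^\lor) \;=\; (d-eN)+N(1-g) \;=\; m .$$
Since the fibre of $\aj_e$ over $L$ is $\PP\Hom(L,E)$, every fibre is a $\PP^{m-1}$. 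It remains to see $\aj_e$ is \emph{Zariski-locally} such a bundle.

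Next I would globalize. Write $p,q$ for the projections of $\Jac^e C \times C$ to its factors and recall the Poincar\'e bundle $L_e$ there; set $\F := \sHom(L_e, q^*E) = q^*E \otimes L_e^\lor$, a vector bundle on $\Jac^e C \times C$, flat over $\Jac^e C$, with $\F|_{\{L\}\times C}\cong E \otimes L^\lor$. The vanishing just established says $\H^1$ of every fibre of $\F$ over $\Jac^e C$ is zero, so cohomology and base change applies: $W := p_* \F$ is locally free of rank $m$ on $\Jac^e C$, $R^1 p_* \F = 0$, and the formation of $W$ commutes with arbitrary base change, with fibre over $L$ isomorphic to $\H^0(C,E\otimes L^\lor)$. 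Finally I would identify $\aj_e$ with $\rho : \PP W \to \Jac^e C$, where $\PP W$ denotes the bundle of one-dimensional subspaces of $W$, carrying the tautological inclusion $\O_{\PP W}(-1)\hookrightarrow \rho^* W$. In one direction: on $\PP W \times C$, adjunction applied to $\O_{\PP W}(-1)\hookrightarrow \rho^*W \cong p'_*\big((\rho\times\Id)^*\F\big)$ (the isomorphism being base change, $p'$ the projection $\PP W \times C \to \PP W$), followed by a twist by $(\rho\times\Id)^* L_e$, produces a map $\widetilde S \to \widetilde E$ from a line bundle $\widetilde S$ of fibrewise degree $e$ to the pullback $\widetilde E$ of $E$ which over the point $(L,[\phi])$ restricts to the nonzero homomorphism $\phi : L \to E$, hence is fibrewise injective; its cokernel is then $\PP W$-flat, so $0 \to \widetilde S \to \widetilde E \to \widetilde Q \to 0$ classifies a morphism $\Psi : \PP W \to \Quot^e E$, and since $\widetilde S$ differs from $(\rho\times\Id)^* L_e$ by a line bundle pulled back from $\PP W$ we get $\aj_e\circ\Psi = \rho$. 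In the other direction: on $\Quot^e E \times C$ the universal inclusion $S \hookrightarrow \pi_2^*E$ can, by the defining property of $L_e$, be written after twisting by $(\aj_e\times\Id)^* L_e^\lor$ as $\pi_1^* M \hookrightarrow (\aj_e\times\Id)^*\F$ for a line bundle $M$ on $\Quot^e E$; pushing forward by $\pi_1$ (using $\pi_{1*}\O = \O$ and base change for $\F$) yields $M \to \aj_e^* W$, which on each fibre is the inclusion of the line spanned by the corresponding nonzero homomorphism, hence a subbundle inclusion, and thus defines $\Phi : \Quot^e E \to \PP W$ with $\rho\circ\Phi = \aj_e$. A direct check with the universal properties of $\Quot^e E$, $\Jac^e C$, and $\PP W$ shows $\Phi$ and $\Psi$ are mutually inverse, identifying $\aj_e$ with the $\PP^{m-1}$ bundle $\rho$.

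I expect the only genuine input to be the $\H^1$-vanishing, which is exactly the preceding Lemma; the only real work is the bookkeeping in the last step, namely matching the two classifying morphisms and keeping the conventions for $\PP$ and for the Poincar\'e bundle straight (the fact that $L_e$ is well defined only up to a twist by a line bundle pulled back from $\Jac^e C$ is harmless, since such a twist does not change $\PP W$). Everything else is Riemann--Roch plus standard cohomology and base change, very much in the spirit of \cite{Mac}.
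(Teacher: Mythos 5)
Your proposal is correct and follows essentially the same route as the paper: the vanishing from the preceding Lemma plus cohomology and base change produces the rank $m$ bundle $W=\pi_{1*}(L_e^\lor\otimes\pi_2^*E)$ (the paper's $V$), and the two classifying morphisms you call $\Psi$ and $\Phi$ are exactly the paper's $f:\PP V\to \Quot^e E$ (via the evaluation map) and $g:\Quot^e E\to\PP V$ (via the subbundle $M\hookrightarrow \aj_e^*V$ obtained from the universal inclusion). The explicit fibrewise Riemann--Roch computation you lead with is a harmless addition; everything else matches the paper's argument.
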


\begin{proof} Let $L_e$ the universal degree $e$ line bundle on $\Jac^e E \times C$.  By flatness, cohomology and base change, the lemma, and Grauert's Criterion, $\R^1 \pi_{1*} (L_e^\lor \otimes \pi_2^* E)=0$ and $$V := \pi_{1*} (L_e^\lor \otimes \pi_2^* E)$$ is a vector bundle on $\Jac^e C$ of rank $m$ whenever $e$ is sufficiently negative.  Let $p : \PP V \to \Jac^e C$ be the projection, $i : \O_{\PP V}(-1) \to p^*V$ the tautological inclusion of bundles on $\PP V$.  The ``evaluation" map \begin{eqnarray*} e : \pi_1^* \O_{\PP V}(-1) \otimes (p \times \Id_C)^* L_e & \to & \pi_2^* E \\ f \otimes l & \mapsto & \iota f(l) \end{eqnarray*} is a map of bundles on $\PP V \times C$ with the correct discrete invariants, hence it defines a morphism $f : \PP V \to \Quot^e E$ with $(f \times \Id_C)^*j=e$, where $j : S \to \pi_2^*E$ is the universal subsheaf on $\Quot^e E \times C$.  I claim $f$ is an isomorphism.  

To construct its inverse, recall that, by the universal property of $\PP V$, lifting the Abel-Jacobi map $\aj_e : \Quot^e E \to \Jac^e E$ to a map $g : \Quot^e \to \PP V$ is the same thing as giving a rank one subbundle of the bundle $\aj_e^* V$ on $\Quot^e E$ (under the correspondence, this rank one subbundle will then be $g^*i$).  By flatness and cohomology and base change in the diagram $$\xymatrix@C+15pt{ \Quot^e E \times C \ar[r]^{\aj_e \times \Id_C} \ar[d]_{\pi_1} & \Jac^e C \times C \ar[d]^{\pi_1} \\ \Quot^e E \ar[r]^{\aj_e} & \Jac^e C, }$$ we have \begin{eqnarray*} \aj_e^* V & = & \aj_e^* \pi_{1*}(L_e^\lor \otimes \pi_2^* E) \\ &=& \pi_{1*}( (\aj_e \times \Id_C)^* (L_e^\lor \otimes \pi_2^*E)) .\end{eqnarray*}  By the universal property of $\Jac^e C$, there is a line bundle $M$ on $\Quot^e E$ such that \begin{eqnarray} \label{lb} (\aj_e \times \Id_C)^*L_e \otimes \pi_1^* M = S.\end{eqnarray}  We then have $$M^\lor \otimes \pi_{1*}((\aj_e \times \Id_C)^*L_e^\lor \otimes \pi_2^* E) = \pi_{1*}(S^\lor \otimes \pi_2^*E)$$ by the projection formula.  Tensoring the universal inclusion $j:S \hookrightarrow \pi_2^*E$ on $\Quot^e E \times C$ with $S^\lor$ and applying $\pi_{1*}$, yields a rank one subsheaf $$\O_{\Quot^e E} \hookrightarrow \pi_{1*}(S^\lor \otimes \pi_2^* E).$$  In fact it is a sub\emph{bundle}, because otherwise $\iota$ would fail to be injective at some point of $\Quot^e E$, which is absurd.  Tensoring with $M$, we obtain a rank one subbundle $$k := \pi_{1*}(j \otimes \Id_{S^\lor}) \otimes \Id_M : M \hookrightarrow a_e^* V,$$ and hence a morphism $g : \Quot^e E \to \PP V$ with the property $g^* i = k$.  

The proof is completed by checking that $fg$ is the identity of $\Quot^e E$ and $gf$ is the identity of $\PP V$, which is straightforward.  For example, checking that $fg=\Id$ amounts to showing that $(g \times \Id_C)^* j=j$.  We have \begin{eqnarray*} (g \times \Id_C)^* j &=& (g \times \Id_C)^*(f \times \Id_C)^* j \\ &=& (g \times \Id_C)^* e, \end{eqnarray*} and \begin{eqnarray*}  (g \times \Id_C)^* (\pi_1^* \O_{\PP V}(-1) \otimes (p \times \Id_C)^* L_e) &=& g^* \O_{\PP V}(-1) \otimes (a_e \times \Id_C)^* L_e \\ & = & \pi_1^*M \otimes (a_e \times \Id_C)^* L_e \\ & = & S.\end{eqnarray*}  Furthermore, if $s$ is a local section of $S$ corresponding to a local section $m \otimes l$ under the last isomorphism, then by definition of the evaluation morphism $e$, we compute \begin{eqnarray*} (g \times \Id_C)^*e(s) & = & (g^*i)(m)(l) \\ &=& k(m)(l) \\ &=& j(s), \end{eqnarray*} where the last equality is just unravelling the definition of $k$. \end{proof}

\begin{rem} Although the rank one Quot scheme $\Quot^e E$ is always smooth in genus zero (it is just the projective space $\PP \Hom(\O_{\PP^1}(e),E)$), it can be singular in general.  For example, if $C$ has genus $2$ and $E= \O_C \oplus \O_C$, then the fiber of the Abel-Jacobi map $$ a_{-2} : \Quot^{-2} E \to \Jac^{-2} C$$ over the dual $L^\lor$ of the $g^1_2$ on $C$ is $$\PP \Hom_C(L^\lor,E) = \PP( \H^0(C,L)^{\oplus 2}) \cong \PP^3$$ (the other fibers are $\PP^1$'s).   The singular locus of $\Quot^{-2} E$ is a quadric $\PP^1 \times \PP^1$ in this $\PP^3$ fiber corresponding to subsheaves $L^\lor \hookrightarrow E$ whose inclusion factors through an inclusion $\O_C \hookrightarrow E$.  The quotient of such a subsheaf is of the form $\O_C \oplus \O_{P+Q}$ and the tangent space at such a point is $4$ dimensional, with one dimensional obstruction space $$\Ext^1(L^\lor,\O_C \oplus \O_{P+Q}) = \H^1(C,L)$$ (the obstruction space is easily seen to vanish away from this locus). \end{rem}

\subsection{Stratification} \label{section:stratification}  Let $E$ be a vector bundle on a smooth proper curve $C$.  The $\Quot$ scheme $\Quot^{r,e} E$ can be stratified as follows.  Let $\Quot^{r,e}_0 E$ be the open subscheme of $\Quot^{r,e} E$ parameterizing sub\emph{bundles}.  Define a map \begin{eqnarray*} \iota_n : \Quot^{r,e+rn} E \times \Sym^n C & \to & \Quot^{r,e} E \\ (S \hookrightarrow E, D) & \mapsto & (S(-D) \hookrightarrow S \hookrightarrow E) \end{eqnarray*} (this obviously makes sense on the level of universal families).  Each map $\iota_n$ is a closed embedding.  We obtain a stratification $$\Quot^{r,e} E = \coprod_{n \geq 0} \iota_n[ \Quot^{r,e+rn}_0 E \times \Sym^n C]$$ by locally closed subschemes.  It is finite since $\Quot^{r,e+rn} E$ is empty for large $n$.  

In the rank one case, this stratification respects the Abel-Jacobi maps of \eqref{section:abeljacobimaps} in the sense that $a_e \iota_n = a_{e+n} \otimes a_{-n},$ where $\otimes$ is multiplication for the abelian variety $\Jac C$.  That is, the diagram $$\xymatrix{ \Quot^{e+n} E \times \Quot^{-n} \O_C \ar[d]_{a_{e+n} \times a_{-n}} \ar[r]^-{\iota_n} & \Quot^e E \ar[d]^{a_e} \\ \Jac^{e+n} C \times \Jac^{-n} C \ar[r]^-{\otimes} & \Jac^e C }$$ commutes.

\subsection{Obstruction theory}  \label{section:virtualclass} Let $E$ be a vector bundle on a smooth proper curve $C$ as usual and let $\pi: \Quot E \times C \to \Quot E$ be the projection.  The goal of this section is to show that there is a map \begin{eqnarray} \label{POTonQuot} \bR \sHom( \bR \pi_* \sHom(S,Q), \O_{\Quot E}) \to \LL_{\Quot E} \end{eqnarray} to the cotangent complex of $\Quot E$ (see \cite{Ill}) defining a perfect obstruction theory (POT) on $\Quot E$ in the sense of Behrend-Fantechi \cite{BF}.  Our proof basically follows the proof of Theorem~1 in \cite{MO}, but we work with the Behrend-Fantechi formalism as opposed to that of Li-Tian.\footnote{It is not obvious to me that anything in the work of \cite{MO} actually leads to a map like \eqref{POTonQuot}.}  Following \cite{MO}, we produce an embedding from $\Quot E$ into a smooth scheme\footnote{In fact, we will use the smooth locus of $\Quot E$, but the embedding will not be the identity map.} so that $\Quot E$ is realized as the zero section of a vector bundle with the expected rank.  We then simply identify the paradigm POT on this zero locus (whose virtual class is cap product with the Euler class of the bundle; see Section~6 in \cite{BF}) with the map \eqref{POTonQuot}.

Before carrying this out, I should mention that this is probably not ``the best" way to construct this POT.  A more intrinsic construction of this POT appears in my paper \cite{Gil1}.  The idea there is to use the ``reduced Atiyah class" of the universal quotient, which is a map $$S \to \pi_1^* \LL_{\Quot E} \otimes Q $$ in the derived category $D(\Quot E \times C)$.  Tensoring with the derived dual $Q^\lor$ of (the perfect complex) $Q$ and tracing, this yields a $D(\Quot E \times C)$ morphism $$Q^\lor \otimes^{\bL} S \to \pi_1^* \LL_{\Quot E}.$$ Using Serre duality for $\pi_1$, one can show that this is the same thing as a map \eqref{POTonQuot}.  It is shown in \cite{Gil1} that this is a POT.  It can be shown that the POT we will construct here coincides with the POT constructed in \cite{Gil1}, though we will omit this argument in the interest of brevity.  We have chosen to present this version of the construction of the POT to keep the paper self-contained.

Let $j : D \hookrightarrow C$ be an effective divisor in $C$ of degree $n$ and consider the embedding \begin{eqnarray*} \iota_D : \Quot^{r,e} E & \hookrightarrow & \Quot^{r,e-rn} E \\ (S \hookrightarrow E) & \mapsto & (S(-D) \hookrightarrow  E) \end{eqnarray*} obtained by restricting the map $\iota_n$ from the previous section to $$ \Quot^{r,e} E \times \{ D \} \cong \Quot^{r,e} E.$$  To save notation, set \begin{eqnarray*} Z & := & \Quot^{r,e} E \\ X & := & \Quot^{r,e-rn} E \end{eqnarray*} and write $0 \to \ov{S} \to \pi_2^* E \to \ov{Q} \to 0$ for the universal SES on $X \times C$.

By definition of $\iota_D$, we have a commutative diagram with exact rows $$\xymatrix{ 0 \ar[r] & S(-D) \ar@{^(->}[d] \ar[r] & \pi_2^* E \ar@{=}[d] \ar[r] & \ov{Q} \ar[d] \ar[r] & 0 \\ 0 \ar[r] & S \ar[r] & \pi_2^* E \ar[r] & Q \ar[r] & 0}$$ on $Z \times C$, where the top row is the pullback of the universal sequence on $X \times C$ via $\iota_D \times \Id_C$ (we should probably write $\ov{Q}|_{Z \times C}$ instead of $\ov{Q}$) and the bottom row is the universal sequence on $Z \times C$.  By abuse of notation, we also write $j$ for $j$ times the identity map of either $\Quot$ scheme, so the cokernel of the left vertical map can be written $j_* j^* S$.  By the Snake Lemma, the right vertical arrow fits into an exact sequence \begin{eqnarray} \label{quotientSES} 0 \to j_* j^* S \to \ov{Q} \to Q \to 0. \end{eqnarray}

\begin{lem} \label{lem:smoothness} For a divisor $D \subset C$ of sufficiently large degree $n$, the embedding $\iota_D$ factors through the smooth locus of $X = \Quot^{r,e-rn} E$. \end{lem}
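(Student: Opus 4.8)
The plan is to use the standard deformation-theoretic description of the tangent and obstruction spaces of $\Quot$ at a point $(\altS \hookrightarrow E)$, which are $\Hom_C(\altS,\ov{Q})$ and $\Ext^1_C(\altS,\ov{Q})$ respectively, and to show that for $n \gg 0$ the obstruction space vanishes at every point in the image of $\iota_D$, so that $\iota_D[Z] \subseteq X^{\rm sm}$ since vanishing obstruction at a point forces $X$ to be smooth there of the expected dimension. First I would fix the divisor $D$ of degree $n$ and recall from the Snake Lemma sequence \eqref{quotientSES} that the universal quotient $\ov{Q}$ on $X \times C$ restricted to $Z \times C$ sits in $0 \to j_* j^* S \to \ov{Q} \to Q \to 0$, where now $\ov{S} = S(-D)$; equivalently, at a closed point we have $0 \to S|_D \to \ov{Q} \to Q \to 0$ as sheaves on $C$. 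Applying $\Hom_C(\ov{S},-) = \Hom_C(S(-D),-)$ and taking the long exact sequence, it suffices to show that both $\Ext^1_C(S(-D), j_* j^* S)$ and $\Ext^1_C(S(-D), Q)$ vanish for $n$ large, uniformly in the point of $Z$.

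The first of these is easy: $j_* j^* S$ is a torsion sheaf supported on $D$, so $\Ext^1_C(S(-D), j_* j^* S) = \H^1(C, \sHom(S(-D), j_* j^* S))$ vanishes because the sheaf $\sHom$ is again torsion (supported on a finite set), hence has no higher cohomology. For the second term, $\Ext^1_C(S(-D), Q) = \H^1(C, Q \otimes S^\lor(D))$ since $S$ is locally free; here $Q$ is the universal quotient on the original $\Quot^{r,e}E$, which (together with $S^\lor$) varies in a bounded family as we move over $Z$, while $D$ can be chosen to have degree as large as we like. The key point is then exactly the kind of uniform Serre-vanishing statement proved in the Lemma preceding Theorem~\ref{thm:quotforsmalle}: for a bounded family of coherent sheaves on $C$ there is a single $n_0$ such that twisting by any line bundle of degree $\geq n_0$ kills $\H^1$. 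Concretely, $Q \otimes S^\lor(D)$ ranges, as the point of $Z$ varies and as we allow $D$ to move in its linear system, over a bounded family, and its "twist" $\O_C(D)$ has degree $n$; choosing $n$ large enough, uniformly over the (quasi-compact) base $Z$, makes $\H^1$ vanish. The precise bookkeeping is: apply the cited Lemma to the (finitely many, by noetherian induction / generic flatness) coherent sheaves appearing as fibers of $Q \otimes S^\lor$ over $Z$, or more cleanly apply cohomology-and-base-change to $\bR\pi_*$ of the universal $Q \otimes S^\lor \otimes \pi_2^*\O_C(D)$ on $Z \times C$.

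Once both $\Ext^1$ groups vanish at a closed point $z \in Z$, the obstruction space of $X$ at $\iota_D(z)$ is zero, so $X$ is smooth at that point (of dimension equal to $\dim \Hom_C$); since this holds for every $z$, the closed embedding $\iota_D$ lands in $X^{\rm sm}$. The main obstacle is purely one of \emph{uniformity}: I need the single integer $n$ to work simultaneously for all points of $Z = \Quot^{r,e}E$, not just pointwise. This is handled by quasi-compactness of $\Quot^{r,e}E$ exactly as in the earlier Lemma — either by semicontinuity over $Z$ combined with a finite cover, or by directly invoking Grauert/cohomology-and-base-change for the projection $\pi : Z \times C \to Z$ applied to the universal sheaf $\sHom(S, \ov{Q})$ twisted appropriately. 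No genuinely new idea is needed beyond the boundedness argument already used once in the excerpt; the only care required is to treat the torsion summand $j_*j^*S$ separately (where vanishing is automatic) from the locally free part (where one invokes Serre vanishing).
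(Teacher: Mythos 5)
Your proof is correct and follows essentially the same route as the paper's: both reduce to showing the obstruction space vanishes at every point of $\iota_D(Z)$ by splitting $\ov{Q}$ via the snake-lemma sequence $0 \to j_*j^*S \to \ov{Q} \to Q \to 0$, handling the torsion piece for free (it is supported in dimension zero) and the remaining piece by the uniform vanishing of $\H^1(C, \sHom(S,Q)|_z(D))$ obtained from semicontinuity and compactness of $\Quot^{r,e}E$. The only cosmetic difference is that the paper verifies formal smoothness in families, over an arbitrary affine square-zero extension via the Grothendieck spectral sequence, whereas you work pointwise and invoke the standard fact that a point of a Quot scheme with vanishing $\Ext^1(\ov{S},\ov{Q})$ is a smooth point; both are valid.
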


\begin{proof}  By compactness of $Z = \Quot^{r,e} E$ and semicontinuity, we can choose a $D$ with degree large enough that \begin{eqnarray} \label{firstvanishing} \H^1(C, \sHom(S,Q)|_z (D))=0 \end{eqnarray} for every point $z$ of $Z$. 

We will use the formal criterion for smoothness, so we must show that we can solve the lifting problem $$\xymatrix{ Y_0 \ar@{^(->}[d] \ar[r] & Z \ar@{^(->}[r]^{\iota_D} & X \ar[d] \\ Y \ar[rr] \ar@{.>}[rru] & & \Spec \CC }$$  whenever $Y_0 \hookrightarrow Y$ is a closed embedding of affine schemes with square zero ideal $I$.  By basic obstruction theory of quotients, it is equivalent to show that the obstruction to such a lift in $$\Ext^1_{Y_0 \times C}(\ov{S}, \ov{Q} \otimes \pi_1^* I)$$ vanishes, so it suffices to prove that this whole vector space is zero.  The functor $\Hom_{Y_0 \times C}$ is the composition of three functors: $$\Hom_{Y_0 \times C} = \Gamma \circ \pi_{1*} \circ \sHom_{Y_0 \times C},$$ so to prove the vanishing of $$\Ext^1_{Y_0 \times C}(\ov{S},\ov{Q} \otimes \pi_1^* I) = \R^1 \Hom_{Y_0 \times C}(\ov{S},\ov{Q} \otimes \pi_1^* I),$$ it suffices (by the Grothendieck spectral sequence) to prove the vanishing of the three vector spaces $$\begin{array}{l} \H^1(Y_0, \pi_{1*} \sHom(\ov{S},\ov{Q} \otimes \pi_1^* I)  \\ \H^0(Y_0, \pi_{1*} \sExt^1(\ov{S}, \ov{Q} \otimes \pi_1^* I)) \\ \H^0(Y_0, \R^1 \pi_{1*} \sHom(\ov{S},\ov{Q}) \otimes I)). \end{array}$$  The first vanishes because $Y_0$ is affine and the second vanishes because $\ov{S}$ is locally free. 

For the third vanishing, we will show that $\R^1 \pi_{1*} \sHom(\ov{S},\ov{Q})=0$.  By cohomology and base change and the assumption that $Y_0 \to X$ factors through $\iota_D$, we might as well assume $Y_0=X$, so $\ov{S}=S(-D)$.  Using the vanishing \eqref{firstvanishing} together with cohomology and base change (and flatness of $\sHom(S,Q)$), we first observe that $$\R^1 \pi_{1*} \sHom(\ov{S},Q) = \R^1 \pi_{1*} \sHom(S,Q)(D) = 0.$$  Next, we tensor the sequence $\eqref{quotientSES}$ with $S^\lor(D)$ and apply $\pi_{1*}$ to get an exact sequence $$ \cdots \to \R^1 \pi_{1*} j_*j^*(S \otimes S^\lor(D)) \to \R^1 \pi_{1*} \sHom(\ov{S},\ov{Q}) \to \R^1 \pi_{1*} \sHom(\ov{S},Q) \to 0.$$  The term on the left vanishes because $\pi_1 j$ has relative dimension zero, so we obtain the desired vanishing. \end{proof}

The embedding $\iota_D$ identifies $Z$ with the closed subscheme of $X$ defined by the degeneracy locus where the map $$\pi_{1*}  \ov{S}|_{X \times D} \to \pi_{1*} \pi_2^* E|_{X \times D}$$ of vector bundles on $X$ is zero.  Indeed, $\iota_D$ identifies the presheaf $\Quot^{r,e} E$ with the subpresheaf of $\Quot^{r,e-rn} E$ which associates to a scheme $Y$ the set of $$0 \to \ov{S} \to \pi_2^* E \to \ov{Q} \to 0$$ in $(\Quot^{r,e-rn} E)(Y)$ such that the inclusion $j : \ov{S} \to \pi_2^* E$ factors (necessarily uniquely) as below. \begin{eqnarray} \label{factorization} \xymatrix{ \ov{S} \ar@{^(->}[r]^-j \ar[rd] & \pi_2^* E \\ & \ov{S}(Y \times D) \ar@{.>}[u]} \end{eqnarray}  If $\Spec (A \otimes B) \hookrightarrow X \times C$ is a basic affine open on which $\ov{S}$ and $\pi_2^* E$ are trivialized and $D \subset C$ is given by $f \in B$, then $j$ can be viewed as a monomorphism $$j : (A \otimes B)^r \to (A \otimes B)^N$$ of free $(A \otimes B)$ modules.  For any $A$ algebra $\phi : A \to C$, the morphism $\Spec C \to X$ factors through $\iota_D$ iff there is a factorization \eqref{factorization} with $Y=\Spec C$ iff there is a factorization  $$  \xymatrix{ (C \otimes B)^r \ar@{^(->}[r]^-{\ov{j}} \ar[rd]_{(1 \otimes f) \cdot \Id_r} & (C \otimes B)^N \\ & (C \otimes B)^r \ar@{.>}[u]} $$ of $C \otimes B$ module maps, where $\ov{j} = (\Spec \phi \times \Id_B)^*j.$  On the other hand, such a factorization exists iff $$\ov{j}[(C \otimes B)^r] \subseteq (1 \otimes f) \cdot (C \otimes B)^N$$ iff $\ov{j} \otimes_{C \otimes B} C \otimes B/f=0$.  But \begin{eqnarray*}\ov{j} \otimes_{C \otimes B} C \otimes B/f &=& \ov{j}|_{\Spec C \times D} \\ &=& (\Spec \phi \times \Id_D)^*(j|_{X \times D}),\end{eqnarray*} which vanishes iff $\Spec \phi \times \Id_D$ factors through the zero locus of $j|_{X \times D}$ iff $\Spec C \to X$ factors through the zero locus of $\pi_{1*} (j|_{X \times D})$.

\begin{example} If $E$ has rank one and degree $d$, then for any $n \in \ZZ_{\geq 0}$, there is an the identification $\Sym^n C \cong \Quot^{d-n} E$ which identifies $S^\lor$ with $\O(D)$, where $D \subseteq \Sym^n C \times C$ is the universal divisor.  When $D=Q$ is a point of $C$, the embedding \begin{eqnarray*} \iota_Q : \Sym^{n-1} C & \hookrightarrow & \Sym^n C \\ P_1+ \cdots+P_{n-1} & \to & P_1+\cdots+P_{n-1}+Q \end{eqnarray*} identifies $\Sym^{n-1} C \subset \Sym^n C$ with the zero locus of a section of the restriction of $\O(D)$ to $\Sym^n C \times \{Q\} = \Sym^n C$. \end{example}

Of course, $\pi_2^* E|_{X \times D} = \pi_2^* E|_D$ is a trivial bundle of rank $N$ equal to the rank of $E$, so  $\pi_{1*} \pi_2^* E|_D$ is a trivial bundle of rank $nN$ ($\pi_{1*} \O_{X \times D}$ is a free $\O_X$ module of rank $n = \deg D$).  The degeneracy locus $Z$ may therefore be viewed as the zero locus of a section of the bundle $V := (\pi_{1*} \ov{S}^\lor|_{X \times D})^N$ on $X$.

In general, if $Z$ is the zero locus of a vector bundle $V$ on a smooth scheme $X$ (smooth in a neighborhood of $Z$ is good enough), then there is a map $$[V^\lor|_Z \to \Omega_X|_Z] \to \LL_Z$$ (the complex on the left is supported in degrees $-1,0$) defining a POT on $Z$ whose associated virtual class is just the Chow class $e(V) \cap [X]$ supported on $Z$.  This is a special case of the discussion in Section~6 of \cite{BF}.  In the situation of this section, we have:

\begin{lem} The complexes $[V^\lor|_Z \to \Omega_X|_Z]$ and $\bR \sHom( \bR \pi_{1*} S^\lor Q, \O_Z)$ on $Z$ are naturally quasi-isomorphic. \end{lem}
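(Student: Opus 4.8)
The plan is to identify both complexes with explicit two-term complexes of vector bundles on $Z$ and then match them termwise, using the geometric description of $Z$ as a zero locus inside $X$ that was just established. First I would unwind the right-hand side: since $S$ is locally free on $Z \times C$ and $Q$ has a finite locally free resolution (indeed $Q$ has a two-term resolution $0 \to S \to \pi_2^* E \to Q \to 0$), the complex $\bR\pi_{1*}(S^\lor Q) = \bR\pi_{1*}\sHom(S,Q)$ is a perfect complex on $Z$ with amplitude in $[0,1]$, so its derived dual $\bR\sHom(\bR\pi_{1*}(S^\lor Q),\O_Z)$ is perfect with amplitude in $[-1,0]$. The standard identification (as in \cite{MO}, \cite{BF}) represents this dual complex as $[T \to \Omega]$ where $T$ is the (Zariski) tangent sheaf contribution $\pi_{1*}\sHom(S,Q)^\lor$-type term and the degree $-1$ term records the obstructions $\R^1\pi_{1*}\sHom(S,Q)$; more precisely, on the smooth scheme $X$ one has a perfect obstruction theory for $\Quot$ realized by the same formula with $\ov S, \ov Q$, and since $X$ is smooth this obstruction theory is itself quasi-isomorphic to $\Omega_X$ (concentrated in degree $0$), because $\R^1\pi_{1*}\sHom(\ov S,\ov Q)=0$ along $Z$ by Lemma~\ref{lem:smoothness}.

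Next I would analyze the left-hand side. By the discussion preceding the lemma, $Z$ is cut out in $X$ (smooth near $Z$) as the zero locus of the tautological section of $V = (\pi_{1*}\ov S^\lor|_{X\times D})^{\oplus N} \cong \pi_{1*}\sHom(\ov S, \pi_2^* E)|_{X\times D}$, coming from the restriction to $X \times D$ of the universal inclusion $j : \ov S \hookrightarrow \pi_2^* E$. So the paradigm POT is $[V^\lor|_Z \to \Omega_X|_Z]$, where the map is the derivative of the defining section. The key computation is to reinterpret $V^\lor|_Z$ sheaf-theoretically: I claim $V|_Z \cong \pi_{1*}(\sHom(\ov S,\pi_2^*E)|_{Z\times D}) = \pi_{1*}(\sHom(S,\pi_2^*E)\otimes \O_D)$ after restricting along $\iota_D$ (using $\ov S|_{Z\times C}=S(-D)$ and $\sHom(S(-D),\pi_2^*E)|_D = \sHom(S,\pi_2^*E)|_D$). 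Applying $\pi_{1*}$ to the structure sequence $0 \to \sHom(S,\pi_2^*E)(-D) \to \sHom(S,\pi_2^*E) \to \sHom(S,\pi_2^*E)\otimes\O_D \to 0$ and using that $\pi_{1*}\sHom(S,\pi_2^*E)(-D)$ vanishes for $D$ large (by the Lemma before Theorem~\ref{thm:quotforsmalle}, applied on each fiber, plus cohomology and base change), I get $V|_Z \cong \R^1\pi_{1*}\sHom(S,\pi_2^*E)(-D)$. This is exactly the complex whose dual, via the long exact sequence of $\bR\pi_{1*}$ applied to $0 \to S \to \pi_2^*E \to Q \to 0$ twisted by $S^\lor(-D)$ — no, more simply: apply $\bR\pi_{1*}\sHom(S,-)$ to $0\to S\to \pi_2^*E\to Q\to 0$ to get a distinguished triangle relating $\bR\pi_{1*}\sHom(S,S)$, $\bR\pi_{1*}\sHom(S,\pi_2^*E)$, and $\bR\pi_{1*}\sHom(S,Q)$, which after accounting for the twist by $\O(-D)$ (equivalently $\Omega_X = $ tangent-obstruction of $X$) will express $[V^\lor|_Z \to \Omega_X|_Z]$ as $\bR\sHom(\bR\pi_{1*}\sHom(S,Q),\O_Z)$.

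Concretely, the clean route is: (1) Show $\Omega_X|_Z$, via the POT on the smooth $X$, is quasi-isomorphic to $\bR\sHom(\bR\pi_{1*}\sHom(\ov S, \ov Q)|_Z, \O_Z) = \bR\sHom(\bR\pi_{1*}\sHom(S(-D),\ov Q|_{Z\times C}),\O_Z)$. (2) Use the sequence $\eqref{quotientSES}$, i.e. $0 \to j_*j^*S \to \ov Q \to Q \to 0$ on $Z\times C$, twisted by $\sHom(S(-D),-)$: since $j_*j^*S$ is supported on $Z\times D$ where $\pi_1$ has relative dimension zero, $\bR\pi_{1*}\sHom(S(-D),j_*j^*S) = \pi_{1*}(\sHom(S(-D),S)\otimes\O_D) = \pi_{1*}(S^\lor S(D)|_D)$, a vector bundle in degree $0$; this is precisely $V^\lor|_Z$ up to the identification already made (its $\bR\sHom$-dual sits in degree $0$ as a bundle). (3) The resulting distinguished triangle $\bR\pi_{1*}\sHom(S(-D),j_*j^*S) \to \bR\pi_{1*}\sHom(S(-D),\ov Q) \to \bR\pi_{1*}\sHom(S(-D),Q)$ dualizes, and since $\sHom(S(-D),Q) = \sHom(S,Q)(D)$ has the same $\bR\pi_{1*}$ as $\sHom(S,Q)$ up to twist, one tracks through to the two-term complex $[V^\lor|_Z \to \Omega_X|_Z]$; matching this with $\bR\sHom(\bR\pi_{1*}\sHom(S,Q),\O_Z)$ requires untwisting by $\O(D)$, which is where I must be careful, because $\Omega_X$ and $\Omega_Z$ differ and the twist must be absorbed by the map $\iota_D^*\Omega_X \to \Omega_Z$. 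I expect the main obstacle to be precisely this bookkeeping of the twist by $\O_C(D)$ and verifying that the differential $V^\lor|_Z \to \Omega_X|_Z$ (the Jacobian of the degeneracy-locus section) agrees, under these identifications, with the connecting map in the dualized triangle — i.e. that the ``derivative of the section cutting out $Z$'' is the same as the natural map induced by $\eqref{quotientSES}$. This is a local-on-$X$ verification: trivializing $\ov S$, $\pi_2^*E$ and writing $D=(f)$ as in the paragraph before the lemma, both maps are computed from the same matrix $\ov j$ reduced modulo $f$, so they coincide; I would spell this out on a basic affine $\Spec(A\otimes B)$ and note it globalizes since all identifications used are canonical.
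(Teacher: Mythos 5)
Your general strategy---dualize to $[TX|_Z \to V|_Z]$, identify $TX$ with $\pi_{1*}\sHom(\ov S,\ov Q)$ using the vanishing of $\R^1\pi_{1*}\sHom(\ov S,\ov Q)$, and then use \eqref{quotientSES} to pass from $\ov Q$ to $Q$---is the right one, and is essentially the paper's. But the ``clean route'' as you set it up does not prove the lemma, and the step you defer as ``bookkeeping of the twist'' is in fact the entire content of the argument. Concretely: you apply $\sHom(\ov S,\slot)=\sHom(S(-D),\slot)$ to \eqref{quotientSES}. The resulting triangle has third term $\bR\pi_{1*}\sHom(\ov S,Q)=\bR\pi_{1*}(S^\lor Q(D))$, which is not $\bR\pi_{1*}(S^\lor Q)$; these differ in K-theory by $\pi_{1*}(S^\lor Q|_{Z\times D})$, of rank $r(N-r)n$, so no canonical identification conflates them. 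Correlated with this, your first term $\bR\pi_{1*}\sHom(\ov S,j_*j^*S)=\pi_{1*}(\ov S^\lor S|_{Z\times D})$ has rank $r^2n$, whereas $V|_Z=\pi_{1*}(\ov S^\lor\otimes\pi_2^*E|_{Z\times D})$ has rank $rnN$; your claim that the former ``is precisely $V^\lor|_Z$'' is false whenever $r<N$. (The two discrepancies cancel at the level of Euler characteristics, which is why the error is easy to miss.) The twist also cannot be ``absorbed by the map $\iota_D^*\Omega_X\to\Omega_Z$'': the complex in the lemma involves $\Omega_X|_Z$, not $\Omega_Z$, and that map plays no role. Your first, abandoned route has a similar defect: the long exact sequence you invoke yields a four-term sequence, not the isomorphism $V|_Z\cong\R^1\pi_{1*}\sHom(S,\pi_2^*E)(-D)$.

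The paper's resolution is to twist \eqref{quotientSES} by $S^\lor$ rather than by $\ov S^\lor$. This gives a quasi-isomorphism $[S^\lor S|_{Z\times D}\to S^\lor\ov Q]\simeq S^\lor Q$ with the correct third term, and the middle object $S^\lor\ov Q$ is then compared with $\ov S^\lor\ov Q$ by the four-term exact sequence $0\to S^\lor S|_{Z\times D}\to S^\lor\ov Q\to\ov S^\lor\ov Q\to\ov S^\lor\ov Q|_{Z\times D}\to 0$, which exhibits $[\ov S^\lor\ov Q\to\ov S^\lor\ov Q|_{Z\times D}]$ as quasi-isomorphic to the same thing. The exactness of that sequence is exactly where the definition of $Z$ as the degeneracy locus enters: from the vanishing of $\ov S|_{Z\times D}\to\pi_2^*E|_{Z\times D}$ on $Z$ one computes $\Tor_1^{Z\times C}(\O_{Z\times D},\ov Q)\cong\ov S|_{Z\times D}$ and $\ov Q|_{Z\times D}\cong\pi_2^*E|_{Z\times D}$; the latter is also what identifies $\pi_{1*}(\ov S^\lor\ov Q|_{Z\times D})$ with $V|_Z$. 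This $\Tor$ computation replaces, more robustly, the local matrix verification you propose at the end. If you reorganize around the $S^\lor$-twist and supply the $\Tor_1$ identification, your argument becomes the paper's proof.
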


\begin{proof} It is equivalent to prove that the dual complex $[TX|_Z \to V|_Z]$ is naturally quasi-isomorphic to $\bR \pi_{1*} S^\lor Q$.  By basic obstruction theory, we have an identification $TX = \pi_{1*} \ov{S}^\lor \ov{Q}$.  Following through this identification and the definition of the boundary map in the paradigm POT, we see that the boundary map $TX|_Z \to V$ is identified with $\pi_{1*}$ of the adjunction map $$\ov{S}^\lor \ov{Q} \to \ov{S}^\lor \ov{Q}|_{Z \times D}.$$

If we show that the vertical arrows in the natural diagram $$\xymatrix{ & \ov{S}^\lor \ov{Q} \ar[r] & \ov{S}^\lor \ov{Q}|_{Z \times D} \\ S^\lor S|_{Z \times D} \ar[r] & S^\lor \ov{Q} \ar[d] \ar[u] \\ & S^\lor Q  }$$ define quasi-isomorphisms between the complexes (on $Z \times C$) given by the rows, then the proof is completed by applying the functor $\bR \pi_{1*}$. To see that the bottom vertical arrow is a quasi-isomorphism, just tensor \eqref{quotientSES} with $S^\lor$. To say that the top vertical arrow is a quasi-isomorphism is equivalent to saying that the sequence \begin{eqnarray} \label{exactseq} 0 \to S^\lor S|_{Z \times D} \to S^\lor \ov{Q} \to \ov{S}^\lor \ov{Q} \to \ov{S}^\lor \ov{Q}|_{Z \times D} \to 0\end{eqnarray} is exact.  Tensoring the exact sequence $$0 \to S^\lor \to \ov{S}^\lor \to \ov{S}^\lor|_{Z \times D} \to 0 $$ with $\ov{Q}$ gives an exact sequence $$0 \to \Tor_1^{Z \times C}( \ov{S}^\lor |_{Z \times D} , \ov{Q}) \to S^\lor \ov{Q} \to \ov{S}^\lor \ov{Q} \to \ov{S}^\lor \ov{Q}|_{Z \times D} \to 0, $$ so we want to show that $S^\lor S|_{Z \times D}$ is identified with $\Tor_1^{Z \times C}( \ov{S}^\lor|_{Z \times D}, \ov{Q})$.  Tensoring the locally free resolution $$0 \to \ov{S} \to \pi_2^*E  \to \ov{Q} \to 0$$ (restricted from $X \times C$ to $Z \times C$) of $\ov{Q}$ with $\O_{Z \times D}$ gives an exact sequence $$0 \to \Tor_1^{Z \times C}(\O_{Z \times D}, \ov{Q}) \to \ov{S}|_{Z \times D} \to \pi_2^* E|_{Z \times D} \to \ov{Q}|_{Z \times D} \to 0 $$ of sheaves on $Z \times C$.  But by the definition of $Z$, the the map in the middle is zero, so the left and right maps are isomorphisms and we conclude $\Tor_1^{Z \times C}(\O_{Z \times D},\ov{Q})= \ov{S}|_{Z \times D}.$  Since $\ov{S}^\lor|_{Z \times D}$ is a locally free $\O_{Z \times D}$ module, we have \begin{eqnarray*} \Tor_1^{Z \times C}(\ov{S}^\lor|_{Z \times D}, \ov{Q}) & = & \Tor_1^{Z \times C}(\O_{Z \times D}, \ov{Q}) \otimes \ov{S}^\lor|_{Z \times D} \\ &=& \ov{S}^\lor \ov{S}|_{Z \times D} \\ & = & S^\lor S|_{Z \times D}. \end{eqnarray*} This proves that \eqref{exactseq} is exact, hence completes the proof.      \end{proof}

\begin{rem} One can show directly that this POT is independent of the choice of ``sufficiently positive" $j : D \hookrightarrow C$ by comparing two such $D_1,D_2$ with the inclusion of the sum $D_1+D_2$. \end{rem}

\subsection{Tautological classses}  \label{section:tautologicalclasses}  Let $\eta \in \H^2(C,\ZZ)$ be the fundamental class and fix a basis $\delta_1,\dots,\delta_{2g}$ for $\H^1(C,\ZZ)$ such that $\delta_i \delta_{g+i}=-\delta_{g+i}\delta_i=\eta$ for $i \in \{1,\dots,g \}$ and all other $\delta_i \delta_j$ are zero.    Let $S$ be the universal subbundle on $\Quot E \times C$.  Let $$c_i(S^\lor) = a_i \otimes 1 + \sum_{j=1}^{2g} b_{i,j} \otimes \delta_j + f_i \otimes \eta$$ be the K\"unneth decomposition of $c_i(S^\lor)$.  In the argot of \cite{Mar}, \cite{MO}, etc.\ the classes \begin{eqnarray*} a_i & \in & \H^{2i}(\Quot^{r,e} E) \\ b_{i,j} & \in & \H^{2i-1}(\Quot^{r,e} E) \\ f_i & \in & \H^{2i-2}(\Quot^{r,e} E)\end{eqnarray*} are called $a,b$ and $f$ \emph{classes}.   

\begin{defn} A \emph{tautological class} is an element of the subring $\ov{\R}^*$ of $\H^*(\Quot E)$ generated by the $a$, $b$, and $f$ classes.  The \emph{tautological ring} $\R^*$ is the quotient of $\ov{\R}^*$ by the ideal $$\{ \alpha \in \ov{\R}^* : \int_{[\Quot E]^{\rm vir}} \alpha \beta =0 \; {\rm for \;  all} \; \beta \in \ov{\R}^* \}.$$ \end{defn}

Unlike the actual cohomology ring $\H^*(\Quot E)$, the tautological ring is deformation invariant: it does not depend on the curve $C$ or the bundle $E$, except through their discrete invariants.  This follows from deformation-invariance properties of the virtual class and the fact that the $a$, $b$, and $f$ classes have globally defined analogues as $C,E$ vary in families which restrict to the given classes at points of the family.  The virtual intersection theory of $\Quot E$ is the study of the tautological ring, usually through calculations of integrals of tautological classes over the virtual class.  A typical result is the Vafa-Intriligator formula for integrals of polynomials in the $a$ classes \cite{MO} (for the case $E=\O_C^N$).

For later use, let us record some facts about the first Chern class $c_1(L_e) \in \H^2(\Jac^e C \times C)$ of the Poincar\'e bundle discussed on pages 334-335 of \cite{ACGH}.  Its K\"unneth decomposition looks like \begin{eqnarray} \label{c1ofL} c_1(L_e) &=& 0 \otimes 1 + \sum_{i=1}^{2g} b_i \otimes \delta_j + e \otimes \eta \end{eqnarray} (the $(2,0)$ K\"unneth component is zero because $L_e$ restricts trivially to $\Jac^e C \times \{ Q \}$).  Note that, by the universal property, any two components of $\Jac C$ are identified by tensoring $L$ with a line bundle pulled back from $C$; this only changes the $(0,2)$ K\"unneth component of $c_1$, so the $(1,1)$ K\"unneth components of $L_e$ and $L_{d}$ are identified under this isomorphism, so the $b_i$ are ``independent of $e$".  Recall that the $b_i$ form a basis for $\H^1(\Jac^e C, \ZZ) \cong \ZZ^{2g}$ which is oriented so that \begin{eqnarray} \label{topeven} \int_{\Jac^e C} b_1b_{g+1} b_2b_{g+2} \cdots b_g b_{2g} = 1.\end{eqnarray} 

To prove this (following \cite{ACGH}), one may assume $e$ is large.  Consider the divisor $\Delta + (e-1) C \times \{ Q \}$ in $C \times C$, where $\Delta$ is the diagonal.  By the universal property of $\Jac^e C$, the associated line bundle $$L' := \O_{C \times C}(\Delta+(e-1) C \times \{ Q \})$$ determines a morphism $f : C \to \Jac^e C$ so that $(f \times \Id_C)^*L_e$ differs from $L'$ only by tensoring with a line bundle pulled back from $C$ via $\pi_1$; this doesn't change the $(1,1)$ K\"unneth component of $c_1$, so we see that the $(1,1)$ K\"unneth component of $L_e$ pulls back via $f \times \Id_C$ to the $(1,1)$ K\"unneth component of $c_1(L')$, which is just the $(1,1)$ K\"unneth component of the diagonal $\Delta$: \begin{eqnarray*} \sum_{1=1}^{2g} f^* b_i \otimes \delta_i & = & \Delta_{1,1} \\ & = & \sum_{i=1}^g -\delta_{g+i} \otimes \delta_i + \sum_{i=1}^g \delta_i \otimes \delta_{g+i}. \end{eqnarray*}  To see that $f^*$ induces an isomorphism on $\H^1$, notice that it factors as a sequence of stratification maps (c.f.\ \eqref{section:stratification}, \eqref{section:virtualclass}) \begin{eqnarray*} \iota_Q : \Sym^n C & \to & \Sym^{n+1} C \\ P_1+\cdots+P_n & \mapsto & P_1+\cdots +P_n +Q \end{eqnarray*} (starting from $C=\Sym^1 C$) followed by the Abel-Jacobi map $\aj_e : \Sym^e C \to \Jac^e C$.  Since $e$ is large, $\aj_e$ is a $\PP^{e-g}$ bundle, hence $\aj_e^*$ is an isomorphism on $\H^1$, and each $\iota_Q$ has affine complement and is an isomorphism on $\H^1$ by the Lefschetz Hyperplane Theorem (c.f.\ (12.2) in \cite{Mac}).

Set $\theta := \sum_{i=1}^g b_i b_{g+1} \in \H^2(\Jac^e C)$, and let $\gamma = \sum_{i=1}^{2g} b_i \otimes \delta_i$ be the $(1,1)$ K\"unneth component of $c_1(L_e)$ as in \cite{ACGH}.  Then $\gamma^2 = - 2 \eta \theta$,  \begin{eqnarray} \label{thetatotheg} \int_{\Jac^e C} \theta^g = g! \int_{\Jac^e C} b_1b_{g+1}b_2b_{2+g} \cdots b_g b_{2g} = g!,\end{eqnarray}  and \begin{eqnarray} \label{chL} \ch L_e = 1+e \eta + \gamma-\eta \theta \end{eqnarray} (dropping notation for pullbacks).

Let us now focus on $c_1(S^\lor)=c_1(\land^r S^\lor)$, whose K\"unneth decomposition we will simply write as $$c_1(S^\lor) = a \otimes 1 - \sum_{i=1}^{2g} b_i \otimes \delta_i - e \otimes \eta$$ for the sake of this discussion.  As the notation intimates, $b_i$ is the pullback of $b_i \in \H^1(\Jac^e C)$ via the Abel-Jacobi map $\aj_e : \Quot^{r,e} E \to \Jac^e E$.  Indeed, by definition of $\aj_e$ (Section~\ref{section:abeljacobimaps}) via the universal property of $\Jac^e C$, $(\aj_e \times \Id_C)^*L_e$ is equal to $\land^r S$ up to tensoring with a line bundle pulled back from $\Quot^e E$; tensoring with such a line bundle does not effect the $(1,1)$ K\"unneth component of $c_1$.  Since $\H^*(\Jac C)$ is generated by $\H^1(\Jac C)$, all classes in $\H^*(\Quot E)$ pulled back via the Abel-Jacobi map are tautological.  We will usually drop $\aj_e^*$ from the notation, and simply write, e.g.\ $\theta \in \ov{\R}^2(\Quot^{r,e} E)$ for $\aj_e^* \theta$.  

The rest of this section is devoted to determining the tautological ring of the rank one Quot scheme.  The necessary intersection number formulae will be obtained as sequelae of the structure theory of the rank one Quot scheme from \eqref{section:abeljacobimaps} and the basic properties of the virtual class in \eqref{section:virtualclass}.  

A monomial in the $b_i$ equal to $b_{i_1}b_{g+i_1} \cdots b_{i_k}b_{g+i_k}$ for some distinct $i_1, \dots, i_k \in \{ 1, \dots, g \}$ in $\H^*(\Jac C)$ will be called \emph{even}, while a monomial in the $b_i$ not equal to (plus or minus) an even monomial will be called \emph{odd}.  The possible ``parities" of a (non-zero) product of monomials of given parity are: \begin{eqnarray*} ({\rm  odd }) ({\rm  even }) &=& {\rm \; odd \; } \\ ({\rm  even }) ({\rm  even }) &=& {\rm \; even  } \\ ({\rm  odd }) ({\rm  odd }) &=& {\rm \; odd \; or \; even } \end{eqnarray*}

\begin{thm} \label{thm:virtualintersectionnumbers} Let $E$ be a vector bundle of rank $N$ and degree $d$ on a smooth proper curve $C$.  Set $m := (1-g)N+d-Ne$ (as in Theorem~\ref{thm:quotforsmalle}).  For any $k \in \{0,\dots,g\}$ with $m-1+k \geq 0$, and any even monomial $b \in \H^{2g-2k}(\Quot^e E)$, $$\int_{[\Quot^e E]^{\rm vir}} a^{m-1+k} b = N^k,$$ while this integral vanishes for any odd $b$. \end{thm}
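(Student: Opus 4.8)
The strategy is to prove the formula by descending induction on $e$, using the structural results already established: the Abel-Jacobi map $\aj_e$ is a $\PP^{m-1}$ bundle for $e$ very negative (Theorem~\ref{thm:quotforsmalle}), and the stratification maps $\iota_n$ (Section~\ref{section:stratification}) together with the obstruction theory (Section~\ref{section:virtualclass}) let us compare the virtual class of $\Quot^e E$ with that of $\Quot^{e+n} E \times \Sym^n C$. First I would establish the \emph{base case}: when $e$ is sufficiently negative, $\Quot^e E = \PP V$ where $V$ is a rank $m$ bundle on $\Jac^e C$, the virtual class is the fundamental class, and $a = c_1(S^\lor) = \xi + \aj_e^*(\text{something})$ where $\xi = c_1(\O_{\PP V}(1))$. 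Pushing forward along $\aj_e$ with the Grothendieck relation and using the integrals \eqref{thetatotheg}, \eqref{topeven} on $\Jac^e C$ reduces $\int a^{m-1+k} b$ to $\int_{\Jac^e C} (\text{Segre class term}) \cdot b$. Since $b$ is a pullback from $\Jac^e C$ of a monomial in the $b_i$ of degree $2g - 2k$, and we need exactly $k$ more "$\theta$-like" factors to saturate the top class, the even monomial contributes $1$ (after accounting for $\theta^k/k!$-type bookkeeping) and the odd one contributes $0$ by \eqref{topeven}; the precise constant $N^k$ comes from the Chern/Segre classes of $V$, whose Chern character is computable from $\ch L_e$ via \eqref{chL} and Grothendieck-Riemann-Roch for $\pi_1$.

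\textbf{The inductive step.} For the induction I would use the closed embedding $\iota_n : \Quot^{e+n} E \times \Sym^n C \hookrightarrow \Quot^e E$ and, more usefully, the degree-$1$ version $\iota_Q$. The key point is that the obstruction theory constructed in Section~\ref{section:virtualclass} realizes $\Quot^e E$ (with its virtual class) as the zero locus of a section of an explicit bundle on the smooth locus of $\Quot^{e-rn} E$ (here $r = 1$), so virtual classes of consecutive Quot schemes are related by a Gysin/excess-intersection formula. Concretely, comparing the POT of $\Quot^e E$ with that of $\Quot^{e+1} E$ via the diagram relating $S$ and $\ov S = S(-Q)$, one gets a relation of the form: integrals over $[\Quot^e E]^{\mathrm{vir}}$ of tautological classes are expressible via integrals over $[\Quot^{e+1} E]^{\mathrm{vir}}$ and over $\Sym^n C$, with the Abel-Jacobi compatibility $\aj_e \iota_n = \aj_{e+n} \otimes \aj_{-n}$ controlling how the $b_i$ classes behave. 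One checks that under $\iota_n$ the class $a$ on $\Quot^e E$ restricts to $a$ on the $\Quot^{e+n}E$ factor plus a class pulled from $\Sym^n C$, and the dimension count $m(e) = m(e+n) + nN$ matches up so that the exponent $m-1+k$ on $\Quot^e E$ corresponds to $m-1+k$ on $\Quot^{e+n}E$ after absorbing the $\Sym^n C$ contribution (which integrates to $1$, exactly as in MacDonald's computation \cite{Mac}). This propagates the formula $\int a^{m-1+k} b = N^k$ upward and preserves the vanishing for odd $b$.

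\textbf{The main obstacle.} I expect the hard part to be making the inductive comparison of \emph{virtual} classes completely precise — i.e., verifying that the excess bundle appearing when $\Quot^e E$ sits inside the smooth locus of $\Quot^{e-rn} E$ as a zero locus interacts correctly with the tautological classes and does not contribute unwanted factors. This is where the extra care beyond MacDonald's classical argument is needed: in the classical case $\Sym^{n-1}C \subset \Sym^n C$ is a genuine smooth divisor, whereas here one must track the Euler class of $V = (\pi_{1*}\ov S^\lor|_{X \times D})^N$ and confirm, via the quasi-isomorphism of Lemma (the one identifying $[V^\lor|_Z \to \Omega_X|_Z]$ with $\bR\sHom(\bR\pi_{1*} S^\lor Q, \O_Z)$), that the induced virtual class is the expected one. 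A secondary technical point is bootstrapping the base case: one must choose $e$ negative enough that Theorem~\ref{thm:quotforsmalle} applies \emph{and} that $m - 1 + k \geq 0$ still has content, then check the Segre-class integral on $\Jac^e C$ yields precisely $N^k$ — this is a direct but slightly delicate Riemann-Roch computation using \eqref{chL}, where the rank-$N$ twisting of $L_e^\lor$ by $\pi_2^* E$ is what produces the $N^k$ rather than the $1^k$ of the classical ($N=1$) case. Once these two pieces are in place, the parity dichotomy (odd $\mapsto 0$, even $\mapsto N^k$) follows formally from \eqref{topeven} and the multiplicativity of parities recorded before the theorem statement.
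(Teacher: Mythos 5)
Your base case coincides with the paper's: for $e \ll 0$ one has $\Quot^e E = \PP V$ with $\ch V = m - N\theta$, the virtual class is the ordinary fundamental class, and the Grothendieck relation $a^m = a^{m-1}(N\theta) - a^{m-2}(N\theta)^2/2! + \cdots$ together with \eqref{topeven} yields $N^k$ for even $b$ and $0$ for odd $b$ by induction on $k$. The gap is in your reduction of general $e$ to this case. You propose a degree-by-degree descending induction using $\iota_Q : \Quot^{e+1}E \hookrightarrow \Quot^e E$ and an excess-intersection comparison of ``consecutive'' virtual classes. That step fails as stated: the zero-locus description of Section~\ref{section:virtualclass} realizes $\Quot^{e}E$ as the vanishing locus of a section of $V = (\pi_{1*}\ov{S}^\lor|_{X \times D})^N$ on $X = \Quot^{e-n}E$ only when $\deg D = n$ is large enough that $\iota_D$ factors through the \emph{smooth} locus of $X$ (Lemma~\ref{lem:smoothness}); for $n=1$ the ambient $\Quot^e E$ is in general singular, its virtual class is not its fundamental class, and the formula $[Z]^{\rm vir} = e(V)\cap[X]$ is simply unavailable. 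Making one-step comparisons rigorous would require a compatibility of the two perfect obstruction theories under $\iota_Q$ (a virtual pullback statement) that is established nowhere in the paper and is genuinely harder than what is needed.

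The paper avoids all of this with a single jump rather than an induction: pick one divisor $D$ of degree $n$ so large that $X = \Quot^{e-n}E$ is a projective bundle over $\Jac^{e-n}C$ (Theorem~\ref{thm:quotforsmalle}); then $[\Quot^e E]^{\rm vir} = c_n(\ov{S}_D^\lor)^N \cap [X] = \ov{a}^{nN}\cap [X]$, the class $a$ equals $\ov{a}|_{\Quot^e E}$ because twisting by $\pi_2^*\O_C(D)$ does not change the $(2,0)$ K\"unneth component, and $\ov{m} = m + nN$, so $\int_{[\Quot^e E]^{\rm vir}} a^{m-1+k}b = \int_X \ov{a}^{\ov{m}-1+k} b$, which is exactly the already-proved projective-bundle case. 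Note also that your inductive step imports a $\Sym^n C$ factor ``which integrates to $1$''; that factor belongs to the stratification maps $\iota_n$ of Section~\ref{section:stratification}, not to the fixed-divisor embedding $\iota_D$ used in the obstruction-theory construction, and no integral over $\Sym^n C$ occurs in the actual reduction.
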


\begin{proof} We first prove the theorem when $e$ is sufficiently small.  Let $\pi_i$ be the projections to the factors of $\Jac^e C \times C$.  Since $e$ is small, $\bR^1 \pi_{1*} (L_e^\lor \otimes \pi_2^* E)=0$, $V := \pi_{1*} (L_e^\lor \otimes \pi_2^* E)$ is a vector bundle of rank $m$ on $\Jac^e C$, and $\Quot^e E = \PP V $ (see the proof of Theorem~\ref{thm:quotforsmalle}).  By GRR and \eqref{c1ofL} we find $\ch V = m-N \theta$.  By the proof of Theorem~\ref{thm:quotforsmalle}, we have $S=\O_{\PP V}(-1) \otimes (p \times \Id_C)^* L_e$, where $p : \PP V \to \Jac^e C$ is the projection.  Since the $(2,0)$ K\"unneth component of $L_e$ is zero, $a=c_1(\O_{\PP V}(1))$.  The Chern character formula for $V$ implies $c(V)=e^{-N \theta}$ (see page 336 in \cite{ACGH}), so by definition of Chern classes, we have the relation \begin{eqnarray} \label{relation} a^m &=& a^{m-1}(N\theta)-a^{m-2}\frac{(N \theta)^2}{2!}+ a^{m-3}\frac{(N \theta)^3}{3!} - \cdots \end{eqnarray}  in $\H^*(\PP V)=\ov{R}^*(\Quot^e E)$.

The proof in the small $e$ case can now be completed by induction on $k$.  When $k=0$, we compute \begin{eqnarray*} \int_{[\Quot^e E]^{\rm vir}} a^{m-1}b & = & \int_{\PP V} c_1(\O_{\PP V}(1))^{\rk V -1} b \\ &=& \int_{\Jac^e C} b, \end{eqnarray*} which, according to \eqref{topeven}, is indeed given by $N^0=1$ when $b$ is the unique even monomial of degree $2g$ and certainly vanishes when $b$ is odd (hence zero).  For the induction step, since $k>0$, we can use \eqref{relation} to write \begin{eqnarray} \label{step1} \int_{\PP V} a^{m-1+k} b &=& \int_{\PP V} a^{m-1+(k-1)}(N\theta)b - a^{m-1+(k-2)} \frac{(N \theta)^2}{2!} b +  \cdots .\end{eqnarray} Note $$\theta^j = j! \sum_{1 \leq i_1 < \cdots < i_j \leq g} b_{i_1}b_{g+i_1} \cdots b_{i_j} b_{g+i_j}$$ is $j!$ times the sum of all even monomials of degree $2j$.  If $b$ is odd, then by the rules for the parity of a product, $\theta^j b$ is a sum of odd monomials, hence \eqref{step1} vanishes by the induction hypothesis.  On the other hand, if $b$ is an even monomial of degree $2g-2k$, then $(\theta^j / j!)b$ is a sum of $\begin{pmatrix} k \\ j \end{pmatrix}$ even monomials of degree $2g-2(k-j)$ and we have $$\int_{\PP V} a^{m-1+(k-j)} \frac{\theta^j}{j!}b = \begin{pmatrix} k \\ j \end{pmatrix} r^{k-j}$$ by the induction hypothesis, hence \begin{eqnarray*} \eqref{step1} & = & N^k \left ( \begin{pmatrix} k \\ 1 \end{pmatrix} - \begin{pmatrix} k \\ 2 \end{pmatrix} + \begin{pmatrix} k \\ 3 \end{pmatrix} - \cdots \right ) \\ &=& N^k, \end{eqnarray*} as claimed.

For a general $e$, choose a divisor $D \subset C$ of sufficiently large degree $n$ and consider the embedding $$\iota_D : \Quot^e E \hookrightarrow \Quot^{e-n} E $$ of Section~\ref{section:virtualclass}.  Following the notation of Section~\ref{section:virtualclass}, let $Z = \Quot^e E$, $X = \Quot^{e-n} E$, and let $S$, $\ov{S}$ be the universal subsheaves on $Z \times C$ and $X \times C$, respectively.  As in Section~\ref{section:virtualclass}, we have $S = \ov{S}(D)|_{Z \times C}$.  The twist by $\pi_2^* \O_C(D)$ doesn't change the $(2,0)$ K\"unneth component of $c_1$, so we have $a = \ov{a}|_Z$.  Let $\ov{S}_D^\lor := \pi_{1*} (\ov{S}^\lor|_{X \times D})$.  Evidently $\ov{S}_D^\lor$ is a vector bundle on $X$ with rank $n$ and $c_n( \ov{S}_D^\lor ) = \ov{a}^n$.  We saw in Section~\ref{section:virtualclass} that $$[Z]^{\rm vir} = c_n( \ov{S}_D^\lor )^r \cap [X] $$ (we are assuming that $n$ is large enough that $X$ is a $\PP^{\ov{m}-1}$ bundle over $\Jac^e C$, so in particular smooth of the expected dimension).  Note that \begin{eqnarray*} \ov{m} & = & (1-g)N+d-(e-n)N \\ & = & m+nN. \end{eqnarray*}  Since we know the desired formula holds on $X$, the proof is completed by the following computation: \begin{eqnarray*} \int_{[Z]^{\rm vir}} a^{m-1+i} b &=& \int_{X} \ov{a}^{m-1+i} b c_n( \ov{S}_D^\lor )^r \\ & = & \int_{X} \ov{a}^{\ov{m}-1+i} b. \end{eqnarray*} \end{proof}

The formula \begin{eqnarray} \label{powersofatheta} \int_{[\Quot^e E]^{\rm vir}} a^{m-1+k} \theta^{g-k} & = &  \frac{N^k g!}{k!} \end{eqnarray} is an immediate consequence.  The following corollary is also an important special case of the theorem.

\begin{corollary} \label{corollary:zerodimensionalvirtualclass} Let $E$ be a vector bundle of rank $N$ and degree $d$ on a smooth proper curve $C$ and suppose $e \in \ZZ$ satisfies $$(1-g)(N-1)+d-Ne=0$$ (i.e.\ the expected dimension of $\Quot^e E$ is zero).  Then $$\int_{[\Quot^e E]^{\rm vir}} 1 = N^g.$$\end{corollary}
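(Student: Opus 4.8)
The plan is to deduce this immediately from Theorem~\ref{thm:virtualintersectionnumbers} by specializing to $k=g$ and $b=1$. First I would make explicit how the integer $m=(1-g)N+d-Ne$ appearing in that theorem is related to the virtual dimension of $\Quot^e E$. For a point $[0\to S\to E\to Q\to 0]$ the virtual dimension equals $\chi(C,S^\lor\otimes Q)$; since $Q$ has rank $N-1$ and degree $d-e$, the bundle $S^\lor\otimes Q$ has rank $N-1$ and degree $d-Ne$, so Riemann--Roch gives
$$\dim^{\mathrm{vir}}\Quot^e E=(1-g)(N-1)+d-Ne=m-(1-g)=m-1+g.$$
Hence the hypothesis that $\Quot^e E$ have expected dimension zero, namely $(1-g)(N-1)+d-Ne=0$, is precisely the assertion $m-1+g=0$.

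Next I would invoke Theorem~\ref{thm:virtualintersectionnumbers} with $k=g$. This is permissible: $g\in\{0,\dots,g\}$, the constraint $m-1+k\ge 0$ holds (with equality, by the previous paragraph), and the even monomial $b=1$ (the empty product) lies in $\H^0(\Quot^e E)=\H^{2g-2g}(\Quot^e E)$. The theorem then yields
$$\int_{[\Quot^e E]^{\mathrm{vir}}} a^{m-1+g}\cdot 1 = N^g,$$
and since $m-1+g=0$ the integrand is simply $1$, giving $\int_{[\Quot^e E]^{\mathrm{vir}}}1=N^g$ as desired.

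There is essentially no obstacle here; all of the substance is contained in Theorem~\ref{thm:virtualintersectionnumbers}. The only points requiring (routine) care are the Riemann--Roch bookkeeping identifying the zero-virtual-dimension condition with $m-1+g=0$, and the observation that the empty product counts as an even monomial of the correct cohomological degree $2g-2g=0$. It is perhaps worth remarking, as a byproduct, that because $N^g\neq 0$ this computation forces $\Quot^e E$ to be nonempty whenever its expected dimension vanishes.
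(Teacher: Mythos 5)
Your proposal is correct and matches the paper's treatment: the corollary is stated there precisely as an immediate special case of Theorem~\ref{thm:virtualintersectionnumbers}, obtained by taking $k=g$ and $b=1$, exactly as you do. Your Riemann--Roch bookkeeping showing that the zero-expected-dimension hypothesis is equivalent to $m-1+g=0$ is the only computation needed, and it is carried out correctly.
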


Notice that the virtual intersection theory of $\Quot^e E$ has little dependence on the rank of $E$.  In case $N=1$, $\Quot^e E=\Sym^n C$ (for an appropriate $n$) and the tautological ring $R^*(\Quot^e E)$ is just the usual cohomology ring of the symmetric product.  Presumably the tautological ring of $\Quot^e E$ admits a presentation similar to that of $\H^*(\Sym^n C)$ given in \cite{Mac}.\footnote{Note the slight error pointed out by Bertram and Thaddeus (\cite{BT}, 2.2).}  We leave the details to the reader.

\begin{rem} In case $E = \O_C^N$ is the trivial rank $N$ bundle, our $\Quot^{r,-d} \O_C^N$ is denoted $\Quot_d \O_C^N$ in \cite{Ber} and \cite{MO}.  If $P(X_1,\dots,X_r)$ is a polynomial in $r$ variables, our $$\int_{[\Quot^{r,-d} \O_C^N]^{\rm vir}} P(a_1,\dots,a_r) $$ is Bertram's $$N_d(P(X_1,\dots,X_r),g).$$  In this case, our Theorem~\ref{thm:virtualintersectionnumbers} is a special case of Proposition~2 in \cite{MO}.  Indeed, fix $s \in \{0,\dots,g \}$ and indices $1 \leq j_1 \leq \cdots \leq j_s \leq g$, so $b := b_{j_1}b_{g+j_1} \cdots b_{j_s}b_{g+j_s} \in \H^{2s}(\Quot \O_C^N)$.  Let $m := N(1-g+d)$ as usual.  Using their notation, let $R(x)=P(x) = x^{m-1+g-s}$, $J(x) = N/x$.  Let $\zeta_N$ be a primitive $N^{\rm th}$ root of unity.  A very special case of their formula yields \begin{eqnarray*} \int_{ [\Quot^{-d} \O_C^N]^{\rm vir} } a^{m-1+g-s} b &=& N^{-s} \sum_{i=1}^N (\zeta_N^i)^s R(\zeta_N^i) J(\zeta_N^i)^{g-1} \\ &=& N^{-s} \sum_{i=1}^N N^{g-1}(\zeta_N^i)^{N(1-g+d)} \\ &=& N^{g-s},  \end{eqnarray*} in agreement with our formula.  \end{rem}

\section{Curve Counting} \label{section:curvecounting}  We begin this section with a brief review of various curve counting theories, then in \S\ref{section:residueinvariants} we explain how to define the ``residue" invariants of a rank two bundle $E$ on a curve $C$ for each of these theories.  Finally, in \S\ref{section:correspondence} we recall the (somewhat conjectural) correspondence between these residue theories.

\subsection{DT theory} \label{section:DTtheory} For a smooth $3$-fold $X$ and $\beta \in \H_2(X)$, let $I_n(X,\beta)$ denote the Hilbert scheme of ideal sheaves $I \subset \O_X$ of $1$-dimensional subschemes $Z \subset X$ with $[Z] = \beta$ and $\chi(\O_Z) = n$.  Viewing this as the moduli space of rank one torsion free sheaves with trivial determinant (and the appropriate discrete invariants), the fixed determinant deformation theory of sheaves endows this space with a perfect obstruction theory (POT) and hence a virtual fundamental class $$[I_n(X,\beta)]^{\rm vir} \in A_e(I_n(X,\beta)).$$  The tangent and obstruction spaces at a point $I \in I_n(X,\beta)$ are given by the traceless $\Ext$ groups $$\Ext^1(I,I)_0, \; \Ext^2(I,I)_0,$$ respectively and the \emph{expected dimension} $e$ is given by $$\dim \Ext^1(I,I)_0 - \dim \Ext^2(I,I)_0 = \int_\beta c_1(TX)$$ (\cite{MNOP2}, Lemma~1).  The \emph{Donaldson-Thomas (DT)} invariants of $X$ are defined by pairing various cohomology classes on $I_n(X,\beta)$ with the virtual class.

\subsection{PT theory} \label{section:PTtheory} Let $P_n(X,\beta)$ denote the moduli space of (flat families of) pairs $(F,s)$ consisting of a sheaf $F$ on $X$ and a section $s \in \H^0(X,F)$ satisfying: \begin{enumerate} \item The support of $F$ is one dimensional and $[\Supp F]=\beta$. \item $\chi(F)=n$ \item $F$ is pure: any subsheaf $G \subset F$ with zero dimensional support is zero. \item $\Cok s$ has zero dimensional support. \end{enumerate}  The purity of $F$ ensures that $\Supp F$ is a Cohen-Macaulay (CM) curve of pure dimension one (no embedded points).  This is the \emph{stable pairs} moduli space of Pandharipande-Thomas.  It also carries a POT and a virtual class $$[P_n(X,\beta)]^{\rm vir} \in A_e(P_n(X,\beta))$$ reflecting the fixed determinant deformation theory of the two term complex $$I^\bullet := [ s: \O_X \to F ]$$ (sitting in degrees $0,1$) in the derived category of $X$; the deformation and obstruction spaces at a point $I^\bullet$ are again given by traceless $\Ext$ groups: $$\Ext^1(I^\bullet,I^\bullet)_0, \, \Ext^2(I^\bullet,I^\bullet)_0.$$  The expected dimension is again given by $$\dim \Ext^1(I^\bullet,I^\bullet)_0 - \dim \Ext^2(I^\bullet,I^\bullet)_0 = \int_\beta c_1(TX)$$ (the proof is the same as that of Lemma~1 in \cite{MNOP2} and is hence omitted in \cite{PT}).

It is a standard fact (\cite{PT}, 1.7) that the cokernel of $s$ is the ideal sheaf $I_Z$ of $Z := \Supp F$ in $X$, so we have an exact sequence $$0 \to I_Z \to \O_X \to F \to \Cok s \to 0,$$ hence $\chi(F) = \chi(\O_Z)+\chi( \Cok s)$.  Since $\Cok s$ has zero dimensional support, the second term is nonnegative and is just the length of $\Cok s$.  The \emph{Pandharipande-Thomas (PT)} invariants of $X$ are also defined by pairing various cohomology classes on $P_n(X,\beta)$ with the virtual class.

\subsection{DT=PT for minimal $n$} \label{section:DTequalsPT} Fix $\beta$ and consider the minimum $n$ so that $P_n(X,\beta)$ is non-empty.  Then we have an isomorphism $P_n(X,\beta) = I_n(X,\beta)$ identifying the POTs on the two spaces.  This is because minimality of $n$ ensures that every stable pair is just the natural surjection $\O_X \to \O_Z$ onto the structure sheaf of a CM curve $Z \subset X$ of pure dimension one.  Similarly, minimality of $n$ ensures that every curve $Z \in I_n(X,\beta)$ is CM of pure dimension one, else we could pass to the subcurve $Z' \subseteq Z$ defined by the largest ideal $I \subset \O_Z$ with zero dimensional support to obtain a curve with smaller Euler characteristic.  The structure sheaf of such a CM curve is pure, hence it defines a stable pair $\O_X \to \O_Z$.  The POTs are identified since the complex $I^\bullet = [\O_X \to \O_Z]$ associated to a stable pair of minimal Euler characteristic is quasi-isomorphic to the ideal sheaf of $Z$.

\subsection{Residue invariants} \label{section:residueinvariants} Let $E$ be a rank two vector bundle over a smooth proper curve $C$.  The total space of $E$ is a smooth $3$-fold with $\H_2(E) = \ZZ[C]$ generated by the class of the zero section; we will often just write $b$ for $b[C]$.  The GW, DT, and PT ``residue" invariants of $E$ are defined by formally applying the virtual localization formula using the natural $T$ action on $E$ by scaling (which induces a $T$-action on each moduli space in question making the perfect obstruction theory $T$-equivariant).  For example, the GW invariants of $E$ are defined by $$M_{g,b}(E) := \int_{ [\ov{M}^\bullet_{g,0}(C,b)]^{\rm vir} } e_T( - \pi_! f^* E) ,$$ where $\pi$ is the universal domain curve and $f$ is the universal map.  The integral is equivariant pushforward to a point and takes values in the localized equivariant cohomology ring of a point (i.e.\ the ring of rational functions of the generator $t$ of the equivariant cohomology ring of a point).  The superscript $\bullet$ indicates that the domain curve $\Sigma$ of a stable map may be disconnected (note $g := 1 - \chi(\O_\Sigma)$ can be negative) though no connected component can be contracted to a point.  Note that the $T$-fixed stable maps are just the stable maps to the zero section and the $T$-fixed part of the GW POT is nothing but the POT on stable maps to the zero section.

Similarly, the PT invariants of $E$ are defined by $$P_{n,b}(E) := \int_{ [P_n(E,b)^T]^{\rm vir} } e_T(- N^{\rm vir} ),$$ where the integral is again equivariant pushforward to a point.  Here $[P_n(E,b)^T]^{\rm vir}$ is the virtual class associated to the POT on $P_n(E,b)^T$ obtained from the $T$ fixed part of the POT on $P_n(E,b)$ and $N^{\rm vir}$ is the moving part of this POT (we will examine this closely in Section~\ref{section:obstructiontheory}), viewed as an element of (perfect, $T$-equivariant) $K$ ring $K_T(P_n(E,b)^T)$.  The DT invariants $I_{n,b}(E)$ are defined similarly.  It is not obvious that these invariants have any enumerative meaning, though our goal is to show that they do.

\subsection{GW/DT/PT residue correspondence} \label{section:correspondence}  The GW, DT, and PT residue invariants are all $T$-equivariant-deformation invariant, so they are independent of the choice of degree $d$ bundle $E$, hence we may write $M_{g,b}(d), I_{n,b}(d), P_{n,b}(d)$ in lieu of $M_{g,b}(E), I_{n,b}(E), P_{n,b}(E)$.

Let \begin{eqnarray*} Z^{\rm GW}_b(d)(u) & := & \sum_g M_{g,b}(d) u^{2g-2} \in \QQ(t)((u)) \\ Z^{\rm DT}_b(d)(q) & := & \sum_n I_{n,b}(d) q^n \in \ZZ(t)((q)) \\ Z^{\rm PT}_b(d)(q) & := & \sum_n P_{n,b}(d) q^n \in \ZZ(t)((q)) \end{eqnarray*} be the generating functions for these invariants.  Let $$M(q) := \prod_{n=1}^\infty \frac{1}{(1-q^n)^n}$$ denote the MacMahon function (the generating function for plane partitions).  The GW/DT correspondence proved in \cite{OP} says that \begin{eqnarray} \label{ZDT0} Z^{\rm DT}_0(d) &=& M(-q)^{8g-8-d}\end{eqnarray} (this is a special case of Conjecture~1 in \cite{BP2}), that the \emph{reduced DT partition function} $Z^{\rm red \; DT}_b(d) := Z^{\rm DT}_b(d) / Z^{\rm DT}_0(d)$ is a rational function of $q$, and that \begin{eqnarray} \label{GWDT} (-iu)^{b(2-2g+d)}Z_b^{\rm GW}(d) &=& (-q)^{(-b/2)(2-2g+d)} Z^{\rm red \; DT}_b(d).\end{eqnarray}  (These are the ``absolute" special cases of Theorems~1,2, and 3 in \cite{OP}.)  

The conjectural DT/PT correspondence of \cite{PT} asserts (for any $3$-fold $X$) that $Z^{\rm PT}_\beta(X)=Z^{\rm red \; DT}_\beta(X)$ for any $\beta \in \H_2(X)$.  In particular, this would imply \begin{eqnarray} \label{GWPT} (-iu)^{b(2-2g+d)}Z_b^{\rm GW}(d) &=& (-q)^{(-b/2)(2-2g+d)} Z^{\rm PT}_b(d).\end{eqnarray}   In fact, it is expected that the same TQFT method used to compute the GW \cite{BP1}, \cite{BP2} and DT \cite{OP} invariants of $E$ can also be used to compute the PT invariants of $E$, thus establishing \eqref{GWPT}.

Let us now focus on the case $b=2$, which we will study throughout.  The explicit formula \begin{eqnarray} \label{ZGW2} Z^{\rm GW}_2(d) & = & (ut)^{4g-4-2d}4^{g-1}(2 \sin \frac{u}{2})^{2d} \left ( (1+\sin \frac{u}{2})^{d+1-g} + (1- \sin \frac{u}{2})^{d+1-g} \right ) \\ \nonumber & = & (ut)^{4g-4-2d}2^{2g-2}(2 \sin \frac{u}{2})^{2d}  \sum_{i} \begin{pmatrix} d+1-g \\ 2i \end{pmatrix} 2 (\sin \frac{u}{2})^{2i} \\ \nonumber &=& (ut)^{4g-4-2d}  \sum_{i} \begin{pmatrix} d+1-g \\ 2i \end{pmatrix} 2^{2g-1-2i} (2\sin \frac{u}{2})^{2i+2d} \end{eqnarray}  can be found in Section~8 of \cite{BP2}.\footnote{It is a matter of definitions that, for integers $k_1,k_2$ satisfying $d=k_1+k_2$, the partition function ${\rm GW}_b(g|k_1,k_2)$ defined in \cite{BP2} is related to our $Z^{\rm GW}_b(d)$ by $$Z^{\rm GW}_b(d) = u^{b(2g-2-d)} {\rm GW}_b(g|k_1,k_2)_{t=t_1=t_2}    .$$}  (The binomial coefficient should be defined as in \cite{ACGH} so that the usual binomial expansion holds for negative exponents.)  When $b=2$, \eqref{GWPT} specializes to \begin{eqnarray} \label{GWPTdegree2} u^{4-4g+2d} Z^{\rm GW}_2(d) &=& q^{2g-2-d} Z^{\rm PT}_2(d). \end{eqnarray} Under the change of variables $ -q = e^{iu}$, we have $(2 \sin u/2)^2=q^{-1}(1+q)^2$, so by combining \eqref{ZGW2} with \eqref{GWPTdegree2} we arrive at the (conjectural) formula \begin{eqnarray} \label{ZPT} Z^{\rm PT}_2(d)  &=& t^{4g-4-2d} \sum_i \begin{pmatrix} d+1-g \\ 2i \end{pmatrix} 2^{2g-1-2i} q^{2-2g-i}(1+q)^{2d+2i} \end{eqnarray} for the residue PT invariants in degree $2[C]$.

\section{Maximal Subbundles} \label{section:maximalsubbundles}

Let $E$ be a rank $2$, degree $d$ vector bundle over a genus $g$ curve $C$.  Define integers $\epsilon \in \{ 0,1 \}$ and $e$ by the formula $$g-1 + \epsilon = d-2e.$$  Let $f$ be the largest integer such that a generic (rank $2$ degree $d$) stable bundle contains a line subbundle of degree $f$.  We wish to argue by dimension counting that $f=e$.  Recall that the dimension of the moduli space of rank $2$ stable bundles with fixed determinant is $3g-3$ (\cite{NS}, Theorem~1(iv)).  By definition of $f$, a generic stable bundle $E$ fits into a SES $$0 \to S \to E \to Q \to 0,$$ so the dimension of the moduli of stable bundles is bounded above by $$\dim \Jac C + \dim \PP \Ext^1(Q,S)$$ since $Q$ is determined by $S$ because $\det E$ is fixed.  On the other hand, stability of $E$ implies $\Hom(Q,S)=0$, so the dimension of the $\Ext$ group can be read off from Riemann-Roch and we obtain an inequality $$3g-3 \leq g - (1-g+2f-d) -1 $$ implying $f \leq e$. For the other inequality $e \leq f$ it is enough to show that $\Hom(S,E) \neq 0$ for some $S$ of degree $e$, which will follow from the construction (below) of the locus of such $S$ as a determinantal locus in $\Jac^e C$ together with the fact that the class of expected dimension $0$ supported on this locus is nonzero in homology.  By similar dimension counting arguments, one can show that the dimension of $\Quot^e E$ is $\epsilon$ for generic $E$, and that $\Quot^e E$ is generically smooth (so it is smooth when $\epsilon=0$).  

In case $\epsilon=0$ and $E$ is sufficiently generic, the smooth space $\Quot^e E$ is just a finite number of points; the fact that this number is $2^g$ was apparently known to Segre (in some form) in 1889 \cite{Seg}.  A ``modern" proof via Grothendieck-Riemann-Roch and the Thom-Porteous Formula can be given fairly easily.  We sketch the argument (following the proof of Theorem~3.1 in \cite{LN}) since it is relevant to our discussion of the geometry of the $\Quot$ scheme.\footnote{All of these statements make sense when $C$ is $\PP^1$, as long as one understands ``generic" to mean ``the splitting type is as balanced as possible".  By semicontinuity, this is an open condition in families.} 

Choose a smooth canonical divisor $j : D \hookrightarrow C$ (so $D$ is just $2g-2$ points of $C$) corresponding to a section $\O_C \to \omega_C$ vanishing on $D$.  Then we have an exact sequence \begin{eqnarray} \label{sequence1} 0 \to \O_C \to \omega_C \to j_* j^* \omega_C \to 0\end{eqnarray} on $C$. Let $$\iota = (\Id \times j): \Jac^e C \times D \to \Jac^e C \times C$$ be the inclusion, $\pi_1, \pi_2$ the projections from $\Jac^e C \times C$. Tensor \eqref{sequence1} with $E$, pull back to $\Jac^e C \times C$ and tensor with the dual of the universal degree $e$ line bundle $L_e$ to get an exact sequence $$0 \to L_e^\lor \otimes \pi_2^*E \to L_e^\lor \otimes \pi_2^* (E \otimes \omega_C) \to \iota_* \iota^* L_e^\lor \otimes \iota_* \iota^* \pi_2^* (E \otimes \omega_C) \to 0.$$  To save notation, set \begin{eqnarray*} F & := & L_e^\lor \otimes \pi_2^* (E \otimes \omega_C) \\ G & := & \iota_* \iota^* L_e^\lor \otimes \iota_* \iota^* \pi_2^* (E \otimes \omega_C).\end{eqnarray*} For a sufficiently generic $E$ one can argue that $R^1 \pi_{1*} F=0$ and that the number of maximal subbundles is the degeneracy locus of the map $\pi_{1*}F \to \pi_{1*} G$ of vector bundles on $\Jac^e C$.  (We will see in a moment that the difference in ranks here is $$\ch_0(\pi_{1*}F-\pi_{1*}G) = g - 1 = \dim \Jac^e C-1,$$ so one expects this degeneracy locus to be zero dimensional.)  By the Thom-Porteous formula, if this degeneracy locus is smooth and zero dimensional (which it will be for generic $E$), then the number of points in it is $$\int_{\Jac^e C} c_g( \pi_{1*} G - \pi_{1*} F ).$$  

We adopt the notation of Section~\ref{section:quotschemes}.   The formula \eqref{c1ofL} for $c_1(L_e)$ implies that it is topologically trivial on $\Jac^e C \times D$.  Since $\iota^* L_e^\lor \otimes \iota^* \pi_2^* (E \otimes \omega_C)$ differs from $\iota^*L_e^\lor$ only by tensoring with a bundle pulled back from $D$ (hence trivial), it is also topologically trivial, hence so is $\pi_{1*} G$ and we have $\ch \pi_{1*}G = 4g-4.$  Using formula \eqref{chL}, we have \begin{eqnarray*} \ch F & = & (\ch L_e^\lor )( \pi_2^* \ch E )(\pi_2^* \ch \omega_C) \\ & = & (1-e \eta-\gamma - \eta \theta)(2+d\eta)(1+(2g-2)\eta) \\ & = & 2-2\gamma+(5g-5)\eta-2 \eta \theta, \end{eqnarray*} so by GRR we compute \begin{eqnarray*} \ch \pi_{1*}F & = & \ch \pi_{1!} F \\ & = & \pi_{1*}( (\ch F )(\td \pi_1)) \\ &=& \pi_{1*} ( (2-2\gamma+(5g-5)\eta-2 \eta \theta)(1+(1-g)\eta) )\\ & = & 5g-5-2 \theta. \end{eqnarray*}  Since $\ch(\pi_{1*}G-\pi_{1*}F) = 1-g + 2 \theta$, it follows from an exercise with symmetric functions (c.f.\ page 336 in \cite{ACGH}) that $c(\pi_{1*}G-\pi_{1*}F) = \exp (2 \theta)$, so finally we compute $$\int_{\Jac^e C} c_g( \pi_{1*}G- \pi_{1*}F) = \int_{\Jac^e C} \frac{(2 \theta)^g}{g!} = 2^g$$ using \eqref{thetatotheg}.

Let us see how to intepret the $2^g$ maximal subbundle count as a DT=PT invariant.  If we expand \eqref{ZPT} as a Laurent series in $q$, the smallest power of $q$ with nonzero coefficient occurs when $i$ takes the value $e$ determined by the equation $$g-1+\epsilon = d-2e$$ ($\epsilon \in \{0,1 \}$ as usual).  When $\epsilon=0$ (i.e.\ when $d$ and $g$ have opposite parity), this lowest order term is $$t^{4g-4-2d} 2^{3g-2-d} q^{2-2g-e}$$ so the DT=PT invariant in minimal Euler characteristic is given by \begin{eqnarray} \label{PT} P_{2-2g-e,2}(d) & = &   t^{4g-4-2d} 2^{3g-2-d}. \end{eqnarray}

The equality of DT and PT invariants in minimal Euler characteristic as discussed in \ref{section:DTequalsPT}, together with the known equivalence of DT and GW mentioned above, ensures that this is actually the correct PT invariant even though the coefficients of higher powers of $q$ are technically only conjectural.  

Assume we are in the $\epsilon=0$ case so $g-1 = d-2e$.  Let $Y$ be the space ($\Quot$ scheme) of such minimal (i.e.\ degree $e$) line subbundles of $E$.  As mentioned above, if $E$ is sufficiently generic, $Y$ is just a finite number of points.  On the other hand, we can identify $Y$ with the $T$-fixed subscheme of $$P := I_{2g-2-e}(E,2) = P_{2g-2-e}(E,2)$$ using Theorem~\ref{thm:Tfixedstablepairs} below (or by using Proposition~\ref{prop:CMcurvesinE} and the general remarks about this common moduli space in minimal Euler characteristic).  Since $Y$ is smooth and zero dimensional, the tangent space to $Y$ at a point $L \hookrightarrow E$ is zero.  On the other hand, this tangent space is given by $\Hom(L, Q) = \H^0(C,L^\lor Q),$ where $Q = E/L$ as usual.  The degree of $L^\lor Q$ is $-e+d-e = g-1$, so by Riemann-Roch, we also have $\H^1(C,L^\lor Q)=0$.  Let $Z = \Spec_C \O_C[L^\lor]$ be the degree $2[C]$ curve in $E$ corresponding to a point $L \in Y$.  The DT=PT deformation and obstruction spaces at this point are given by $$\H^0(Z,N_{Z/E}), \, \H^1(Z,N_{Z/E})$$ respectively.  This is a consequence of the results of Section~\ref{section:obstructiontheory}, as mentioned in Remark~\ref{rem:simplifiedvnb}.  In fact, the statement about the deformation space can be seen directly, since the moduli space in question is a Hilbert scheme.  The identification of the obstruction space can also be derived from Proposition~4.4 in \cite{PT} (or its proof).  The point is that the stable pairs in question are of the ideal form: they are structure sheaves of embedded CM curves.

We will see in the next section (Equation~\ref{normalbundleofZ}) that the normal bundle fits into a SES $$0 \to \pi^* L^{\otimes 2} \to N_{Z/E} \to \pi^*Q \to 0.$$ Since $\pi$ is an affine morphism, we can compute global sections (and the higher direct images of pushforward to a point) by first pushing forward to $C$ via $\pi$.  Note $\pi_* \pi^* G = G \oplus G L^\lor$ for any sheaf $G$ on $C$, so pushing forward this exact sequence to $C$ we have a SES $$0 \to L_t \oplus L_{2t}^2 \to \pi_* N_{Z/E} \to Q_t \oplus (QL^\lor)_0 \to 0$$ on $C$, where the subscripts indicate the weight of the natural $T$ action.  We've already observed that $$\H^0(C,QL^\lor)=\H^1(C,QL^\lor)=0,$$ so the fixed part of the $T$-equivariant POT on $Y$ is trivial, and the virtual fundamental class on $Y$ is its usual fundamental class.  We conclude that the entire POT is moving, so the virtual normal bundle (at a point $(L \hookrightarrow E) \in Y$) is then \begin{eqnarray*} N^{\rm vir}_{ \{ L \} / P} &=& \H^0(Z,N_{Z/E})-\H^1(Z,N_{Z/E}) \\ & = & \H^0(C,Q)_t + \H^0(C,L)_t + \H^0(C,L^2)_{2t} \\ & & -\H^1(C,Q)_t - \H^1(C,L)_t - \H^1(C,L^2)_{2t} \end{eqnarray*} (viewed as an element of the $T$ equivariant $K$ group of a point).  By Riemann-Roch we have \begin{eqnarray*} \chi(Q) & = & 1-g+d-i \\ \chi(L) & = & 1-g+e \\ \chi(L^2) & = & 1-g+2e, \end{eqnarray*} so we compute $$e_T(-N^{\rm vir}_{ \{ L \} / P})= t^{g-1+e-d}t^{g-1-e}(2t)^{g-1-2e} = 2^{2g-2-d}t^{3g-3-d-2e}.$$ Since the DT=PT invariant is given by \begin{eqnarray*} P_{2g-2-e,2}(E) & = & \sum_{L \in Y} e_T ( -N^{\rm vir}_{ \{ L \} / P } ) \\ &=& (\# Y) 2^{2g-2-d}t^{3g-3-d-2e} \\ & = & (\# Y) 2^{2g-2-d}t^{4g-4-2d},\end{eqnarray*} we see that $\#Y = 2^g$ by using the known value of this DT=PT invariant given in \eqref{PT}.

Notice that $P_{2g-2-e}(E,2)$ actually \emph{counts} the number of $T$-invariant CM curves in class $2[C]$ in a generic $E$.  To my knowledge, there is no such reasonable enumerative interpretation of the residue Gromov-Witten invariants.

Of course, if one wishes to compute this minimal Euler characteristic PT invariant without using deformation invariance and the choice of a generic $E$, then one simply replaces the $2^g$ maximal subbundle \emph{count} with the virtual class formula of Corollary~\ref{corollary:zerodimensionalvirtualclass} and the general formula \eqref{eTNvir} for $e_T(-N^{\rm vir})$ which we will derive later.

\section{$T$ fixed curves and pairs}

The main goal of this section is to identify the CM curves and stable pairs on $E$ fixed by the scaling action.  As mentioned in the Introduction, we will only treat the case of homology class $2[C] \in \H_2(E,\ZZ)$ here, leaving the general case for \cite{Gil2}.  Though it is not used elsewhere, we have also included a description of the fixed locus of the zero dimensional Hilbert scheme of the total space of a line bundle $L$ on $C$ (Theorem~\ref{thm:TfixedHilbertscheme}).

\begin{prop} \label{prop:CMcurvesinE} Let $\pi : E \to C$ be a vector bundle over a smooth curve.  Suppose $Z \subseteq E$ is a proper CM curve in $E$ invariant under the scaling action and in homology (or Chow) class $2[C]$.  Then $Z$ is the doubling of the zero section along a line  subbundle $L \subset E$. \end{prop}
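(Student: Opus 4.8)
The plan is to analyze the $\ZZ_{\geq 0}$-graded $\O_C$-algebra $A := (\pi|_Z)_* \O_Z$. First I would observe that $\pi|_Z : Z \to C$ is finite: it factors as the closed immersion $Z \hookrightarrow E$ followed by the affine morphism $\pi$, hence is affine, and it is proper since $Z$ is proper over $\CC$ and $C$ is separated; an affine proper morphism is finite. Thus $A$ is a coherent sheaf of $\O_C$-algebras. Because $Z$ is invariant under the scaling action, its ideal sheaf in $\Sym^* E^\lor$ is homogeneous, so $A = \bigoplus_{n \geq 0} A_n$ is a graded quotient of $\Sym^* E^\lor$; in particular each $A_n$ is an $\O_C$-module quotient of $\Sym^n E^\lor$, and $A$ is generated as an algebra over its degree-zero part $A_0$ by $A_1$.

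Next I would pin down the numerics. Since $\pi$ induces an isomorphism $\H_2(E) \cong \H_2(C)$ carrying $[Z] = 2[C]$ to $(\rk_{\O_C} A)\,[C]$, the $\O_C$-module $A$ has generic rank $2$. Moreover $\pi|_Z$ is surjective: otherwise $Z$ would be supported over a proper, hence finite, closed subset of $C$, so some one-dimensional component of the proper curve $Z$ would be a positive-dimensional proper closed subscheme of a single fibre $\AA^N$ of $\pi$, which is absurd. Surjectivity of $\pi|_Z$, together with reducedness of $C$, forces the degree-zero part of the ideal of $Z$ to vanish, i.e.\ $A_0 = \O_C$.

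Then comes the Cohen--Macaulay hypothesis, which I expect to be the crux. Since $Z$ is Cohen--Macaulay of pure dimension one it has no embedded points, so $\O_Z$ has no nonzero subsheaf of zero-dimensional support; because $\pi|_Z$ is finite this is equivalent to $A$ having no nonzero $\O_C$-torsion, and on the smooth curve $C$ this makes $A$ locally free of rank $2$. Each graded piece $A_n$ is then torsion-free, being an $\O_C$-submodule of $A$, so every $A_n$ of generic rank $0$ vanishes; since $A_0 = \O_C$ already accounts for one unit of rank, exactly one $A_{n_0}$ with $n_0 \geq 1$ is nonzero, and it is a line bundle. Generation in degree $1$ forces $n_0 = 1$: if $A_1 = 0$ then all higher $A_n$ vanish as well. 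Hence $A = \O_C \oplus L^\lor$ with $L^\lor := A_1$ a line bundle satisfying $L^\lor \cdot L^\lor \subseteq A_2 = 0$; that is, $A = \O_C[L^\lor]$ is a square-zero extension.

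Finally I would exhibit the subbundle and identify the doubling. The degree-one component of the graded surjection $\Sym^* E^\lor \twoheadrightarrow A$ is a surjection $E^\lor \twoheadrightarrow L^\lor$ of $\O_C$-modules onto a line bundle; its kernel is locally a direct summand, so dualizing the sequence $0 \to \Ker(E^\lor \to L^\lor) \to E^\lor \to L^\lor \to 0$ produces a subbundle $L := (L^\lor)^\lor \hookrightarrow E$. By construction the homogeneous ideal of $Z$ in $\Sym^* E^\lor$ is generated by $\Ker(E^\lor \to L^\lor)$ in degree one together with all of $\Sym^{\geq 2} E^\lor$, which is exactly the ideal of the first infinitesimal neighbourhood of the zero section inside the total space of the line subbundle $L \subset E$. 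Thus $Z = \Spec_C \O_C[L^\lor]$ is the doubling of the zero section along $L$, as asserted. Apart from the Cohen--Macaulay step and its translation into torsion-freeness of $\pi_* \O_Z$, everything above is bookkeeping with the grading and a rank count, so that is the step I expect to be the only real obstacle.
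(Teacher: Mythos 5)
Your proposal is correct and follows essentially the same route as the paper: push $\O_Z$ forward to a graded $\O_C$-algebra quotient of $\Sym^* E^\lor$, use the Cohen--Macaulay (purity) hypothesis to make the graded pieces locally free, use the class $2[C]$ to get generic rank $2$, and use generation in degree one to conclude $A = \O_C \oplus L^\lor$ with $L^\lor$ a line bundle and square-zero multiplication. The only (harmless) difference is that you derive finiteness of $\pi|_Z$ directly from ``affine $+$ proper'' rather than first observing that a proper $T$-invariant curve must be topologically supported on the zero section.
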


\begin{proof}  Certainly such a $Z$ must be supported on the zero section, so $\O_Z = \pi_* \O_Z$ is a sheaf of $\O_C$-algebras.  Since $Z$ is $T$-invariant, $\O_Z$ has a grading making $\Sym^* E^\lor \to \O_Z$ a surjection of graded $\O_C$-algebras.  Since $Z$ is CM, each graded piece of $\O_Z$ must be a locally free $\O_C$-module, else its torsion submodule violates purity.  Since $\O_Z$ has length $2$ at the generic point of $C$, $\O_Z$ is nonzero only in grading zero (where it must be $\O_C$ since $\O_C = \Sym^0 E^\lor$ surjects onto it) and one other grading.  Since $\Sym^* E^\lor$ is generated in grading $1$, this other grading must be $1$ and we have a surjection $E^\lor \to (\O_Z)_1$ in grading $1$.  The locally free sheaf sheaf $L := (\O_Z)_1^\lor \subset E$ must be a line bundle because of the homology class requirement.  Evidently $\O_Z = \O_C[L^\lor]$ is the trivial square zero extension of $\O_C$ by $L^\lor$. \end{proof}

\begin{rem} Note that the $T$-fixed subscheme of such a $Z$ is nothing but the zero section $C$.  The scheme $Z$ is l.c.i. and hence $Z$ is a Cartier divisor when $E$ has rank $2$.  The map $\pi : Z \to C$ is flat and finite of degree $2$.  We can compute $\chi(\O_Z)$ by first pushing forward to $C$: $$\chi(\O_Z) = \chi(\O_C \oplus L^\lor) = 2g-2-\deg L.$$ Since $Z$ is the first infinitesimal neighborhood of $C$ in $L$, we also have a closed embedding $\iota : C \to Z$ corresponding to the surjection $\O_C[L^\lor] \to \O_C$ with kernel $L^\lor$.  Evidently $\pi \iota = \Id_C$.  We find that \begin{eqnarray*} \iota^* \pi^* &=& \Id \\ \pi_* \iota_* &=& \Id \\ \pi_* \pi^* G &=& G \oplus L^\lor G. \end{eqnarray*} \end{rem}

We now assume $E$ has rank $2$.

\begin{rem} \label{rem:otherTfixedcurves} One can show similarly that such a $Z$ in class $3[C]$ is either $\Spec \O_C[E^\lor]$ (the first infinitesimal neighborhood of the zero section in $E$) or the second infinitesimal neighborhood $$\Spec_C ( \O_C \oplus L^\lor \oplus L^{\lor \otimes 2} )$$ of the zero section in a line subbundle $L \subset E$.  The latter is l.c.i.\ in $E$ while the former is not even Gorenstein.  Which of the two has smaller Euler characteristic depends on the relationship between $d$ and $g$.  One can again describe and filter the normal bundle as in the above proof.  \end{rem}

Let $Z := \Spec \O_C[L^\lor]$ be the doubling of $C$ along $L \hookrightarrow E$ as in the proposition, and let $I$ be the ideal sheaf of $Z$ in $E$ and $Q:=E/L$ the locally free quotient.  Then we have a short exact sequence \begin{eqnarray} \label{normalbundle} 0 \to \pi^* Q^\lor \to I/I^2 \to \pi^* L^{\lor 2} \to 0 \end{eqnarray} of sheaves on $Z$.  To see this, note that $I$ is defined by the SES $$0 \to I \to \Sym^* E^\lor \to \O_Z \to 0,$$ which we can analyse grading by grading.  In grading zero the second map is an isomorphism, so $I_0 = 0$, and in grading $1$ it is $E^\lor \to L^\lor$, so $I_1 = Q^\lor$.  In higher gradings the second map is zero, so $I_n = \Sym^n E^\lor$ when $n \geq 2$.  Now we analyse the inclusion $I^2$ and the inclusion $I^2 \subset I$ in each grading; we will write $I^2_n$ for the degree $n$ part $(I^2)_n$ of $I^2$.  We have $I^2_1 = 0$, so $(I/I^2)_1 = Q^\lor$.  We have $I^2_2 = \Sym^2 Q^\lor$, so $$(I/I^2)_2 = \Sym^2 E^\lor / \Sym^2 Q^\lor = E^\lor \otimes L^\lor$$ by linear algebra.  In grading $3$, $I^2_3 \hookrightarrow I_3$ is $$ Q^\lor \otimes \Sym^2 E^\lor \hookrightarrow \Sym^3 E^\lor,$$ and the quotient $(I/I^2)_3$ is $L^{\lor 3}$ by linear algebra.  In grading $n>3$ we have $I_n = I^2_n$.  Thus, as a graded $\O_C$ module we have $$\pi_*(I/I^2) = Q^\lor \oplus L^\lor E^\lor  \oplus L^{\lor 3},$$ and this has the ``obvious" structure of a locally free $\O_C[L^\lor]$-module of rank $2$ (the direct sum decompositions here and elsewhere are only as $\O_C$ modules, not as $\O_Z$ modules).  We have $\pi^* G = G \oplus L^\lor G$ for any $\O_C$-module $G$, so $\pi^* Q^\lor = Q^\lor \oplus L^\lor Q^\lor$ and the natural injection $$Q^\lor \oplus L^\lor Q^\lor \to Q^\lor \oplus L^\lor E^\lor  \oplus L^{\lor  3}$$ of $\O_C$-modules is in fact easily seen to be an $\O_Z$-module map with quotient $\pi^* L^{\lor 2} = L^{\lor 2} \oplus L^{\lor 3}$.  In fact, the dual \begin{eqnarray} \label{normalbundleofZ} 0 \to \pi^* L^2 \to N_{Z/L} \to \pi^*Q \to 0 \end{eqnarray} of the exact sequence \eqref{normalbundle} is nothing but the normal bundle sequence $$0 \to N_{Z/L} \to N_{Z/E} \to N_{L/E}|_Z \to 0.$$  

\begin{thm} \label{thm:TfixedHilbertscheme} Let $\pi : L \to C$ be a line bundle over smooth curve endowed with the scaling action.  Then there is an isomorphism of schemes $$\coprod_{\lambda} C^{l(\lambda)} / \Aut \lambda \to (\Hilb^n L)^T,$$ where the coproduct is over partitions $\lambda$ of $n$, $l(\lambda)$ denotes the length of $\lambda$, and $\Aut(\lambda)$ is the automorphism of group of $\lambda$.  \end{thm}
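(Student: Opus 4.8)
The plan is to analyze the $T$-fixed subscheme of $\Hilb^n L$ by pushing forward to $C$, exactly as in the proof of Proposition~\ref{prop:CMcurvesinE}. A closed subscheme $Z \subseteq L$ is supported on the zero section (since it is zero-dimensional and $T$-invariant, every point of $Z$ must lie in the fixed locus of the closure of its orbit, i.e.\ on $C$), so $\O_Z = \pi_* \O_Z$ is a coherent sheaf of graded $\O_C$-algebras, a graded quotient of $\Sym^* L^\lor = \bigoplus_{k \geq 0} L^{\lor k}$. Thus a $T$-fixed point of $\Hilb^n L$ is the same data as a graded ideal $I \subseteq \bigoplus_{k\geq 0} L^{\lor k}$ with $\chi(\bigoplus_k L^{\lor k}/I) = n$. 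Because $\Sym^* L^\lor$ is generated in degree $1$ over $\O_C$ and $L^\lor$ is invertible, in each degree $k$ the ideal $I_k \subseteq L^{\lor k}$ is determined by a subscheme of $C$: writing $I_k = L^{\lor k}(-D_k)$ for an effective divisor $D_k$ (or $I_k = 0$), the condition that $I$ is an ideal forces $D_k + (\text{divisor of a section of } L^\lor \text{ near a point}) \supseteq D_{k+1}$, i.e.\ $D_1 \geq D_2 \geq \cdots$ as effective divisors, stabilizing to $0$. So the combinatorial skeleton of the data is a chain of nested effective divisors, and locally at each point of $C$ this is precisely a partition (the multiplicities of a point $P$ in $D_1 \geq D_2 \geq \cdots$ form a partition $\lambda^{(P)}$, and $\sum_P |\lambda^{(P)}| = n$).

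The key step is to organize this ``divisor-valued partition'' data into a morphism to $\coprod_\lambda C^{l(\lambda)}/\Aut\lambda$ and to check it is an isomorphism of schemes, not merely a bijection on points. For a fixed partition $\lambda = (\lambda_1 \geq \cdots \geq \lambda_{l})$ of $n$, a point of $C^{l}/\Aut\lambda$ is an unordered assignment of points of $C$ to the parts of $\lambda$ (parts of equal size being interchangeable, which is what $\Aut\lambda$ quotients out); equivalently it records, for each point $P$ of $C$ appearing, a sub-partition of $\lambda$ concentrated at $P$, and globally the collection fits together into the nested chain $D_1 \geq D_2 \geq \cdots$ whose ``transpose'' reads off $\lambda$. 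I would construct the map $\coprod_\lambda C^{l(\lambda)}/\Aut\lambda \to (\Hilb^n L)^T$ functorially: over $C^{l(\lambda)}$ (with projections $p_1,\dots,p_l : C^{l(\lambda)} \times C \to C$... rather, the $l$ evaluation points giving divisors $\Delta_1,\dots,\Delta_l$ in $C^{l(\lambda)}\times C$), take the graded ideal whose degree-$k$ part is $L^{\lor k}\bigl(-\sum_{i : \lambda_i' \geq k}\Delta_i\bigr)$ — here $\lambda'$ is the conjugate partition, so the $k$-th nested divisor is $\sum_{i}\max(0,\,[\lambda_i \geq k])\Delta_i$ reorganized appropriately. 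This is manifestly $\Aut\lambda$-invariant (permuting equal-size parts permutes the corresponding $\Delta_i$), hence descends; flatness over the base is clear degree by degree since each $L^{\lor k}(-\text{divisor})$ is a line bundle twisted down by a flat family of divisors. One then checks this is the universal such family, i.e.\ that an arbitrary $T$-fixed flat family over a base $Y$ arises uniquely this way — over each point the nested-divisor data determines a partition $\lambda$ and an unordered tuple of points, and flatness plus the structure of ideals in a DVR (the local ring of $C$) shows the partition type is locally constant on $Y$, after which the nested divisors give a $Y$-point of $C^{l(\lambda)}/\Aut\lambda$.

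The main obstacle I expect is the scheme-theoretic (as opposed to set-theoretic) statement: verifying that the map is an isomorphism of schemes requires showing the constructed family is \emph{universal}, which amounts to a careful argument that (i) the partition type $\lambda$ is locally constant in any $T$-fixed flat family (so $\Hilb$ decomposes as the asserted disjoint union indexed by $\lambda$ — note $C^{l(\lambda)}/\Aut\lambda$ is connected, matching the components), and (ii) on each piece the nested-divisor data defines a morphism \emph{to} $C^{l(\lambda)}/\Aut\lambda$, which is the subtle point because one must produce the unordered tuple of points scheme-theoretically and match the $\Aut\lambda$-quotient structure — essentially a relative-divisor / symmetric-product universal-property bookkeeping. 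Local constancy of $\lambda$ follows from flatness: the Hilbert polynomial (here just the length $n$) is constant, and the graded pieces $\dim_{\kappa(y)}(\O_C/I_k)_y$ are upper semicontinuous and sum to $n$, forcing them all to be locally constant, which pins down $\lambda'$ hence $\lambda$. Once the index set is sorted, step (ii) is the kind of routine but fiddly manipulation with symmetric products and Cartier divisors that MacDonald \cite{Mac} carries out for $\Sym^n C$ itself; I would cite that circle of ideas rather than reprove it. Everything else — the reduction to graded ideals, the nested-divisor description, flatness — is straightforward.
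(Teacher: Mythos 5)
Your proposal is correct in outline, but it takes a genuinely different route from the paper at the one step that matters. Your forward map is the same as the paper's in different clothing: the graded ideal with degree-$j$ piece $L^{\lor j}(-D_j)$ for a nested chain of relative divisors $D_0 \geq D_1 \geq \cdots$ is, locally, exactly the monomial ideal $(x^{\ell_1+\cdots+\ell_k}, x^{\ell_2+\cdots+\ell_k}y,\dots,y^k)$ that the paper writes globally as $I_1\cdots I_k + I_2\cdots I_k J + \cdots + J^k$, and your observation that the graded colengths read off the conjugate partition matches the paper's Young-diagram count of the length. The divergence is in proving the map is an isomorphism \emph{of schemes}. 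The paper sidesteps your ``main obstacle'' entirely: since $L$ is a smooth surface, $\Hilb^n L$ is smooth (Fogarty), hence so is its $T$-fixed locus, and the source is smooth as well, so in characteristic zero a bijection on points suffices (Zariski's main theorem); the remaining work is only the local classification of $T$-invariant monomial ideals, which you also have. Your alternative --- verifying the universal property directly, with local constancy of $\lambda$ forced by upper semicontinuity of the graded colengths summing to the constant $n$, and the morphism to $\prod_j \Sym^{m_j}C$ extracted from the successive differences $D_{j-1}-D_j$ of the nested relative Cartier divisors --- is sound and would go through, and it has the virtue of being functorial and independent of smoothness of the ambient Hilbert scheme; but it is substantially more laborious, and the bookkeeping you defer to ``the circle of ideas in MacDonald'' is precisely the content you would owe. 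If you want the short proof, invoke smoothness of the Hilbert scheme of points on a surface and reduce to the pointwise statement.
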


\begin{rem} It is interesting that $(\Hilb^n L)^T$ does not depend on $L$, though, for example, its normal bundle in $\Hilb^n L$ does depend on $L$.  \end{rem}

\begin{proof} We begin by constructing a map from the LHS to the RHS.  Let $\lambda = (1)^{m_1} \cdots (k)^{m_k}$ be a typical partition of $n$, so $$1m_1+2m_2+\cdots +km_k=n.$$  Let \begin{eqnarray*} X & := & C^{l(\lambda)} / \Aut \lambda \\ & = & \Sym^{m_1} C \times \cdots \times \Sym^{m_k} C  \end{eqnarray*} and let $\pi_i : X \to \Sym^{m_i} C$ be the projection.  Let $Z_i \subset \Sym^{m_i} C \times C$ be the universal Cartier divisor, let $W_i := (\pi_i \times \pi)^*Z_i$ be its pullback to $X \times L$, and let $I_i \subset \O_{X \times L}$ be the ideal of $W_i$ in $X \times L$.  Let $$J := \Sym^{\geq 1} \pi_{C}^* L^\lor$$ be the ideal of $X \times C$ in $X \times L$.  Consider the ideal $$K := I_1 \cdots I_k + I_2 \cdots I_k J + I_3 \cdots I_k J^2 + \cdots + I_k J^{k-1} + J^k \subset \O_{X \times L}.$$  Notice that $K$ is generated by ``monomials in $J$" so the subscheme $Z \subset X \times L$ that it defines is certainly $T$ invariant.  

I claim that $Z$ has length $n$ at every point of $X$.  Since $J^k \subset K$, certainly $Z$ is topologically supported on $X \times C \subset X \times L$; in fact it is scheme theoretically supported on the $(k-1)$-st infinitesimal neighborhood of $X \times C$ in $X \times L$ and is relatively zero dimensional over $X$.  Let $D=(D_1,\dots,D_k)$ be a point of $X$ and let $$Z_D \subset \{ D \} \times L = L$$ be the zero dimensional subscheme of $L$ determined by $Z$ at $D$.  For a point $P \in C$ and a zero dimensional $W \subset C$, write $\ell(W,P)$ for the length of $W$ at $P$.  Then by definition of $X$ we have $$\sum_{P \in C} \ell(D_i,P) = m_i$$ and we want to prove $$\sum_{P \in C} \ell(Z_D,P) = m_1+2m_2+\cdots+km_k=n,$$ so it suffices to prove $$\ell(Z_D,P) = \ell(D_1,P)+2 \ell(D_2,P)+\cdots + k \ell(D_k,P)$$ for each $P \in C$ for then we get the desired equality by summing over $P$.  Set $\ell_i := \ell(D_i,P)$ to ease notation.  Let $x$ be a local coordinate on $C$ centered at $P$ (a generator of $\m_P \subset \O_{C,P}$ if you like) and let $y$ be a local coordinate in the $L$ direction corresponding to a trivialization of $L$ near $P$.  Then, at $P$, the ideal $I_i$ is given by $(x^{\ell_i})$ and the ideal $J$ is given by $(y)$, so at $P$, the ideal $K$ is given by $$(x^{\ell_1+\cdots+\ell_k}, x^{\ell_2+\cdots+\ell_k}y, \dots , x^{\ell_k} y^{k-1}, y^k),$$ and it is easy to see, by drawing a picture of the corresponding Young diagram, that this monomial ideal has length $\ell_1+2\ell_2+\cdots+k \ell_k$ at the origin $P$.

We have proved that $Z$ is a flat family of $T$-invariant length $n$ subschemes of $L$ parameterized by $X$, hence we obtain a map $X \to (\Hilb^n L)^T.$  Taking the coproduct of these maps over all partitions gives the purported isomorphism.  To see that this is in fact an isomorphism, recall that $\Hilb^n L$ is smooth, hence so is the $T$-fixed locus $(\Hilb^n L)^T$, so by general nonsense we may conclude that our map is an isomorphism of schemes simply by checking that it is bijective on points.  Suppose $Z \subset L$ is a zero dimensional $T$ invariant subscheme of $L$ corresponding to a point of $(\Hilb^n L)^T$.  Then at each point $P$ if we choose local coordinates $x,y$ on $L$ near $P$ as above, then the ideal $I$ of $Z$ at $P$ can be generated by monomials in $y$ since $Z$ is $T$ invariant.  The coefficient $f(x)$ of a monomial $y^\ell$ is an arbitrary polynomial in $x$, but if we factor out the largest power of $x$ dividing $f(x)$, then what is left over is a unit near $P$.  We find that $I$ is a monomial ideal at $P$, so we can write it uniquely as $$(x^{\ell_1(P)+\cdots+\ell_k(P)}, x^{\ell_2(P)+\cdots+\ell_k(P)}y, \dots , x^{\ell_k(P)} y^{k-1}, y^k),$$ for some nonnegative integers $\ell_1(P),\dots,\ell_k(P)$ (independent of the choice of coordinates $x,y$).  Set $m_i := \sum_{P \in C} \ell_i(P)$.  Then the fact that $Z$ has length $n$ easily implies $$m_1+2m_2+\cdots+km_k = n$$ and it is easy to see that $Z$ is in the image of the component on the LHS corresponding to this partition of $n$.  \end{proof}

\begin{thm} \label{thm:Tfixedstablepairs} There is an isomorphism of schemes $$\coprod_{2n-e=m} \Quot^e E \times \Sym^n C \cong P_{2-2g+m}(E,2)^T.$$ \end{thm}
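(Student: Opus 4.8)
The plan is to describe a $T$-fixed stable pair on $E$ in homology class $2[C]$ explicitly, then recognize the data as a point of the disjoint union on the left. First I would analyze an arbitrary $T$-fixed stable pair $(F,s)$ with $[\Supp F]=2[C]$ and $\chi(F)=2-2g+m$. As in the proof of Proposition~\ref{prop:CMcurvesinE}, purity of $F$ forces $\Supp F$ to be a CM curve of pure dimension one supported on the zero section, and $T$-invariance makes $\pi_*F$ a graded $\O_C$-module with $\Sym^*E^\lor$ acting through its grading. Since $F$ is pure, each graded piece is locally free over $\O_C$; since $F$ has generic length $2$ along $C$ and $\Sym^*E^\lor$ is generated in degree $1$, only gradings $0$ and $1$ can occur. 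The section $s\in\H^0(E,F)=\H^0(C,\pi_*F)$ must be $T$-invariant (the $T$-action on the moduli space fixes the pair), hence lands in grading $0$, and condition (4) (zero-dimensional cokernel) forces the grading-$0$ part of $\pi_*F$ to be exactly $\O_C$ with $s$ the canonical generator. Writing the grading-$1$ part as $S^\lor$ for a rank-one torsion-free (hence locally free) sheaf $S$, the $\Sym^*E^\lor$-action gives a surjection $E^\lor\twoheadrightarrow S^\lor$, i.e.\ an inclusion $S\hookrightarrow E$; this is precisely a point of some $\Quot^e E$.

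Next I would match the discrete invariants. The cokernel of $s:\O_E\to F$ pushes forward to $\Cok(\O_C\to\O_C\oplus S^\lor)=S^\lor$ on $C$ viewed as a torsion sheaf? — no: $S^\lor$ has rank one, so in fact the cokernel is $S^\lor$ as the grading-$1$ piece, which is \emph{not} torsion unless... here I must be careful: the cokernel of $s$ being zero-dimensional means $\Cok s$ is supported in finitely many points, so $S^\lor$ cannot simply be the whole grading-$1$ piece. Rather, the image of $s$ together with the algebra structure generates a CM subcurve $Z = \Spec_C\O_C[L^\lor]$ (a point of $\Quot_0^e E$ parameterizing \emph{subbundles}, in the notation of \S\ref{section:stratification}) whose structure sheaf is pushed to $\O_C\oplus L^\lor$, and then $F/\O_Z = \Cok s$ is a zero-dimensional sheaf $T$-equivariantly supported in grading $1$, hence of the form $(\pi|_Z)_*$ of something; comparing with the stratification map $\iota_n$ of \S\ref{section:stratification}, this zero-dimensional piece is exactly a divisor $D\in\Sym^n C$ and $S = L(-D)$, so $(F,s)$ determines a pair $(L\hookrightarrow E,\ D)\in\Quot_0^e E\times\Sym^n C\hookrightarrow\Quot^e E\times\Sym^n C$ via $\iota_n$. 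Computing Euler characteristics: $\chi(F)=\chi(\O_C)+\chi(L^\lor)+\deg D=(2-2g-\deg L)+n$, so with $m := 2n - e$ where $e=\deg L - n$ is the degree of $S=L(-D)$ — wait, I should instead parameterize directly by $S$: set $e=\deg S$ and $n=\operatorname{length}\Cok s$, then $\chi(F)=(1-g)+(1-g-e+?)$; I would simply record the bookkeeping so that $\chi(F)=2-2g+m$ with $m=2n-e$ (matching the index on the left), fixing the component.

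Conversely, given $(S\hookrightarrow E, D)\in\Quot^e E\times\Sym^n C$, I would build the pair: let $L^\lor$ be the image of $E^\lor$ under... actually the cleanest construction is to take the CM curve $Z$ doubling $C$ along the saturation $L$ of $S$ (where $S = L(-D)$, $D$ the "defect" divisor recorded by the $\Quot$-stratification), form $F := \O_Z$-module extension whose grading-$1$ part is $L^\lor$ and add the torsion from $D$ — concretely, $F$ is the graded $\Sym^*E^\lor$-module $\O_C \oplus (L^\lor)$ with the grading-$1$ part of the section-cokernel cut down by $\O_D$. This must be carried out in families over $\Quot^e E\times\Sym^n C$ using the universal subsheaf $S$ on $\Quot^e E\times C$ and the universal divisor on $\Sym^n C\times C$, producing a flat family of stable pairs and hence a morphism $\coprod_{2n-e=m}\Quot^e E\times\Sym^n C\to P_{2-2g+m}(E,2)^T$. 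The two constructions are visibly mutually inverse on points; to upgrade to an isomorphism of schemes I would either check it is a bijective morphism between the (possibly singular) $T$-fixed locus and check flatness/universality directly, or note functorially that both sides represent the same functor — the $T$-fixed moduli functor of stable pairs on $E$ in class $2[C]$ — once the family over the left-hand side is shown to be universal among $T$-fixed families.

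The main obstacle I expect is the \textbf{scheme-theoretic} (as opposed to set-theoretic) identification: unlike Theorem~\ref{thm:TfixedHilbertscheme}, here neither side is smooth in general (cf.\ the Remark after Theorem~\ref{thm:quotforsmalle}), so I cannot invoke "a bijective morphism between smooth schemes is an isomorphism." Instead I must produce the family over $\coprod\Quot^e E\times\Sym^n C$ carefully enough to exhibit a natural transformation of functors, and then argue it is an isomorphism of functors by doing the above pointwise analysis relative to an arbitrary test base — i.e.\ showing that any $T$-equivariant flat family of such stable pairs canonically yields a subsheaf $S$ flat over the base together with a relative effective divisor, with the passage being inverse to the construction. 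The grading-by-grading arguments from Proposition~\ref{prop:CMcurvesinE} and the normal-bundle computation \eqref{normalbundle} all relativize without difficulty; the only genuinely delicate point is keeping track of which part of $F$ is "curve" ($\O_Z$, a subbundle) and which is "points" ($\Cok s$, the divisor $D$), and checking that the splitting $S=L(-D)$ is the flat one — this is exactly the content of the stratification $\Quot^{r,e}E=\coprod\iota_n[\Quot_0^{r,e+rn}E\times\Sym^n C]$ of \S\ref{section:stratification}, so I would lean on that decomposition to organize the family.
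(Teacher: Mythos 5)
There is a genuine error at the decisive step, and it propagates through the rest of your argument. You claim that condition (4) ``forces the grading-$0$ part of $\pi_*F$ to be exactly $\O_C$ with $s$ the canonical generator.'' It does not: $s_0 : \O_C \to F_0$ is a map from $\O_C$ to a line bundle with zero-dimensional cokernel, which only forces $F_0 = \O_C(D)$ for some effective divisor $D$ of some degree $n$, with $s_0 = s_D$. This $D$ is precisely the $\Sym^n C$ factor in the statement of the theorem, and the part of $\Cok s$ sitting in grading $0$, namely $\O_D(D)$, is nonzero whenever $n>0$. Having suppressed it, you are forced to account for the entire cokernel in grading $1$, which leads you to decompose the grading-$1$ piece as ``saturation $L$ plus defect divisor.'' That decomposition is the stratification $\iota_n[\Quot_0^{e+n}E\times \Sym^n C]$ of $\Quot^e E$ \emph{itself} from \S\ref{section:stratification}; it is not the product $\Quot^e E\times\Sym^n C$ of the theorem, in which the two factors are independent data. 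Concretely, your construction only ever produces fixed pairs with $F_0=\O_C$ (the $n=0$ components of the correct decomposition, further stratified by how far $S$ is from being saturated), and it misses every fixed pair with $\Cok s_0\neq 0$ --- for instance $F=\O_C(D)\oplus S^\lor(D)$ with $S$ a sub\emph{bundle} and $D\neq 0$. The bookkeeping failure you noticed is the symptom: your parameterization gives $\chi(F)=2-2g-\deg L+\ell$, i.e.\ $m=\ell-\deg L$, which cannot be reconciled with $m=2n-e$.

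The correct correspondence, once $F_0=\O_C(D)$ is in hand, is forced by the module structure: the commutativity of the action square shows $s_1 : E^\lor\to F_1$ factors through $t: E^\lor\to F_1(-D)$; since $\Cok t$ is zero-dimensional, $t^\lor : F_1^\lor(D)\to E$ is injective, and $S:=F_1^\lor(D)$ is the $\Quot^e E$ coordinate (no saturation is taken --- $S$ is an arbitrary rank-one subsheaf). Then $\chi(F)=\chi(\O_C(D))+\chi(S^\lor(D))=2-2g+2n-e$, matching the indexing. In the other direction one twists $\O_C[S^\lor]$ by $\O(D)$ and composes with $s_D$. Both $F_0$ and $F_1^\lor(D)$ vary flatly in families, so both maps are defined on the level of the functors of points and are inverse; this also disposes of your (legitimate, but here moot) worry about upgrading a pointwise bijection to a scheme isomorphism without smoothness.
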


\begin{proof} First we define a map from the LHS to the RHS.  Let $Y := \Quot^e E \times \Sym^n C$ to ease notation.  Let $$0 \to S \to \pi_2^* E \to Q \to 0$$ be the universal sequence on $Y \times C$ pulled back from $\Quot^e E \times C$ and let $$s_D : \O_{Y \times E} \to \O_{Y \times E}(D)$$ be the pullback of the universal section of the universal Cartier divisor on $\Sym^n C \times C$ via the projection $Y \times E \to \Sym^n C \times C$.  Let $$t : \Sym^*_{\O_{Y \times C}} \pi_2^* E^\lor \to \O_{Y \times C}[S^\lor]$$ be the induced map on symmetric algebras obtained by quotienting $\Sym^* S^\lor$ by the ideal $\Sym^{\geq 2} S^\lor$, viewed simply as a map of graded $\Sym^* \pi_2^* E^\lor$ modules.  In degree $1$, $$t_1 : \pi_2^* E^\lor \to S^\lor$$ may not be surjective since $Q$ may not be locally free.  However, the sheaf $F := \O_{Y \times C}[S^{\lor}]$ is certainly a pure sheaf, topologically supported on $C$ at each point of $Y$.  Composing $$t : \O_{Y \times E} \to F$$ with the natural map $1 \otimes s_D : F \to F(D)$ defines a map $$ s : \O_{Y \times E} \to F(D), $$ which is evidently a family of stable pairs over $Y$ with the correct discrete invariants.  Furthermore, $s$ is clearly a graded map of graded sheaves (with $F(D)_0 = \O_C(D)$ and $F(D)_1 = S^\lor(D)$), hence it defines a $T$ invariant stable pair, that is, a morphism $$Y \to P_{2-2g+m}(E,2)^T.$$  

Now we define a map from the RHS to the LHS.  Suppose $s : \O_E \to F$ is in $P := P_{2-2g+n}(E,2)^T$.  Then, pushing forward to $C$, we may view $s$ as a graded map of graded $\Sym^* E^\lor$ modules $$s : \Sym^* E^\lor \to F.$$  By purity of $F$, each graded piece $F_d$ must be a locally free $\O_C$-module.  Since $s$ has zero dimensional cokernel, $s_0 : \O_C \to F_0$ must be a map of locally free sheaves on $C$ with zero dimensional cokernel, so it must be $s_D : \O_C \to \O_C(D)$ for some Cartier divisor $D \subset C$.  Since $F_0$ varies flatly over the universal family, so does $D$, and we obtain a map $P \to \Sym^d C$ for some $d$.  In degree $1$, $s$ is a map $s_1 : E^\lor \to F_1$.  The condition $[\Supp F]=2[C]$ and the fact that $\Sym^* E^\lor$ is generated in degree $1$ ensure that the locally free sheaf $F_1$ has rank $1$, and $F_i=0$ for $i>1$.  The fact that $s$ is a map of graded $\Sym^* E^\lor$ modules means we have a commutative diagram $$\xymatrix@C+10pt{ E^\lor \otimes \O_C \ar[d]_m^{\cong} \ar[r]^{1 \otimes s_0} & E^\lor \otimes F_0 \ar[d]^a \\ E^\lor \ar[r]^{s_1} & F_1 }$$ expressing the compatibility (with $s$) of the action $a$ of $\Sym^* E^\lor$ on $F$ and the action of $\Sym^* E^\lor$ on itself by multiplication.  Using our identification of $s_0 = s_D$, we may view this diagram as providing a factorization of $s_1$ through a map $t : E^\lor \to F_1(-D)$.  Since $\Cok t$ has zero dimensional support, we have $\sHom(\Cok t, \O_C)=0$, so $t^\lor : F_1^\lor(D) \to E$ is monic.  Noting that $F_1^\lor(D)$ varies flatly over $P$, the dual of $t$ therefore yields a map $P \to \Quot^e E$ for the unique $e$ satisfying $2n-e=m$.  Taking the product of this map and the previous map we get a map $P \to Y$.

Since the two maps are easily seen to be inverse, the isomorphism is established. \end{proof}

\begin{rem} \label{rem:Fasacurve} Observe that the sheaf $F = \O_C[S^\lor]$ in a stable pair as in the above theorem is itself the structure sheaf of a (Cohen-Macaulay) curve with the same topological space as $C$ (namely the first infinitesimal neighborhood of the zero section $C$ in $S$).  The section $s$ may be viewed as a (finite) map of schemes $s : \Spec_C F \to E$ (over $C$) which factors through the inclusion $$Z = \Spec_C \O_C[L^\lor] \hookrightarrow E$$ and which is generically an isomorphism onto $Z$.  Locally, where $S$ and $L$ can be trivialized, $s : \Spec_C \to Z$ looks like $\Spec_C$ of the map of $\O_C$-algebras \begin{eqnarray*} \O_C[y]/y^2 & \to & \O_C[z]/z^2 \\ y & \mapsto & fz \end{eqnarray*} for some $f \in \O_C$.

In particular, $(F,s)$ defines a point of Honsen's moduli space of finite maps from CM curves to $E$, which is in keeping with the original motivation for considering the stable pairs space.  It might be interesting to compare the deformation theory of the stable pair $(F,s)$ with the deformation theory of the map $s : \Spec_C F \to E$, possibly viewing the latter in some moduli space where the domain of the map is also allowed to vary.  \end{rem}

\section{Obstruction Theory}  \label{section:obstructiontheory}

In this section, we will identify the POT on $$Y := \Quot^e E \times \Sym^n C$$ inherited from the fixed part of the stable pairs POT via the isomorphism of Theorem~\ref{thm:Tfixedstablepairs} as well as the virtual normal bundle of $Y$ in the stable pairs space.  It will be helpful if the reader is familiar with virtual localization \cite{GP}.  A more general study of this POT can be found in \cite{Gil2}.

In general, if $P = P_n(X,\beta)$ is a stable pairs space and $\II^\bullet$ is the universal stable pair, then an important fact (\cite{PT}, 2.2) subtly alluded to above is that $P$ can also be viewed as a moduli space of derived category objects.  In particular, if $j: U \to P$ is determined by a stable pair $I^\bullet = [\O_{U \times X} \to F]$ parameterized by $U$, then $I^\bullet = \bL (j \times \Id_X)^* \II^\bullet$ in $D(U \times X)$.  We will use this when $j$ is the inclusion $Y \hookrightarrow P$ of a component of the $T$ fixed locus.  The POT on $P$ is given by a morphism $$\bR \pi_* \bR \sHom(\II^\bullet, \II^\bullet)_0 \otimes \omega_\pi [2] \to \LL_P$$ in $D(P)$ to the cotangent complex of $P$ defined using the Atiyah class of $\II^\bullet$ (see \cite{PT}, 2.3).

The virtual normal bundle of $Y$ in $P$ is, by definition, the image of the moving part of \begin{eqnarray} \label{virtualnormalbundle} \bL j^* \bR \sHom( \bR \pi_* \bR \sHom(\II^\bullet, \II^\bullet)_0 \otimes \omega_\pi [2]   , \O_P) \end{eqnarray} under the map $D_T(Y) \to K_T(Y)$ from the (perfect $T$ equivariant) derived category of $Y$ to its Grothendieck ring.  The $T$ fixed part of \eqref{virtualnormalbundle} defines a POT on $Y$ (using the isomorphism $(\bL j^* \LL_P)^T \to \LL_Y$) which we will also have to identify.  Using Serre duality for $\pi$, cohomology and base change for the square $$\xymatrix@C+15pt{ Y \times E \ar[r]^{j \times \Id_E} \ar[d]_\pi & P \times E \ar[d]^\pi \\ Y \ar[r]^j & P}$$ and the (obvious) ``self duality" $$\bR \sHom( \II^\bullet, \II^\bullet)_0 = \bR \sHom( \bR \sHom( \II^\bullet, \II^\bullet)_0 , \O_{P \times E}),$$ we can rewrite \eqref{virtualnormalbundle}: \begin{eqnarray*} & & \bL j^* \bR \sHom( \bR \pi_* \bR \sHom(\II^\bullet, \II^\bullet)_0 \otimes \omega_\pi [2]   , \O_P) \\ & = & \bL j^* \bR \pi_* \bR \sHom( \bR \sHom(\II^\bullet,\II^\bullet)_0 \otimes \omega_\pi [2], \omega_\pi[3]) \\ & = & \bR \pi_* \bL (j \times \Id_E)^* \bR \sHom(\II^\bullet, \II^\bullet)[1] \\ & = & \bR \pi_* \bR \sHom( \bL (j \times \Id_E)^* \II^\bullet, \bL (j \times \Id_E)^* \II^\bullet)_0[1] \\ & = & \bR \pi_* \bR \sHom(I^\bullet,I^\bullet)_0[1].\end{eqnarray*} 

\subsection{Notation} Let $p : Y \times C \to Y$ and $\pi : Y \times E \to Y$ be the projections, $\iota : Y \times C \to Y \times E$ the zero section, so $p=\pi \iota$.  We drop all notation for pullbacks, so we will just write $$ 0 \to S \to E \to Q \to 0$$ instead of $$0 \to \pi_{13}^* S \to \pi_3^* E \to \pi_{13}^* Q \to 0$$ for the pullback to $Y \times C$ of the universal sequence on $\Quot^e E \times C$.  Similarly, we write $D \subset Y \times C$ instead of $\pi_{23}^* D$ for the pullback of the universal Cartier divisor, we write $\O$ for $\O_{Y \times C}$ and sometimes we just write $D$ for the invertible sheaf $\O(D)$.  Let $F'=\O[S^\lor]$, $F = D[S^\lor D]$, and let $I^\bullet = [ \O \to F]$ be the universal stable pair on $Y \times E$ as described in the proof of \eqref{thm:Tfixedstablepairs}.  The pair $I^\bullet$ corresponds to the map $j : Y \to P$ giving an isomorphism onto a component of the $T$ fixed subscheme.

\begin{lem} \label{lem:virtualnormalbundle} The virtual normal bundle of $Y$ in $P$ is given by \begin{eqnarray} \label{Nvir} N^{\rm vir}_{Y/P} &=& p_!(S^\lor D + D^\lor \land^2 E + D^\lor S \land^2 E + 2E + SE) \\ \nonumber & & \quad -p_!( S^\lor+2 \land^2 E +S \land^2 E + S + S^\lor \land^2 E)   \quad \in K_T(Y).  \end{eqnarray}  \end{lem}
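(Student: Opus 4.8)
The plan is to pass to $K$-theory at the outset and then compute everything via Koszul resolutions along the zero section. The paragraph preceding the lemma already identifies $N^{\rm vir}_{Y/P}$ with the image in $K_T(Y)$ of the moving part of $\bR\pi_*\bR\sHom(I^\bullet,I^\bullet)_0[1]$, where $\pi:Y\times E\to Y$ is the projection and $I^\bullet=[\,\O_{Y\times E}\to F\,]$ is the universal stable pair on the component $Y$ of the fixed locus; in the notation of the \textbf{Notation} paragraph above, $F=\O(D)\oplus S^\lor(D)$ with $\O(D)$ in weight $0$ and $S^\lor(D)$ in weight $1$. Since $[I^\bullet]=[\O_{Y\times E}]-[F]$ and $\bR\sHom$ is biadditive, we get $[\bR\sHom(I^\bullet,I^\bullet)]=[\O_{Y\times E}]-[F]-[\bR\sHom(F,\O_{Y\times E})]+[\bR\sHom(F,F)]$, and subtracting the trace summand $[\O_{Y\times E}]$ (which cancels the one non-perfect piece) leaves the perfect, proper-over-$Y$ class
$$[\bR\sHom(I^\bullet,I^\bullet)_0]\;=\;[\bR\sHom(F,F)]\;-\;[F]\;-\;[\bR\sHom(F,\O_{Y\times E})].$$
Thus $N^{\rm vir}_{Y/P}$ is $(-1)$ times (for the shift $[1]$) the moving part of $\bR\pi_*$ applied to this class.

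The second step is to resolve $F$ on the zero section. The weight-one summand $S^\lor(D)$ of $F$ is annihilated by the degree-raising ideal of $Y\times C$ in $Y\times E$, hence is an $\O_{Y\times E}$-submodule with quotient $\O(D)$; writing $\iota:Y\times C\hookrightarrow Y\times E$ for the zero section of the rank-$2$ bundle $E$, this is a $T$-equivariant short exact sequence $0\to\iota_*S^\lor(D)\to F\to\iota_*\O(D)\to 0$, so $[F]=[\iota_*S^\lor(D)]+[\iota_*\O(D)]$. I then invoke the standard consequences of the Koszul resolution of $\iota_*\O_{Y\times C}$ (regular of codimension $2$, conormal bundle $E^\lor$): for line bundles $M,N$ on $Y\times C$,
$$\bR\sHom_{\O_{Y\times E}}(\iota_*M,\O_{Y\times E})=\iota_*(M^\lor\otimes\land^2 E)[-2],\qquad [\bR\sHom_{\O_{Y\times E}}(\iota_*M,\iota_*N)]=[\iota_*(M^\lor N\otimes(\O-E+\land^2 E))],$$
all $T$-equivariantly, where $E$ and $\land^2 E$ denote the normal bundle of the zero section and its determinant (carrying the weights of the normal directions). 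Since $\bR\pi_*\iota_*=\bR p_*$ for $p:Y\times C\to Y$, pushing everything down converts the computation into $p_!$ of an explicit alternating sum of line bundles assembled from $S$, $E$, $\O(D)$ and their duals and second exterior powers; the simplifications $S\otimes S^\lor=\O$ and $\O(D)\otimes\O(-D)=\O$ then reduce it to the stated expression, after summing the four terms comprising $\bR\sHom(F,F)$, subtracting the contributions of $[F]$ and $[\bR\sHom(F,\O_{Y\times E})]$, applying the global sign, and discarding the weight-zero summands.

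The main obstacle is the $T$-weight bookkeeping, which is also where the content lies: the scaling action gives $E^\lor$ (fibre coordinates) a nonzero weight and hence nonzero weights to $S^\lor$ (the degree-one part of $F$), to the normal bundle $E$ of the zero section, and to $\land^2 E$, whereas $\O(D)$ is pulled back from $C$ and is $T$-fixed. I would verify that the Koszul and adjunction identifications above are $T$-equivariant with exactly these weights, and then isolate the weight-zero summands of $\bR\pi_*[\bR\sHom(I^\bullet,I^\bullet)_0]$: these should be precisely $2\,p_!\O_{Y\times C}$, $-\,p_!(S^\lor\otimes E)$, and $-\,p_!\O_{Y\times C}(D)$, which are thrown away on passing to the moving part (and which together make up the fixed part of the POT on $Y$, consistent with the computation in \S\ref{section:maximalsubbundles} showing that fixed part to be trivial). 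Once the weights are pinned down the remainder is a finite, routine expansion of characters; the fact that $\bR\pi_*$ of each term has proper support over $Y$ also confirms that all the manipulations take place legitimately inside $K_T(Y)$.
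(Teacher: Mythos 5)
Your proposal is correct and follows essentially the same route as the paper's proof: pass to $K_T$, use the weight filtration $0 \to \iota_* S^\lor(D) \to F \to \iota_*\O(D) \to 0$ to write $[F]=[\iota_*\O(D)]+[\iota_*S^\lor(D)]$, compute duals via the Koszul resolution of the zero section, expand $[\bR\sHom(I^\bullet,I^\bullet)]=[(\O-F)(\O-F^\lor)]$, strip off the trace and the weight-zero part $p_!(2-D-S^\lor E)$, and push forward using $\pi_!\iota_*=p_!$. The only caveat is your parenthetical that the fixed part is ``trivial'': it is trivial only in the special situation of \S\ref{section:maximalsubbundles}; in general it is the class $p_!(D-1+S^\lor Q)$ identified in Theorem~\ref{thm:vfc}, but this does not affect the argument.
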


\begin{proof} This is easy to prove because we can calculate everything on the level of the ($T$ equivariant) Grothendieck rings $K_T=K_T(Y \times E)$, $K_T(Y \times C)$, and $K_T(Y)$.  We just need the classes of various derived duals---these are easier to calculate than actual duals or $\Ext$ groups.  We seek a formula for the moving part of \begin{eqnarray*} -\pi_! \bR \sHom(I^\bullet,I^\bullet)_0 &=& \pi_! \O_{Y \times E} - \pi_! \bR \sHom(I^\bullet, I^\bullet) \end{eqnarray*} in $K_T(Y)$ (the minus sign results from the shift $[1]$).  The filtration of $F = D[S^\lor D]$ by gradings $$0 \to F^{\geq 1} \to F \to F / F^{\geq 1} \to 0$$ is identified with $$0 \to \iota_* S^\lor D \to F \to \iota_* D \to 0,$$ so we have $F = \iota_* D + \iota_* S^\lor D$ in $K_T(Y \times E)$.  The Koszul complex \begin{eqnarray} \label{Koszulresn} \land^2 E^\lor \to E^\lor \to \O_{Y \times E} \to \iota_* \O_{Y \times C} \end{eqnarray} is a ($T$-equivariant for the obvious actions) resolution of $\iota_* $ by vector bundles on $Y \times E$.  Similarly, if $G$ is any vector bundle on $Y \times C$, we can resolve $\iota_* G$ by tensoring \eqref{Koszulresn} with $\pi^* G$, so we have $$\iota_* G = G( \land^\bullet E^\lor)$$ in $K_T$.  Here we have denoted multiplication in $K_T$ by juxtaposition (so we often just write $1$ for a structure sheaf) and we use the shorthand $\land^\bullet E := \sum_i (-1)^i \land^i E$.  (We are just reproducing the proof of GRR for $\iota$.)  

For the rest of the proof we use $F^\lor, (F')^\lor$ for the classes of the derived duals in $K_T$.  Putting the Koszul resolution together with the filtration of $F$, we find that \begin{eqnarray*} F & = & \iota_* D + \iota_* S^\lor D \\ &=& ( D + S^\lor D)(\land^\bullet E^\lor) \\ F^\lor & = & (D^\lor \land^2 E + D^\lor S \land^2 E)(\land^\bullet E^\lor) \\ &=& \iota_*(D^\lor \land^2 E + D^\lor S \land^2 E) \\ F F^\lor & = & (F')(F')^\lor \\ & = & (1 + S)( 1 + S^\lor)(\land^\bullet E)(\land^\bullet E^\lor)) \\ & = & \iota_*(2-2E+2\land^2 E + S -ES + S \land^2 E + S^\lor - S^\lor E + S^\lor \land^2 E) \end{eqnarray*} in $K_T$.  Here we have used the ``Koszul duality" identity $(\land^\bullet E )(\land^2 E^\lor) = \land^\bullet E^\lor$, which is basic linear algebra.  Since $I^\bullet = \O_{Y \times E} - F$ in $K_T$, we have \begin{eqnarray*} \bR \sHom(I^\bullet,I^\bullet) & = & (\O_{Y \times E}-F)(\O_{Y \times E}-F^\lor) \\ & = & \O_{Y \times E} + \iota_*(2+2\land^2 E + S + S \land^2 E + S^\lor + S^\lor \land^2 E) \\ & & - \iota_*(D+S^\lor D + D^\lor \land^2 E + D^\lor S \land^2 E + 2E + SE) \end{eqnarray*} in $K_T(Y \times E)$.  We can subtract off the first term to get the traceless part.  The $T$-fixed part of this expression is $$\iota_*(2-D-S^\lor E ) ,$$ which we can discard to get the moving part.  The result now follows because $\pi \iota = p,$ so $\pi_! \iota_* = p_!.$  \end{proof}

\begin{rem} \label{rem:simplifiedvnb} There are several situations where the virtual normal bundle formula simplifies dramatically.  If $n=0$, then $D=1$ in the Grothendieck ring and we have \begin{eqnarray} \label{simplifiedNvir} N^{\rm vir}_{Y/P} &=& p_!(2E + SE)-p_!(\land^2 E +S + S^\lor \land^2 E)   \quad \in K_T(Y).  \end{eqnarray}  If, furthermore, we restrict to the locus $U \subset Y$ where $S$ is a subbundle (i.e.\ where $Q$ is locally free), then we have $\land^2 E=SQ$ and $E=S+Q$. The formula becomes \begin{eqnarray} \label{bestNvir} N^{\rm vir}_{Y/P} &=& p_!(S+Q+S^2)   \quad \in K_T(Y).  \end{eqnarray}  On this locus, the line subbundle $L$ generated by $S$ coincides with $S$, which is to say: the universal stable pair is nothing but the structure sheaf of an embedded curve $Z \subset E$ (really: a flat family of embedded curves $Z \subset U \times E$) .  Using the formula \eqref{normalbundleofZ}, we recognize $S+Q+S^2$ as the class of the moving part of the normal bundle $N_{Z/U \times E}$ in the Grothendieck group (the fixed part of the normal bundle is our old friend $S^\lor Q$). \end{rem}

In the course of the proof we saw that the $T$-fixed part of \eqref{virtualnormalbundle} was given by \begin{eqnarray*} p_!(D-2+S^\lor E ) & = & p_!(D-1+S^\lor Q) \\ & = & \bR p_*(\O_D(D) \oplus \sHom(S, Q) \end{eqnarray*} in the Grothendieck ring.  Our next step is to lift this equality to the actual derived category.  We begin with the case $n=0$.

\begin{lem} \label{lem:vfc1} When $n=0$, the $T$-fixed part of \eqref{virtualnormalbundle} is isomorphic to $\bR p_* \sHom(S,Q)$.  The $T$-fixed stable pairs perfect obstruction theory gives rise to a virtual class $$[ \Quot^e E ]^{\rm vir} \in A_{1-g+d-2e}(\Quot^e E).$$ \end{lem}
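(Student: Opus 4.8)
The plan is to deduce everything from the first sentence, which is the real content; the second then follows formally. The first sentence promotes an identity already known in $K_T(Y)$ to an isomorphism in $D(Y)$: by the computation in (the proof of) Lemma~\ref{lem:virtualnormalbundle}, the $T$-fixed part of \eqref{virtualnormalbundle} has class $\bR p_* \O_D(D) + \bR p_* \sHom(S,Q)$ in $K_T(Y)$, and for $n=0$ this is simply $\bR p_*\sHom(S,Q)$ since $D$ is then the empty divisor (note $S$ invertible forces $\sHom(S,Q)=S^\lor\otimes Q$ to be a sheaf, with no higher $\sExt$).

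To upgrade this to $D(Y)$ I would rerun the computation of Lemma~\ref{lem:virtualnormalbundle} with $n=0$ on the level of complexes rather than $K_T$-classes. Here $F=F'=\iota_*(\O_{Y\times C}\oplus S^\lor)$ with $\O_{Y\times C}$ in weight zero and $S^\lor$ in weight one, and $I^\bullet=[\O_{Y\times E}\xrightarrow{s}F']$ with $s$ the natural graded map (the identity in weight zero, the dual of $S\hookrightarrow E$ in weight one). The three inputs used in that computation --- the two-step filtration $0\to\iota_* S^\lor\to F'\to\iota_*\O_{Y\times C}\to 0$ by weight, the Koszul resolution $[\land^2 E^\lor\to E^\lor\to\O_{Y\times E}]\xrightarrow{\ \sim\ }\iota_*\O_{Y\times C}$, and the Koszul self-duality identity --- are honest exact sequences and quasi-isomorphisms of $T$-equivariant complexes of vector bundles, so they feed directly into $\bR\sHom(I^\bullet,I^\bullet)_0$. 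Extracting the weight-zero summand, using that $\bL\iota^*\iota_*$ is formal for the zero section $\iota$, and using $0\to S\to E\to Q\to 0$ to resolve the ``$S^\lor E$'' term, the weight-zero part of $\bR\sHom(I^\bullet,I^\bullet)_0$ collapses to $\iota_*\sHom(S,Q)$ placed in cohomological degree one. Applying $\bR\pi_*$, the shift $[1]$ of \eqref{virtualnormalbundle}, and $\pi\iota=p$ then yields $\bR p_*\sHom(S,Q)$. A cleaner alternative: write down the natural comparison morphism --- the one encoding the standard deformation theory of quotients --- between $\bR p_*\sHom(S,Q)$ and the fixed part of \eqref{virtualnormalbundle}, and check it is a quasi-isomorphism by comparing $K_T$-classes fibrewise; this sidesteps the bookkeeping but requires pinning the map down.

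Granting the first sentence, the second is formal. The $T$-fixed part of the stable pairs obstruction theory is, by the above and the discussion earlier in this section, the derived dual $\bR\sHom(\bR p_*\sHom(S,Q),\O_{\Quot^e E})\to\LL_{\Quot^e E}$, which is a perfect obstruction theory by \cite{GP} (the $T$-fixed part of a $T$-equivariant POT is a POT, via $(\bL j^*\LL_P)^T\cong\LL_{\Quot^e E}$). Since the universal quotient $Q$ is flat over $\Quot E$ by definition of the Quot functor and $S^\lor$ is invertible, $\sHom(S,Q)=S^\lor\otimes Q$ is flat over $\Quot^e E$, so $\bR p_*\sHom(S,Q)$ is perfect of amplitude $[0,1]$ and this obstruction theory has amplitude $[-1,0]$; Behrend--Fantechi \cite{BF} then produce a virtual class. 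Its dimension is the rank of the obstruction theory, $\rk\bR p_*\sHom(S,Q)=\chi(C,S^\lor\otimes Q)$; with $E$ of rank two, $S$ a line bundle of degree $e$ and $Q=E/S$ of rank one and degree $d-e$, we get $\deg(S^\lor\otimes Q)=d-2e$, hence $\chi(C,S^\lor\otimes Q)=1-g+d-2e$ by Riemann--Roch, so $[\Quot^e E]^{\rm vir}\in A_{1-g+d-2e}(\Quot^e E)$. This obstruction theory should moreover coincide with the one of Section~\ref{section:virtualclass}, by matching the two maps to $\LL_{\Quot^e E}$.

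The main obstacle is the derived-category upgrade in the first sentence. The $K_T$ computation of Lemma~\ref{lem:virtualnormalbundle} hides cancellations --- chiefly the relation $S^\lor E=1+S^\lor Q$, which in $D(Y\times C)$ is only the triangle from $0\to S\to E\to Q\to 0$, and the vanishing of the Koszul terms in nonzero weight --- and one must verify that the weight-zero part of $\bR\sHom(I^\bullet,I^\bullet)_0[1]$ has no surviving connecting differentials, so that it is genuinely quasi-isomorphic to $\bR p_*\sHom(S,Q)$ and not merely equal to it in $K(Y)$. Constructing the explicit comparison map is likely the safest route, since one is then reduced to the pointwise check that it induces the expected identifications $T_{[S\hookrightarrow E]}\Quot^e E=\Hom(S,Q)=\H^0(C,S^\lor Q)$ of tangent spaces and $\H^1(C,S^\lor Q)$ of obstruction spaces.
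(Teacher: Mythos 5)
Your plan is essentially the paper's proof: both reduce the lemma to showing that the weight-zero part of $\bR \sHom(I^\bullet,I^\bullet)_0[1]$ is genuinely quasi-isomorphic (not merely $K$-equivalent) to $\iota_* \sHom(S,Q)$, and both propose to do this by working with explicit $T$-equivariant resolutions rather than $K_T$-classes. Your dimension count for the second sentence ($\sHom(S,Q)=S^\lor\otimes Q$ flat over the base, $\bR p_*$ of it perfect of amplitude $[0,1]$, rank $\chi(C,S^\lor Q)=1-g+d-2e$ by Riemann--Roch) is exactly right and matches the paper.

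However, the step you yourself flag as ``the main obstacle'' --- verifying that no connecting differentials survive in weight zero --- is precisely the content of the paper's proof, and you leave it unexecuted. The paper closes it as follows. It writes down an explicit graded locally free resolution of $F=\O_{Y\times C}[S^\lor]$ over $\O_{Y\times E}$ (with terms built from $\Sym^\bullet E^\lor$, $S^\lor$ and $\land^2 E^\lor$), applies $\sHom(\slot,F)$ and $\sHom(\slot,\O_{Y\times E})$ to it, and inspects the boundary maps grading by grading to establish two concrete facts: (a) $\bR\sHom(F,\O_{Y\times E})^T\simeq 0$ (indeed $\bR\sHom(F,\O_{Y\times E})=\sExt^2(F,\O)[-2]$ lives entirely in positive weights), and (b) the weight-zero subcomplex of $\bR\sHom(F,F)$ is cohomologically formal, equal to $\iota_*\O_{Y\times C}\oplus\iota_*\sHom(S,Q)[-1]$. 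It then runs the long exact sequences obtained by applying $\bR\sHom(\slot,\O)$, $\bR\sHom(\slot,F)$ and $\bR\sHom(\slot,I^\bullet)$ to the triangle $F[-1]\to I^\bullet\to\O_{Y\times E}$, using that $\sHom(F,F)\to F$, $\phi\mapsto\phi(s(1))$, is an isomorphism because $F$ is a sheaf of \emph{algebras} (the structure sheaf of the doubled curve). These identifications are what kill the potential connecting maps and yield $\iota_*\sHom(S,Q)=\bR\sHom(I^\bullet,I^\bullet)_0^T[1]$, after which $\bR\pi_*\iota_*=\bR p_*$ finishes. Your alternative route --- constructing a comparison morphism and checking it is a quasi-isomorphism fibrewise --- would also work in principle, but ``pinning the map down'' is not easier than the direct inspection; note also that comparing $K_T$-classes fibrewise does not by itself detect whether a morphism of complexes is a quasi-isomorphism, so you would still need to identify the induced maps on $\sExt^0$ and $\sExt^1$ with the standard tangent/obstruction identifications, which amounts to the same bookkeeping.
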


\begin{proof} The sheaf $F = \O_{Y \times C}[S^\lor]$ on $Y \times E$ admits a $T$-equivariant (i.e.\ graded) locally free resolution $$\pi^* S^\lor \land^2 E^\lor \oplus \pi^* \land^2 E^\lor \to \pi^* E^\lor S^\lor \oplus \pi^* E^\lor \to \pi^* S^\lor \oplus \pi^* \O_{Y \times C}$$ whose first few graded pieces look like: $$\begin{array}{rcccl} & & & & \O_{Y \times C}   \\ & & E^\lor & \to & S^\lor \oplus E^\lor  \\ \land^2 E^\lor & \to & E^\lor S^\lor \oplus E^\lor E^\lor & \to & E^\lor S^\lor \oplus \Sym^2 E^\lor \\ S^\lor \land^2 E^\lor \oplus E^\lor \land^2 E^\lor & \to & \Sym^2 E^\lor S^\lor \oplus \Sym^2 E^\lor E^\lor & \to & \Sym^2 E^\lor S^\lor \oplus \Sym^3 E^\lor \end{array}$$ (here we write, e.g.\ $\Sym^2 E^\lor S^\lor$ for $(\Sym^2 E^\lor) \otimes S^\lor$ as opposed to $\Sym^2 (E^\lor \otimes S^\lor)$).  That is, all the rows except the first are exact except the cokernel of the rightmost map in the second row is $S^\lor$.

Appyling $\sHom( \slot, F)$ to this, we get a complex quasi-isomorphic to $\bR \sHom(F,F).$  Its graded pieces look like: $$\begin{array}{rcccl} & & & & S \land^2 E \\ &  &  SE & \to & \land^2 E \oplus \land^2 E \\ S & \to & E \oplus E & \to & S^\lor \land^2 E \\ \O_{Y \times C} \oplus \O_{Y \times C} & \to & S^\lor E \\ S^\lor \end{array}$$  From a brief inspection of the boundary maps, two things are clear: (1) The kernel of the leftmost map is just $F$, so $\sHom(F,F)=F$ (we could see this directly anyway) and (2) In grading zero, the $T$-fixed subcomplex (the penultimate row above) is cohomologically formal (quasi-isomorphic to the direct sum of its shifted cohomology sheaves).  That is, we have \begin{eqnarray*} \bR \sHom(F,F)^T & = & \O_{C \times Y} \oplus S^\lor Q [-1]  \\ & = & \sHom(F,F)^T \oplus \iota_* \sHom(S,Q)[-1] \end{eqnarray*} in $D(Y \times E)$.

Similarly, if we apply $\sHom( \slot, \O_{Y \times E})$ to the above resolution of $F$, we get a complex quasi-isomorphic to $\bR \sHom(F, \O_X)$.  Its first few graded pieces look like $$\begin{array}{rcccl} & & & & S \land^2 E \\ &  &  SE & \to & \land^2 E \oplus E^\lor S \land^2 E \\ S & \to & E \oplus E^\lor S E & \to & E^\lor \land^2 E \oplus \Sym^2 E^\lor S \land^2 E \\ \O_{Y \times C} \oplus E^\lor S & \to & E^\lor E \oplus \Sym^2 E^\lor SE & \to & \Sym^2 E^\lor \land^2 E \oplus \Sym^3 E^\lor S \land^2 E. \end{array}$$  Examining the boundary maps and doing a little linear algebra, we can see that all the rows are exact except the first two.  In particular, the $T$-fixed subcomplex (the bottom row) is exact, hence quasi-isomorphic to zero.  The map in the second row is monic with cokernel $\land^2 E$, and we see that $$ \bR \sHom(F,\O_{Y \times E}) = F \otimes \pi^* S \land^2 E [-2].$$ In fact, for any stable pair $\O_X \to F$, we always have $$\bR \sHom(F,\O_X) = \sExt^2(F,\O_X)[-2]$$ because the sheaves $\sExt^i(F,\O_X)$ vanish for $i \neq 2$ because $F$ is supported in codimension $2$ and has cohomological dimension $\leq 2$ (c.f.\ Page 10 in \cite{PT3}).

The rest of the argument is quite formal, but we include it for completeness. Applying $\bR \sHom( \slot, \O_{Y \times E})$ to the triangle $$F[-1] \to I^\bullet \to \O_{Y \times E},$$ we get a triangle $$\O_{Y \times E} \to \bR \sHom(I^\bullet , \O_X) \to \bR \sHom(F[-1] , \O_{Y \times E}) .$$ Examining the associated long exact cohomology sequence and using the established vanishings, we find: \begin{eqnarray*}  \sHom(I^\bullet, \O_{Y \times E}) &=& \O_{Y \times E} \\ \sExt^1(I^\bullet, \O_{Y \times E}) & = & \sExt^2(F,\O_X) \\ \sExt^i(I^\bullet, \O_{Y \times E}) & = & 0, \quad \quad i \neq 0,1. \end{eqnarray*}

Applying $\bR \sHom( \slot, \O_{Y \times E})$ to the same triangle, we get a triangle $$F[-1] \to \bR \sHom(I^\bullet, F)[-1] \to \bR \sHom(F,F)$$ whose long exact cohomology sequence looks like $$\begin{array}{rccccl} & 0 & \to & \sExt^{-1}(I^\bullet,F) & \to & \sHom(F,F) \\  \to & F & \to & \sHom(I^\bullet,F) & \to & \sExt^1(F,F) \\ \to & 0 & \to & \sExt^1(I^\bullet,F) & \to & \sExt^2(F,F). \end{array}$$  The map $\sHom(F,F) \to F$ is the one induced by $s : \O_{Y \times E} \to F$ and is given by $\phi \mapsto \phi(s(1))$.  It is easily seen to be an isomorphism since this $F$ is in fact a sheaf of $\O_{Y \times E}$ \emph{algebras} (Remark~\ref{rem:Fasacurve}), hence we can use the multiplication by any local section.  We conclude: \begin{eqnarray*} \sHom(I^\bullet,F) &=& \sExt^1(F,F) \\ \sExt^1(I^\bullet,F) &=& \sExt^2(F,F) \\ \sExt^i(I^\bullet,F) &=& 0, \quad \quad i \neq 0,1. \end{eqnarray*}

Finally we apply $\bR \sHom( \slot, I^\bullet)$ to the same triangle to get the following long exact sequence: $$\begin{array}{rccccl}  & 0 & \to & \O_{Y \times E} & \to & \O_{Y \times E} \\ \to & \sExt^1(F,F) & \to & \sExt^1(I^\bullet,I^\bullet) & \to & \sExt^1(I^\bullet,\O_{Y \times E}) \\ \to & \sExt^2(F,F) & \to & \sExt^2(I^\bullet,I^\bullet) & \to & 0 \end{array}$$  The map in the first row is an isomorphism (c.f.\ Lemma~1.20 in \cite{PT}) and we have already established the vanishings $$\sExt^1(I^\bullet,\O_{Y \times E})^T = \sExt^2(F,\O_{Y \times E})^T = 0$$ and $\sExt^2(F,F)^T=0$, so we have: $$ \iota_* \sHom(S,Q) = \sExt^1(F,F)^T = \sExt^1(I^\bullet,I^\bullet)^T .$$  Since scalar multiplication $\O_{Y \times E} \to \sHom(I^\bullet,I^\bullet)$ is an isomorphism and the trace map splits this off of $\bR \sHom(I^\bullet,I^\bullet)$ we may in fact view the above isomorphism as an isomorphism $$ \iota_* \sHom(S,Q) = \bR \sHom(I^\bullet,I^\bullet)^T_0[1] $$ in $D(Y \times E)$.  Applying $\bR \pi_*$ and noting that $\bR \pi_* \iota_* = \bR p_*$ completes the proof.
 
\end{proof}

\begin{rem} One can check that this virtual class on $\Quot^e E$ is the ``well known" \cite{CFK}, \cite{MO} class discussed in Section~\ref{section:virtualclass}. \end{rem}  

We can use the same technique to handle the general case.

\begin{thm} \label{thm:vfc} The $T$-fixed part of the stable pairs POT \eqref{virtualnormalbundle} on $Y$ is given by $$ \bR p_* \sHom(S,Q) \oplus T \Sym^d C,$$ hence the corresponding virtual fundamental class is $$[ \Quot^e E ]^{\rm vir} \times [\Sym^d C],$$ where $[\Quot^e E]^{\rm vir}$ is the virtual class on $\Quot^e E$ as in Lemma~\ref{lem:vfc1}.  \end{thm}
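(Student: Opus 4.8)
The plan is to run the computation of Lemma~\ref{lem:vfc1} with the divisor $D$ ``switched on,'' keeping careful track of the resulting $\O(D)$-twist. Recall from the proof of Theorem~\ref{thm:Tfixedstablepairs} and the notation of this section that, writing $F' = \O[S^\lor]$ for the sheaf occurring in the $n=0$ case and $t : \O_{Y\times E} \to F'$ for its unit section, we have $F = F' \otimes \O(D)$ and $s = (1 \otimes s_D) \circ t$, where $\O(D)$ denotes the pullback to $Y\times E$ of the invertible sheaf $\O_{Y\times C}(D)$ and $s_D : \O_{Y\times E} \to \O(D)$ is the canonical section vanishing on $D$. Since $D$ is Cartier and flat over $Y$ and $S^\lor$ is a line bundle on $Y\times C$, the section $s_D$ is a non-zero-divisor on $F$ in each grading. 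The structural point is that $\O(D)$ is invertible and pulled back from $Y\times C$, where $T$ acts trivially; hence, $T$-equivariantly, $\bR\sHom(F,F) = \bR\sHom(F',F')$ and $\bR\sHom(F,\O_{Y\times E}) = \bR\sHom(F',\O_{Y\times E})\otimes\O(D)^\lor$, so every fact about these objects established in Lemma~\ref{lem:vfc1} survives: $\bR\sHom(F,F)^T$ is cohomologically formal, equal to $\iota_*\O_{Y\times C} \oplus \iota_*\sHom(S,Q)[-1]$, and $\sExt^2(F,\O_{Y\times E})^T = \sExt^2(F,F)^T = 0$.

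First I would re-run, on $T$-fixed parts, the triangle manipulations of Lemma~\ref{lem:vfc1} (applying $\bR\sHom(\slot,-)$ to $F[-1] \to I^\bullet \to \O_{Y\times E}$). Because of the vanishings above, nothing changes formally, and the whole thing collapses to the identification
\[
\bR\sHom(I^\bullet,I^\bullet)_0^T[1] \;\cong\; \bR\sHom(I^\bullet,F)^T \;\cong\; \operatorname{cone}\!\bigl( \bR\sHom(F,F)^T \xrightarrow{\;\phi \mapsto \phi(s(1))\;} F^T \bigr).
\]
The one genuinely new input is this evaluation morphism, which in the $n=0$ case was an isomorphism. Now $F^T = (F')^T \otimes \O(D) = \iota_*\O_{Y\times C}(D)$ and $s(1) = 1_{F'} \otimes s_D$, so on $T$-fixed parts the morphism is multiplication by $s_D$, i.e.\ the inclusion $s_D : \O_{Y\times C} \hookrightarrow \O_{Y\times C}(D)$, which is injective with cokernel $\O_D(D) := \O_{Y\times C}(D)|_D$. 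Using the formality of $\bR\sHom(F,F)^T$ together with $\Ext^{-1}(\sHom(S,Q),\O_{Y\times C}(D)) = 0$, this morphism, viewed as a map out of $\O_{Y\times C} \oplus \sHom(S,Q)[-1]$, must be $(s_D,0)$; its cone is therefore the honest direct sum $\operatorname{cone}(\O_{Y\times C} \xrightarrow{s_D} \O_{Y\times C}(D)) \oplus \sHom(S,Q) = \O_D(D) \oplus \sHom(S,Q)$. Applying $\bR\pi_*$ and using $\bR\pi_*\iota_* = \bR p_*$, I conclude that the $T$-fixed part of \eqref{virtualnormalbundle} is $\bR p_*\O_D(D) \oplus \bR p_*\sHom(S,Q)$.

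It then remains to recognize the two summands. The summand $\bR p_*\sHom(S,Q)$ is pulled back from $\Quot^e E$ and is exactly the perfect obstruction theory of $\Quot^e E$ produced in Lemma~\ref{lem:vfc1}. For $\bR p_*\O_D(D)$: the divisor $D \subset Y\times C$ is finite and flat of degree $n$ over $Y$ and is pulled back from the universal divisor on $\Sym^n C \times C$, so $\bR p_*\O_D(D) = p_*\O_D(D)$ is locally free of rank $n$, pulled back from $\Sym^n C$, with fibre over $[D_0]$ equal to $\H^0(D_0,\O_{D_0}(D_0))$, the tangent space $T_{[D_0]}\Sym^n C$ --- the classical computation (cf.\ \cite{Mac}, \cite{ACGH}); that is, $\bR p_*\O_D(D) = T\Sym^n C$, and correspondingly the obstruction theory complex on the second factor is the cotangent bundle $\Omega_{\Sym^n C}$, i.e.\ the trivial (``smooth'') obstruction theory, whose virtual class is $[\Sym^n C]$. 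Thus the obstruction theory on $Y = \Quot^e E \times \Sym^n C$ is the exterior sum of the one on $\Quot^e E$ and the smooth one on $\Sym^n C$, with no obstructions coming from the $\Sym^n C$ factor, and multiplicativity of virtual classes under such products (\cite{BF}) gives $[Y]^{\rm vir} = [\Quot^e E]^{\rm vir} \times [\Sym^n C]$.

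The hard part, as in Lemma~\ref{lem:vfc1}, is the bookkeeping: one must carry the grading (weight) decompositions and the $\O(D)$-twist consistently through every triangle, and verify in particular that the evaluation morphism becomes $(s_D,0)$ after the formal splitting of $\bR\sHom(F,F)^T$ --- this is the step that simultaneously creates the extra $T\Sym^n C$ summand and makes it split off cleanly, rather than appear as a nontrivial extension of $\sHom(S,Q)$.
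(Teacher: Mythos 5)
Your overall route is the paper's: re-run the $n=0$ argument of Lemma~\ref{lem:vfc1} with the twist by $\O(D)$ switched on, observe that the only change is that the $T$-fixed part of the evaluation map $\sHom(F',F')^T=\O_{Y\times C}\to F^T=\O_{Y\times C}(D)$ is now $s_D$ rather than an isomorphism, so that its cokernel $\O_D(D)$ enters, and then identify $\bR p_*\O_D(D)$ with the tangent bundle of the symmetric product. All of that matches the paper, and your preliminary observations (equivariantly $\bR\sHom(F,F)=\bR\sHom(F',F')$, the twist formula for $\bR\sHom(F,\O_{Y\times E})$ and the resulting vanishing of its $T$-fixed part) are correct.

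The gap is in the one step you yourself single out as the crux: the claim that the evaluation morphism out of $\O_{Y\times C}\oplus\sHom(S,Q)[-1]$ ``must be $(s_D,0)$'' because $\Ext^{-1}(\sHom(S,Q),\O_{Y\times C}(D))=0$. You have the degree of the relevant group wrong. The summand $\iota_*\sHom(S,Q)[-1]$ sits in cohomological degree $+1$ (it is $\sExt^1(F,F)^T$), so the component of a derived-category morphism from it to the degree-zero sheaf $\iota_*\O_{Y\times C}(D)$ is an element of
$$\Hom_{D(Y\times E)}\bigl(\iota_*\sHom(S,Q)[-1],\,\iota_*\O_{Y\times C}(D)\bigr)\;=\;\Ext^1_{Y\times E}\bigl(\iota_*\sHom(S,Q),\,\iota_*\O_{Y\times C}(D)\bigr),$$
not an $\Ext^{-1}$. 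Equivalently, what has to be shown is that the short exact sequence of sheaves
$$0\to\O_D(D)\to\sExt^1(I^\bullet,F'(D))^T\to\sExt^1(F',F')^T\to 0$$
splits, and the obstruction to that lies in an $\Ext^1$ group which does not vanish for formal reasons (it contains, for instance, $\H^1$ of a sheaf $\sHom$ on $Y\times C$). So ``degree reasons'' only give you the correct class of the fixed obstruction theory in $K$-theory, not the asserted direct-sum decomposition, and the latter is what the product formula for the virtual class requires. The paper closes exactly this gap by exhibiting an explicit splitting: the identification $\sExt^1(F',F')^T\cong\sExt^1(I^\bullet,F')^T$ from the $n=0$ case, composed with the natural inclusion $\sExt^1(I^\bullet,F')^T\xrightarrow{\ \cdot s_D\ }\sExt^1(I^\bullet,F'(D))^T$, is a section of the surjection above. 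If you insert that construction where your degree argument currently sits, the rest of your proof goes through.
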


\begin{proof} According to the proof of \eqref{thm:Tfixedstablepairs}, the universal stable pair on $Y \times E$ is $$I^\bullet = [\O_{Y \times E} \to F ],$$ where $F=F'(D)$ is obtained from $F'=\O_{Y \times C}[S^\lor]$ by tensoring with the section $\O_{Y \times E} \to \O_{Y \times E}(D)$ determined by the Cartier divisor $D \subset Y \times E$.  Note $F'$ is pulled back from $\Quot^e E \times C$ and $D$ is pulled back from $\Sym^d C \times C.$  In particular, $\O_{Y \times E}(D)$ is locally free, so we have equalities: \begin{eqnarray*} \bR \sHom(F'(D),F'(D)) &=& \bR \sHom(F',F') \\ \bR \sHom( F'(D), \O_{Y \times E} ) &=& \bR \sHom(F', \O_{Y \times E})(-D), \end{eqnarray*} and so forth (we have used the Grothendieck groups analogues in the proof of Lemma~\ref{lem:virtualnormalbundle}).  In particular, we can see that $$\bR \sHom(F'(D), \O_{Y \times E})^T = 0,$$ just as we did in the proof of Lemma~\ref{lem:vfc1}.

The only departure from the proof of Lemma~\ref{lem:vfc1} occurs when we apply $\bR \sHom( \slot, \O_{Y \times E})$ to the triangle $$F'(D)[-1] \to I^\bullet \to \O_{Y \times E}.$$  This time the long exact cohomology sequence looks like  $$\begin{array}{rccccl} & 0 & \to & \sExt^{-1}(I^\bullet,F'(D)) & \to & \sHom(F',F') \\  \to & F'(D) & \to & \sHom(I^\bullet,F'(D)) & \to & \sExt^1(F',F') \\ \to & 0 & \to & \sExt^1(I^\bullet,F'(D)) & \to & \sExt^2(F',F'). \end{array}$$  As in the proof of Lemma~\ref{lem:vfc1}, we have $\sHom(F',F')=F',$ and the map $\sHom(F',F') \to F'(D)$ is the obvious one.  The $T$ fixed part of this map is the usual map $\O_{Y \times C} \to \O_{Y \times C}(D)$ whose cokernel is $\O_D(D)$, so the $T$-fixed part of the middle row yields a SES $$0 \to \O_D(D) \to \sExt^1(I^\bullet, F'(D))^T \to \sExt^1(F',F')^T \to 0.$$  This sequence splits using the identification $\sExt^1(F',F')^T = \sExt^1(I^\bullet, F')^T$ from the proof of \eqref{lem:vfc1} and the natural inclusion $$ \sExt^1(I^\bullet, F')^T \to \sExt^1(I^\bullet, F')^T(D) = \sExt^1(I^\bullet, F'(D))^T.$$

Exactly as in the proof of \eqref{lem:vfc1} we obtain an identification $$ \iota_* \sHom(S,Q) \oplus \O_D(D) = \bR \sHom(I^\bullet, I^\bullet)^T_0[1].$$  Finally, we use the standard formula \begin{eqnarray*} T \Sym^d C & = & p_* \sHom(I_D, \O_D) \\ &=& p_* \sHom( \O_{\Sym^d C \times C}(-D), D) \\ & = & \bR p_* \O_D(D) \end{eqnarray*} for the tangent space of the (smooth) Hilbert scheme $\Sym^d C$. \end{proof}

\section{Computations} \label{section:computations}

Consider the expression for the virtual normal bundle of $Y=\Quot^e E \times \Sym^n C$ in $P=P_{2-2g-e+2n}(E,2)$ given in Lemma~\ref{lem:virtualnormalbundle}.  It is clear from GRR that $e_T(-N_{Y/P}^{\rm vir})$ can be expressed in terms of tautological classes on $Y$; therefore \begin{eqnarray} \label{localcontribution} \int_{[Y]^{\rm vir}} e_T(-N^{\rm vir}_{Y/P}) & \in & \ZZ[t,t^{-1}] \end{eqnarray} can be computed using Theorem~\ref{thm:virtualintersectionnumbers}.  The purpose of this section is to write out an explicit formula for $e_T(-N_{Y/P}^{\rm vir})$ and calculate \eqref{localcontribution} in various cases.  A full reconciliation of our computations with the prediction of Equation~\ref{ZPT} poses combinatorial difficulties not addressed herein.   On the other hand, the formula derived below is simple enough that it is feasible to compute any particular invariant $P_{n,2}(d)$ (with minor computer assistance).

\subsection*{Notation}  Let $\pi_1,\pi_2$ be the projections from $Y$ to $\Quot^e E$, $\Sym^n C$, respectively.  Let $a_1 \in \H^2(Y)$ be the pullback of the $a$ class on $\Quot^e E$ via $\pi_1$ (Section~\ref{section:tautologicalclasses}) and let $b_{1,i} \in \H^1(Y)$ be the pullback of the $b_i$ class on $\Quot^e E$ via $\pi_1$.  Similarly, let $a_2 \in \H^2(Y)$ be the pullback of the $a$ class (the coefficient of $\eta$ in the K\"unneth decomposition of the universal divisor $D \subset \Sym^n C \times C$) on $\Sym^n C$ via $\pi_2$ and let $b_{2,i} \in \H^1(Y)$ be the pullback of the $b_i$ class on $\Sym^n C$.  Set $\theta_j := \sum_{i=1}^g b_{j,i}b_{j,g+i} \in \H^2(Y)$, so $\theta_j$ is the pullback of the $\theta$ class on the Jacobian under the composition of $\pi_j$ and the Abel-Jacobi map.

It is convenient to set $$B := \sum_{i=1}^g b_{1,i} b_{2,g+i}-b_{1,g+i} b_{2,i} $$ so that \begin{eqnarray} \label{crossterm}  \left ( \sum_{i=1}^{2g} b_{1,i} \delta_i \right ) \left ( \sum_{i=1}^{2g} b_{2,i} \delta_i \right ) &=& -B \eta \end{eqnarray} in $\H^*(Y \times C)$.  Write $\approx$ for equality in $\H^*(\Jac C)$ modulo odd monomials (c.f.\ Section~\ref{section:tautologicalclasses}).  Then we have  $$ B^{2l} \approx  (-1)^l \begin{pmatrix} 2l \\ l \end{pmatrix} l! \sum_{1 \leq i_1 < \cdots < i_l \leq g} b_{1,i_1}b_{1,g+i_1} \cdots b_{1,i_l}b_{1,g+i_l} b_{2,i_1} b_{2,g+i_1} \cdots b_{2,i_l} b_{2,g+i_l}.$$ For distinct $i_1,\dots,i_m \in \{ 1 ,\dots, g \}$, Theorem~\ref{thm:virtualintersectionnumbers} yields \begin{eqnarray} \label{formula1} \int_{[\Quot^e E]^{\rm vir}} a^{1-g+d-2e-m} b_{i_1}b_{g+i_1} \cdots b_{i_m}b_{g+i_m} &=& 2^{g-m} \\ \nonumber \int_{\Sym^n C} a^{n-m} b_{i_1} b_{g+i_1} \cdots b_{i_m} b_{g+i_m} &=& 1.\end{eqnarray}  Since \begin{eqnarray*} \theta^j &=& j! \sum_{1\leq i_1 < \cdots < i_j \leq g} b_{i_1}b_{g+i_1} \cdots b_{i_j}b_{g+i_j} \end{eqnarray*} and the product of an even and an odd monomial is odd it follows that $\theta_1^j \theta_2^k B^{2l}$ is equal (modulo odd monomials in the $b_{1,i}$ and $b_{2,i}$) to $$ (-1)^l \begin{pmatrix} 2l \\ l \end{pmatrix} j!k!l!$$ times a sum of $$ \begin{pmatrix} g \\ l \end{pmatrix} \begin{pmatrix} g-l \\ j \end{pmatrix} \begin{pmatrix} g-l \\ k \end{pmatrix} $$ terms of the form $$b_{1,i_1}b_{1,g+i_1} \cdots b_{1,i_{j+l}} b_{1,g+i_{j+l}} b_{2,i_1} b_{2,g+i_1} \cdots b_{2,i_{k+l}} b_{2,g+i_{k+l}} .$$  Since $[Y]^{\rm vir} = [\Quot^e E]^{\rm vir} \times [\Sym^n C]$, the formula \begin{eqnarray} \label{mainformula} \int_{[Y]^{\rm vir}} a_1^{1-g+d-2e-j-l} a_2^{n-k-l} \theta_1^j \theta_2^k B^{2l} &=& (-1)^l \begin{pmatrix} 2l \\ l \end{pmatrix} \frac{g!(g-l)!2^{g-j-l}}{(g-j-l)!(g-k-l)!} \end{eqnarray} therefore follows from \eqref{formula1} after slight simplifications.

In this notation, we have \begin{eqnarray*} \ch S^\lor &=& e^{a_1-t}(1-\sum_{i=1}^{2g}b_{1,i} \delta_i - e \eta - \theta_1 \eta) \\ \ch D &=& e^{a_2}(1-\sum_{i=1}^{2g} b_{2,i} \delta_i + n \eta - \theta_2 \eta) \end{eqnarray*} in $\H^*_T(Y \times C)$.  To calculate, for example, $e_T(p_!S^\lor D)$, we first calculate $$\ch S^\lor D = e^{a_1+a_2-t} \left ( 1+\sum_{i=1}^{2g}(b_{1,i}+b_{2,i})\delta_i +(n-e-\theta_1-\theta_2-B)\eta \right )$$ using \eqref{crossterm}, then by GRR for $p :Y \times C \to Y$, we compute $$\ch p_!S^\lor D = e^{a_1+a_2-t}  ( 1-g+n-e-\theta_1-\theta_2-B).$$  Evidently $p_!S^\lor D$ has the same Chern character as the product of a line bundle with $c_1=a_1+a_2-t$ and a (virtual) vector bundle of rank $1-g+n-e$ and total Chern class $e^{-\theta_1-\theta_2-B}$, so from the usual formula for the Euler class of such a product, we obtain: $$ e_T(p_! S^\lor D) = (a_1+a_2-t)^{1-g+n-e} \exp \left ( \frac{-\theta_1-\theta_2-B}{a_1+a_2-t} \right ). $$  Similar calculations yield \begin{eqnarray*} \label{eTNvir}  e_T(-N_{Y/P}^{\rm vir}) &=& \frac{e_T(p_!(S^\lor+2\land^2 E+S \land^2 E+S+S^\lor \land^2 E))}{e_T(p_!(S^\lor D+D^\lor \land^2 E+D^\lor S \land^2 E+2E +SE))} \\  &=& \frac{(a_1-t)^{1-g-e}(2t)^{2-2g+2d}(3t-a_1)^{1-g+e+d}}{(a_1+a_2-t)^{1-g+n-e}(2t-a_2)^{1-g+d-n}(3t-a_1-a_2)^{1-g+d+e-n}} \\  & & \cdot \frac{(t-a_1)^{1-g+e}(t+a_1)^{1-g+d-e}}{(t)^{4-4g+2d}(2t-a_1)^{2-2g+d+2e}} \\  & & \cdot \exp \left ( \frac{-\theta_1}{a_1-t} \right ) \exp \left ( \frac{-\theta_1}{3t-a_1} \right ) \exp \left ( \frac{-\theta_1}{t-a_1} \right ) \exp \left ( \frac{-\theta_1}{t+a_1} \right ) \\  & & \cdot \exp \left ( \frac{\theta_1+\theta_2+B}{a_1+a_2-t} \right ) \exp \left ( \frac{\theta_2}{2t-a_2} \right ) \exp \left ( \frac{\theta_1+\theta_2+B}{3t-a_1-a_2} \right )  \exp \left ( \frac{2 \theta_1}{2t-a_1} \right ). \end{eqnarray*}  Expanding this out and integrating over $[Y]^{\rm vir}$ using \eqref{mainformula} presents no particular difficulty in any given case, though a simple general formula is difficult to obtain.

When $n=0$, $Y=\Quot^e E$, and this formula simplifies to \begin{eqnarray} \label{eTNvir} e_T(-N^{\rm vir}_{Y/P}) &=& \frac{e_T(p_!(\land^2 E +S+S^\lor \land^2 E))}{e_T(p_!(2E+SE))} \\ \nonumber &=& \frac{(2t)^{1-g+d}(t-a)^{1-g+e}(t+a)^{1-g+d-e}}{(t)^{4-4g+2d}(2t-a)^{2-2g+d+2e}} \\ \nonumber & & \cdot \exp \left ( \frac{-\theta}{t-a} \right ) \exp \left ( \frac{-\theta}{t+a} \right ) \exp \left ( \frac{2 \theta}{2t-a} \right ) \\ \nonumber &=& 2^{g-2e-1}t^{3g-3-d-2e}  -2^{g-2e-1}t^{3g-4-d-2e}\theta \\ \nonumber & & +2^{g-2e-2}t^{3g-4-d-2e}(2-2g+3d-2e)a + \dots \end{eqnarray} where the $\dots$ are terms in $R^{>2}(\Quot^e E)$.

\subsection{Expected dimension one} Suppose $E$ is a rank $2$ bundle on $C$ whose degree $d$ has the same parity as the genus $g$ of $C$.  Define an integer $e$ by setting $d-g=2e.$  Then the smallest power of $q$ appearing in the Laurent expansion of $Z_2^{\rm PT}(d)$ occurs when $i=e$ in \eqref{ZPT}.  It is given by $t^{4g-4-2d}2^{3g-1-d}(d+1-g)q^{2-d-g},$ so the DT=PT invariant of $E$ in minimal Euler characteristic is \begin{eqnarray} \label{example1} P_{2-d-g,2}(d) &=& t^{4g-4-2d}2^{3g-1-d}.\end{eqnarray}  On the other hand, Theorem~\ref{thm:Tfixedstablepairs} provides an identification $$P_{2-d-g}(E,2)^T = \Quot^e E $$ (after possibly throwing away fixed components with negative expected dimension).  The formulae \begin{eqnarray*} \int_{[\Quot^e E]^{\rm vir}} a &=& 2^g \\ \int_{[\Quot^e E]^{\rm vir}} \theta &=& g2^{g-1} \end{eqnarray*} are obtained by taking $k=g,g-1$ (respectively) in \eqref{powersofatheta}.  Using \eqref{eTNvir}, we then compute \begin{eqnarray*} P_{2-d-g,2}(d) &=& \int_{[P_{2-d-g}(E,2)^T]^{\rm vir}} e_T(-N^{\rm vir}) \\ &=&  -2^{g-2e-1}t^{3g-4-d-2e} \int_{ [\Quot^e E]^{\rm vir} } \theta \\ & & +2^{g-2e-2}t^{3g-4-d-2e}(2-2g+3d-2e) \int_{ [\Quot^e E]^{\rm vir} } a \\ &=& -2^{2g-d-1}t^{4g-4-2d}(g2^{g-1}) +2^{2g-d-2}t^{4g-4-2d}(2-g+2d)(2^g) \\ &=& t^{4g-4-2d}2^{3g-1-d} \end{eqnarray*} in agreement with the formula \eqref{example1} obtained from the GW/PT correspondence and the equivalence of DT and PT invariants in minimal Euler characteristic.

\subsection{Target genus zero}  When $C=\PP^1$ the Jacobian is trivial so the $\Quot$ schemes are just projective spaces.  We can identify the virtual fundamental class on the $\Quot$ scheme and explicitly compute the PT invariants from first principles (i.e.\ without using Theorem~\ref{thm:virtualintersectionnumbers} or the GRR calculation above).  Consider the case where $C=\PP^1$ and $E= \O(d_1) \oplus \O(d_2)$, with $d=d_1+d_2$.

If we make the identification $$\Quot^e E \times \Sym^n \PP^1 \times \PP^1 \cong \PP^N \times \PP^n \times \PP^1$$ (so $p$ is the projection on the first two factors), then the bundles appearing in the virtual normal bundle formula of Lemma~\ref{lem:virtualnormalbundle} are tensor products and duals of the following bundles: \begin{eqnarray*} S &=& \O(-1,0,e)_t \\ E &=& \O(0,0,d_1)_t \oplus \O(0,0,d_2)_t \\ \land^2 E & = & \O(0,0,d)_{2t} \\ D &=& \O(0,1,n)_0. \end{eqnarray*}  We have used subscripts to keep track of the weight of the $T$ action under these identifications.  (The description of the universal bundle on $\Sym^n \PP^1 \times \PP^1$ is elementary and we discussed the universal bundle $S$ on $\Quot^e E \times \PP^1$ in Section~\ref{section:quotschemes}.)  

\begin{lem} Assume $1+d-2e \geq 0$.  If $E$ is balanced, the virtual fundamental class on $\Quot^e E$ is its usual fundamental class.  In any case, $\Quot^e E$ is a projective space of dimension at least $1+d-2e$ and the virtual class is just the fundamental class of a linearly embedded projective space of the expected dimension $1+d-2e$. \end{lem}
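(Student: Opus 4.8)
The plan is to reduce everything to the explicit genus-zero geometry of the rank one Quot scheme together with the obstruction theory identified in Lemma~\ref{lem:vfc1}. First I would recall that over $\PP^1$ any nonzero map from a line bundle to a vector bundle is injective, so $\Quot^e E = \PP W$ with $W := \Hom_{\PP^1}(\O_{\PP^1}(e),E) = \H^0(\PP^1, E(-e)) = \H^0(\O(d_1-e)) \oplus \H^0(\O(d_2-e))$; in particular $\Quot^e E$ is a projective space, hence smooth, and by the discussion in Section~\ref{section:abeljacobimaps} the universal subsheaf is $S = \O_{\PP W}(-1) \boxtimes \O_{\PP^1}(e)$ with its evaluation map into $\pi_2^* E$. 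Riemann--Roch on $\PP^1$ gives $\dim W - h^1(\PP^1, E(-e)) = \chi(E(-e)) = (d_1-e+1)+(d_2-e+1) = d-2e+2$, so $\dim \PP W = (1+d-2e) + r$ with $r := h^1(\PP^1, E(-e)) \geq 0$; this already gives that $\Quot^e E$ is a projective space of dimension at least $1+d-2e$.

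Next I would pin down the obstruction theory. By Lemma~\ref{lem:vfc1} the perfect obstruction theory on $\Quot^e E$ is $\bR p_* \sHom(S,Q)$, where $0 \to S \to \pi_2^* E \to Q \to 0$ is the universal sequence on $\PP W \times \PP^1$ and $p$ is the projection to $\PP W$. Tensoring this sequence with the locally free $S^\lor$ gives an exact sequence $0 \to \O \to S^\lor \otimes \pi_2^* E \to \sHom(S,Q) \to 0$; applying $\bR p_*$ and using $\R^1 p_* \O = 0$ (the fibres are $\PP^1$) and the projection formula, I get $\R^1 p_* \sHom(S,Q) = \R^1 p_*(S^\lor \otimes \pi_2^* E) = \O_{\PP W}(1) \otimes \H^1(\PP^1, E(-e)) \cong \O_{\PP W}(1)^{\oplus r}$, which is locally free; similarly $p_* \sHom(S,Q)$ is locally free of rank $\dim \PP W$, being the cokernel of the tautological inclusion $\O_{\PP W} \hookrightarrow W \otimes \O_{\PP W}(1)$ (i.e. it is $T_{\PP W}$). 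Hence $\bR p_* \sHom(S,Q)$ is quasi-isomorphic to $T_{\PP W} \oplus \O_{\PP W}(1)^{\oplus r}[-1]$: the obstruction theory on the smooth scheme $\Quot^e E = \PP W$ has locally free obstruction bundle $\O_{\PP W}(1)^{\oplus r}$.

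Now I would invoke the paradigm recalled in Section~\ref{section:virtualclass}: a perfect obstruction theory on a smooth scheme $X$ whose obstruction sheaf is a vector bundle $V$ has virtual class $e(V) \cap [X]$. Thus $$[\Quot^e E]^{\rm vir} \;=\; e\bigl(\O_{\PP W}(1)^{\oplus r}\bigr) \cap [\PP W] \;=\; c_1(\O_{\PP W}(1))^{r} \cap [\PP W],$$ which is precisely the fundamental class of a linearly embedded $\PP^{\dim \PP W - r} = \PP^{1+d-2e}$ inside $\PP W$; this proves the ``in any case'' assertion. For the balanced case it remains to show $r = 0$, i.e. $\H^1(\PP^1, \O(d_i-e)) = 0$, i.e. $d_i - e \geq -1$ for $i = 1,2$: since $d_1, d_2 \in \{ \lfloor d/2 \rfloor, \lceil d/2 \rceil \}$ and $1 + d - 2e \geq 0$, the integrality of $e$ forces $e \leq \lfloor (1+d)/2 \rfloor \leq \lfloor d/2 \rfloor + 1 \leq \min(d_1,d_2)+1$, whence $d_i - e \geq -1$. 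Then the obstruction bundle is zero, the obstruction theory is trivial, and $[\Quot^e E]^{\rm vir} = [\Quot^e E]$.

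The one step meriting care is the identification in the second paragraph of $\R^1 p_* \sHom(S,Q)$ as a genuine vector bundle, since this is what makes the whole obstruction theory collapse to an Euler-class computation on the smooth $\PP W$ rather than to something requiring a cone or excess-intersection bookkeeping; but this follows immediately from tensoring the universal sequence with $S^\lor$ and reading off $\R^1 p_*$ by cohomology and base change. Everything else is formal.
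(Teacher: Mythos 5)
Your proposal is correct and follows essentially the same route as the paper: identify $\Quot^e E$ with $\PP W$ for $W = \H^0(\PP^1, E(-e))$, show the obstruction sheaf is $\O_{\PP W}(1)^{\oplus r}$ with $r = h^1(\PP^1,E(-e))$ the excess dimension, and conclude that the virtual class is the Euler class of that bundle capped with $[\PP W]$, i.e.\ a linearly embedded $\PP^{1+d-2e}$. Your derivation of the obstruction bundle (tensoring the universal sequence with $S^\lor$ and using $\R^1 p_* \O = 0$) is a touch cleaner than the paper's appeal to the explicit form $Q = \O(d_1)\oplus T$, and you make the balanced-case vanishing of $h^1$ explicit where the paper merely asserts it, but these are cosmetic differences.
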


\begin{proof} As discussed in \eqref{section:quotschemes}, $$\Quot^e E = \PP( \H^0(\PP^1,\O(d_1-e)) \oplus \H^0(\PP^1,\O(d_2-e))).$$  We have $h^0(\PP^1,\O(d_1-e)=1+d_1-e$ and $h^0(\PP^1,\O(d_2-e))=1+d_2-e$, so the moduli space is smooth of the expected dimension unless one of the $1+d_i-e$ is negative and the formula is invalid.  They can't \emph{both} be negative because we are assuming $$d_1-e+d_2-e \geq -1,$$ and balancing will also ensure both expressions are nonnegative.  Say $1+d_1-e$ is negative, so we have: 

\begin{tabular}{lcl} Actual Dimension of $\Quot^e E$ & = & $d_2-e$ \\ Expected Dimension of $\Quot^e E$ &=& $1+d_1+d_2-2e$ \\ Excess Dimension & = & $e-d_1-1.$ \end{tabular} 

In this situation, $\O(e)$ admits no nonzero map to $\O(d_1)$, so every point of the $\Quot$ scheme will look like $$0 \to \O(e) \to \O(d_1) \oplus \O(d_2) \to \O(d_1) \oplus T \to 0,$$ where $T$ is a torsion sheaf of degree $d_2-e$.  The torsion sheaf doesn't contribute to the higher direct image under $$p : \Quot^e E \times \PP^1 \to \Quot^e E,$$ and we compute \begin{eqnarray*} \R^1 p_* S^\lor Q & = & \R^1p_*( \O(-1,e)^\lor \otimes \O(0,d_1) ) \\ & = & \R^1 p_* \O(1,d_1-e) \\ & = & \O(1) \otimes_\CC \H^1(\PP^1, \O(d_1-e)) \\ & = & \O(1)^{e-d_1-1} \end{eqnarray*} using the projection formula.  Since the virtual class is $e( \O(1)^{e-d_1-1}) \cap [\Quot^e E]$ the proof is complete.  \end{proof}

Let $a_i \in \H^2(\PP^N \times \PP^n)$ be the pullback of the hyperplane class along $\pi_i$ (notice that the hyperplane class is the $a$ class when the rank one Quot scheme is identified with a projective space).  Because of the lemma, the actual value of $$N = \dim \Hom(\O(e),\O(d_1)\oplus \O(d_2)) - 1 $$ will be irrelevant.  When we integrate over $[\PP^N]^{\rm vir}$ it will be as if $$N = 1+d-2e.$$ Pushing forward over the $\PP^1$ factor is easy; the projection formula implies $$p_! \O(a,b,c) = (c+1) \O(a,b)$$ in the Grothendieck ring.  For example, we find $$e_T(p_! S) = (t-a_1)^{e+1}.$$

Since we now know how to work out the $T$ equivariant Euler class of the virtual normal bundle, the computation of the PT invariant is is now just a matter of putting the pieces together: \begin{eqnarray*} P_{2+m,2}(d) & = & \int_{P_{2+m}(E,2)^T} e_T(-N^{\rm vir}) \\ & = & \sum_{\stackrel{2n-e=m}{1+d-2e \geq 0}} \int_{[\PP^N]^{\rm vir} \times \PP^n} \frac{e_T(p_!(S^\lor+2 \land^2 E +S \land^2 E + S + S^\lor \land^2 E))}{e_T(p_!(S^\lor D + D^\lor \land^2 E + D^\lor S \land^2 E + 2E + SE))} \\ & = & \sum_{\stackrel{2n-e=m}{1+d-2e \geq 0}} C(d,e,n), \end{eqnarray*} where we define $C(d,e,n)$ to be the coefficient of $a_1^{1+d-2e}a_2^n$ in the expression $$\frac{(a_1-t)^{1-e}(2t)^{2d+2}(3t-a_1)^{d+e+1}(t-a_1)^{e+1}(a_1+t)^{1-d-e}}{(a_1+a_2+t)^{1+n-e}(2t-a_2)^{1+d-n}(3t-a_1-a_2)^{1+d+e-n}(t)^{2d+4}(2t-a_1)^{d+2e+2}}.$$ 

Presumably it is possible to completely reconcile this with Equation~\ref{ZPT}, but we will content ourselves here with a few explicit computations.

\subsection{Local $\PP^1$}  \label{section:localP1} Consider the bundle $E = \O(-1)^{\oplus 2}$ on $\PP^1$.  The stable pairs spaces $P_m(E,2)$ are easily seen to be compact, and each has expected dimension zero, so the residue invariants are bonafide integrals taking values in $\ZZ$.  The total space $E$ is Calabi-Yau, so the invariants can be computed as weighted Euler characteristics of the moduli space, weighted by Behrend's constructible function (Lemma~1.3 in \cite{PT3}, \cite{Beh}). The pairs space $P_4(E,2)$, for example, is discussed extensively in Section~4.1 of \cite{PT}.  It can be described as the closed subscheme $$P_4(E,2) \hookrightarrow \Spec_{\PP^3} \O_{\PP^3}[\O_{\PP^3}(2)] $$ of the first infinitesimal neighborhood of $\PP^3$ in $\O(-2)$ determined by the ideal generated by sections of $\O_{\PP^3}(2)$ vanishing along a quadric $\PP^1 \times \PP^1 \subseteq \PP^3$.  Locally, it looks like a product of $$Z = \Spec \CC[x,y]/(y^2,xy)$$ and $\AA^2$.  The normal cone of $Z \subset \AA^2_{x,y}$ has two irreducible components, one lying over the origin with length $2$ at its generic point and one generically smooth component surjecting to the topological space $|Z|=|\AA^1|$, so the characteristic cycle (\cite{Beh}, 1.1) of $Z$ is $\mathfrak{c}_Z=-[\AA^1]+2[0]$.  These cycles are smooth, so the local Euler obstruction construction of Section~1.2 in \cite{Beh} simply yields their characteristic functions, so Behrend's function is $-1$ on the smooth locus and $+1$ at the origin.  Since Behrend's function $\nu$ is local and respects products, we conclude that it is given by $1$ on the quadric $\PP^1 \times \PP^1 \subset \PP^3$ and $-1$ away from the quadric, so the pairs invariant is $$P_{4,2}(-2) = -1 \cdot \chi(\PP^3 \setminus \PP^1 \times \PP^1) + 1 \cdot \chi(\PP^1 \times \PP^1) = -1 \cdot 0+1 \cdot 4=4.$$  

From our point of view, we have $$P_4(E,2)^T = \Quot^{-2} E = \PP \Hom( \O(-2), E) \cong \PP^3.$$ The universal sequence $$0 \to S \to \pi_2^* E \to Q \to 0$$ on $\Quot^{-2} E \times \PP^1$ is identified with the sequence $$0 \to \O(-1,-2) \to \O(0,-1)^{\oplus 2} \to Q \to 0$$ on $\PP^3 \times \PP^1$.  The quotient sheaf $Q$ restricts to $\O$ on $\{ P \} \times \PP^1$ for a generic $P \in \PP^3$, but $Q$ restricts to the direct sum of $\O(-1)$ and the structure sheaf of a point when $P$ is in the stratum $$\Quot^{-1} E \times \Sym^1 \PP^1 \hookrightarrow \Quot^{-2} E $$ discussed in Section~\ref{section:quotschemes}.  Of course, $\Quot^{-1} E = \PP \Hom( \O(-1),E) \cong \PP^1$ and this stratum is exactly the quadric $\PP^1 \times \PP^1 \hookrightarrow \PP^3$ along which the full stable pairs space has a $Z \times \AA^2$ singularity.  We can compute the PT invariant explicity using the method explained at the beginning of this section: \begin{eqnarray*} P_{4,2}(-2) & = & \int_{[P_4(E,2)^T]^{\rm vir}} e_T(-N^{\rm vir}) \\ & = & \int_{[\Quot^{-2} E]^{\rm vir}} e_T(-N^{\rm vir}) \\ & = & \int_{\PP^3} e_T(p_!(S^\lor \land^2 E - 2E + S + \land^2 E - SE)) \\ & = & \int_{\PP^3} \frac{e_T(p_* S^\lor \land^2 E)e_T(R^1 p_* SE)}{e_T(R^1 p_*S)e_T(R^1p_* \land^2 E)} \\ & = & \int_{\PP^3} \frac{e_T( \O(1)_t)e_T(\O(-1)_{2t})^4}{e_T(\O(-1)_t) e_T(\O_{2t})} \\ & = & \int_{\PP^3} \frac{(a+t)(2t-a)^4}{(t-a)(2t)} \\ & = & \int_{\PP^3} 4a^3 - 4a^2t + 8 t^3 \\ & = & 4. \end{eqnarray*}

According to \eqref{ZPT}, the generating function for degree $2$ stable pairs invariants of local $\PP^1$ is \begin{eqnarray*} Z_2^{\rm PT}(-2) &=& \sum_m P_{m,2}(\O(-1)^{\oplus 2},2)q^m \\ &=& -2q^3(1+q)^{-4}(1-q)^{-2} \\ &=& -2q^3(1-(-q)^2)^{-2}(1-q^2)^{-2} \\ & = & -2q^3 \left( \sum_{d=0}^{\infty} (d+1)(-q)^d \right ) \left ( \sum_{e=0}^{\infty} (e+1)(-q)^{2e} \right )  \\ & = & 2 \sum_{m=3}^{\infty} \; \sum_{d+2e=m-3} (d+1)(e+1)(-q)^m \\ &=& -2q^3+4q^4-10q^5+16q^6-28q^7+\cdots \end{eqnarray*}

Our computations yield \begin{eqnarray*} Z_2^{\rm PT}(-2) &=& C(-2,-1,0)q^3 + C(-2,-2,0)q^4 \\ & & + (C(-2,-3,-0)+C(-2,-1,-1))q^5 \\ & & +(C(-2,-4,0)+C(-2,-2,1))q^6  \\ & & +(C(-2,-5,0)+C(-2,-3,1)+C(-2,-1,2))q^7 +\cdots \\ & = & -2q^3+4q^4+(18-28)q^5+(424-408)q^6 +(7750-8404+626)q^7 + \cdots \end{eqnarray*} in complete agreement.

\end{document}